\documentclass[10pt, a4paper]{article}
\usepackage[numbers]{natbib}
\usepackage{amsmath,amssymb,amsfonts,setspace}
\usepackage[amsthm,thmmarks]{ntheorem}
\usepackage{mathrsfs}
\usepackage[colorlinks=true]{hyperref}
\usepackage{xy}

{\theoremstyle{change}
\newtheorem{proposition}{Proposition}[section]
\newtheorem{lemma}[proposition]{Lemma}
\newtheorem{theorem}[proposition]{Theorem}
\newtheorem{corollary}[proposition]{Corollary}
}

\newcommand{\deph}{\textbf}

\newcommand{\imag}{\text{\upshape i}}
\renewcommand{\phi}{\varphi}
\renewcommand{\epsilon}{\varepsilon}
\renewcommand{\theta}{\vartheta}

\newcommand{\lh}{{\ell_{\mathrm{H}}}}
\newcommand{\lc}{{\ell_{\mathrm{c}}}}
\newcommand{\lj}{{\ell_{\mathrm{J}}}}
\newcommand{\lr}{{\ell_{\mathrm{r}}}}
\newcommand{\lt}{\lambda}

\newcommand{\set}[2]{\left\{ #1 \,\middle|\, #2 \right\}}
\newcommand{\alev}[1]{\,\,\left[#1\right]}

\newcommand{\bs}{\boldsymbol}

\newcommand{\operp}{\mathbin{\scriptstyle \perp\mkern-15.7mu\bigcirc}}
\renewcommand{\oplus}{\mathbin{\scriptstyle +\mkern-15.5mu\bigcirc}}
\renewcommand{\otimes}{\mathbin{\scriptstyle \times\mkern-15.5mu\bigcirc}}

\DeclareMathOperator{\GL}{GL}
\DeclareMathOperator{\SL}{SL}

\DeclareMathOperator{\GI}{GI}
\DeclareMathOperator{\SI}{SI}

\DeclareMathOperator{\Sp}{Sp}
\DeclareMathOperator{\GU}{GU}
\DeclareMathOperator{\GO}{GO}
\DeclareMathOperator{\U}{U}

\DeclareMathOperator{\SU}{SU}
\DeclareMathOperator{\SO}{SO}

\DeclareMathOperator{\PSL}{PSL}
\DeclareMathOperator{\PSp}{PSp}
\DeclareMathOperator{\PSU}{PSU}

\DeclareMathOperator{\diag}{diag}
\DeclareMathOperator{\rank}{rk}
\DeclareMathOperator{\id}{id}
\DeclareMathOperator{\rad}{rad}
\DeclareMathOperator{\sgn}{sgn}
\DeclareMathOperator{\chr}{char}

\begin{document}
\title{On the lattice of normal subgroups in ultraproducts of compact simple groups}
\author{Abel Stolz \and Andreas Thom}
\date{February 5, 2013}

\maketitle

\begin{abstract}We prove that the lattice of normal subgroups of ultraproducts of compact simple non-abelian groups is distributive. In the case of ultraproducts of finite simple groups or compact connected simple Lie groups of bounded rank the set of normal subgroups is shown to be linearly ordered by inclusion.

{\small 2010 Mathematics Subject Classification: 03C20, 20D30 (primary), 20F69, 20D06, 22E46 (secondary)}
\end{abstract}

\tableofcontents

\section{Introduction}

This article is about the structure of the lattice of normal subgroups of an ultraproduct of compact simple groups. Note that the Peter-Weyl theorem implies that any compact simple group is either finite simple or a finite-dimensional connected compact simple Lie group; both cases admit a complete classification which is the basis of our considerations.

The motivation to study ultraproducts of groups is manifold. First of all, \emph{qualitative} properties of the ultraproduct reflect \emph{quantitative} properties of the groups involved. This becomes interesting since the manipulation of qualitative properties is sometimes easier and can be done with geometric or algebraic insight which might not be available in the quantitative computations. Let us give an example: We will later show that the set of normal subgroups of an ultraproduct of non-abelian finite simple groups is linearly ordered. Equivalently, there exists a natural number $k$, such that for each non-abelian finite simple group $G$ and each $g,h \in G$ either $g \in (C(h) \cup C(h^{-1}))^k$ or $h \in (C(g) \cup C(g^{-1}))^k$, where $C(s)$ denotes the conjugacy class of some element $s \in G$. While the qualitative statement sounds natural, the quantitative statement looks a bit more surprising at first. These statements can be proved only using the classification of finite simple groups.

Another source of of motivation for the study of normal subgroups of ultraproducts is the recent interest in sofic groups which has lead to the consideration of metric ultraproducts of symmetric groups. The connection between the two topics is provided by a theorem of Elek and Szab\'o \cite{elekszabo05}, asserting that a countable group is sofic if and only if embeds into a metric ultraproduct of symmetric groups. A \emph{metric} ultraproduct of groups is the quotient of an ultraproduct $\bs{G}$ by a normal subgroup $\bs{N}$, arising as the set of all elements infinitesimally close to the identity. In this context distance is measured in the Hamming distance on permutation groups. In constrast to the theory of sofic groups, where the above subgroup $\bs{N}$ is neglected, Ellis et~al.~\cite{ellisetal08} investigated this very normal subgroup, starting with the observation that it is maximal and hence $\bs{G}/\bs{N}$ simple. In fact they were able to show that the normal subgroups of $\bs{G}$ are linearly ordered by inclusion. Thus naturally the problem arose whether this theorem would generalize to ultraproducts of other (possibly all) non-abelian finite simple groups. We answer this question in the positive. The main source of knowledge used in the proof is Liebeck and Shalev's deep investigation of the size of conjugacy classes in finite simple groups \cite{liebeckshalev01}.
 Having thus dealt with all finite simple groups, where one can hope a priori for a positive answer, we take one more step to compact simple groups. In this setting an analogous theorem still holds true, under the somewhat restricting assumption of imposing a bound on the rank of groups contributing to the ultraproduct in question. When the bound on the rank is dropped the lattice of normal subgroups is no longer linearly ordered but we can still show that it is distributive, and in fact not very complicated. The method of proof takes its inspiration from seminal work of Nikolov and Segal \cite{nikolovsegal12}.

\paragraph*{}
The article is organized as follows. Section~\ref{sec:LengthFunctions} introduces notation and basic notions of metrics on permutation groups and matrix groups. Some effort is dedicated to the study of the connection of the Hamming distance to the size of conjugacy classes in symmetric groups. Although these results are not used in the sequel, they elucidate the above mentioned theorem of Ellis et~al. In fact one could easily reprove the theorem combining the results in Section~\ref{sec:LengthFunctions} and Section~\ref{sec:UltraproductsOfFiniteSimpleGroups}.

Section~\ref{sec:UltraproductsOfFiniteSimpleGroups} starts with some geometric prerequisites and culminates in Theorem~\ref{thm:NormalSubgroupsOfUltraproductsOfFiniteSimpleGroups}, validating the claim that the set of normal subgroups of ultraproducts of non-abelian finite simple groups is linearly ordered.

Section~\ref{sec:UltraproductsOfCompactConnectedSimpleLieGroups} consists mainly of an investigation of the inner structure of compact connected simple Lie groups. Then Theorem~\ref{thm:NormalSubgroupsOfUltraproductsOfLieGroups} for ultraproducts of compact connected simple Lie groups with bounded rank is deduced. Considering Lie groups of unbounded rank leads to Theorem~\ref{thm:DistributiveLatticeOfNormalSubgroupsOfUltraproducts}, asserting that the lattice of normal subgroups of ultraproducts of these is distributive.

In the concluding section the obtained results are bundled into Main Theorem~\ref{thm:MainTheorem}.

\paragraph*{}
The reader is assumed to be familiar with ultraproducts and ultralimits. Elementary properties of ultrafilters will be used without further notice. For a comprehensive introduction to (metric) ultraproducts and related notions confer \cite{benyaacovetal08}. We use more facts concerning finite simple groups of Lie type and Lie groups, respectively, than we are willing to introduce thoroughly. One may use the textbooks cited below or some standard reference of one's own choice, to verify missing links. Note that -- from now on -- when talking about finite simple groups we always mean finite simple \emph{non-abelian} groups.

\section{Length functions}\label{sec:LengthFunctions}
The study of groups is enriched when we introduce a compatible metric or topology. Metric or topological groups form a well understood subject of study. In this section we will introduce the notion of pseudo length functions, which in fact is just a reformulation of the notion of pseudometrics. We shall further give examples of pseudo length functions in some, mostly finite, groups and examine how different pseudo length functions can be compared in large groups.

We denote the set $\{1,\ldots,n\}$ of natural numbers by $[n]$.  In a group $G$ we write $g^G$ for the conjugacy class of an element $g$. The group generated by $g$ is $\left<g\right>$, the group generated by a subset $S\subset G$ is $\left<S\right>$, and consequently the normal subgroup generated by $g$ is $\left<g^G\right>$. When the group in which conjugation takes place is understood, we write $C(g)$ for the conjugacy class of $g$ and $N(g)$ for the normal closure $\left<g^G\right>$ of $g$. 
We call $S$ normal if it is the union of conjugacy classes and non-trivial if it contains a non-identity element.

Let $G$ be a group. A function $\ell:G\rightarrow [0,\infty[$ is called a \deph{length function} on $G$ if for all $g,h\in G$
\begin{itemize}
 \item[LF1] $\ell(g)=0$ if and only if $g=1$,
 \item[LF2] $\ell(g)=\ell(g^{-1})$,
 \item[LF3] $\ell(gh)\leq \ell(g)+\ell(h)$.
\end{itemize}
If moreover $\ell(hgh^{-1})=\ell(g)$ holds, then $\ell$ is \deph{invariant}. If the first axiom is weakened to $\ell(1)=0$, then $\ell$ is only a \deph{pseudo length function}.

It is an easy observation that every (pseudo) length function corresponds to a right-invariant (pseudo) metric on $G$ and vice versa by $d(g,h)=\ell(gh^{-1})$ and $\ell(g)=d(g,1)$. The notion of invariance for (pseudo) length functions translates into left-invariance of the corresponding (pseudo) metric. We say that a group $G$ has \deph{diameter} $D(G)$ with respect to $\ell$ if $\sup_{g\in G}\ell(g)=D(G)$. This notion coincides with the diameter of metric spaces.

It will turn out to be necessary to study the asymptotics of sequences of pseudo length functions on groups of growing size. Let $\mathcal{G}=\set{G_n}{n\in \mathbb{N}}$ be a countably infinite family of groups with generic length functions $\ell_1$ and $\ell_2$ defined for every $G\in \mathcal{G}$. We call $\ell_1$ \deph{asymptotically bounded} by $\ell_2$ if there are constants $c$ and $N$ such that for every $n\geq N$ and every choice of elements $g\in G_n$ we have $\ell_1(g)\leq c \ell_2(g).$
The constant $c$ is called a \deph{modulus} of asymptotic boundedness. The function $\ell_1$ is \deph{locally asymptotically bounded} by $\ell_2$ in \deph{radius} $\delta$, if the same holds for all $g \in G_n$ satisfying $\ell_1(g)<\delta$, for some $\delta>0$ not depending on $n$. We call $\ell_1$ and $\ell_2$ (\deph{locally}) \deph{asymptotically equivalent} if $\ell_1$ and $\ell_2$ are (locally) asymptotically bounded with respect to each other.

We are interested in the interaction of pseudo length functions and quotient groups. The following two lemmas introduce the natural definitions.

\begin{lemma}
Let $G$ be a finite group with a normal subgroup $H$ and an invariant (pseudo) length function $\ell$. Then 
\[\ell_{G/H}(gH):=\inf_{h\in H}\ell(gh)\]
defines an invariant (pseudo) length function on $G/H$.
\end{lemma}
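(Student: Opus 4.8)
The plan is to verify the three length-function axioms (LF1--LF3, in their pseudo versions) directly for the formula $\ell_{G/H}(gH) = \inf_{h \in H} \ell(gh)$, together with invariance, and the only subtle points are showing the quantities are well defined and checking the triangle inequality. First I would note that the definition does not depend on the choice of coset representative: if $gH = g'H$ then $g' = gh_0$ for some $h_0 \in H$, and as $h$ ranges over $H$ so does $h_0 h$, hence $\inf_{h \in H} \ell(g'h) = \inf_{h \in H} \ell(g h_0 h) = \inf_{h' \in H} \ell(g h')$. Since $\ell$ takes values in $[0,\infty[$ the infimum exists; moreover $G$ is finite so $H$ is finite and the infimum is actually a minimum, which makes all the estimates below equalities achieved at specific elements.

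For LF1 (pseudo version, i.e.\ $\ell_{G/H}(1\cdot H) = 0$): taking $h = 1$ gives $\ell_{G/H}(H) \le \ell(1) = 0$, and non-negativity gives the reverse, so $\ell_{G/H}(H) = 0$. (If one wants the genuine LF1 one observes that $\ell_{G/H}(gH) = 0$ forces $\ell(gh) = 0$ for some $h$, hence $gh = 1$, i.e.\ $g \in H$; but the lemma only claims to send pseudo to pseudo and invariant to invariant, so this is a bonus.) For LF2: since inversion $h \mapsto h^{-1}$ is a bijection of $H$ and $\ell(gh) = \ell((gh)^{-1}) = \ell(h^{-1}g^{-1})$, using invariance of $\ell$ to rewrite $\ell(h^{-1}g^{-1}) = \ell(g^{-1} h^{-1} \cdot h \cdot h^{-1})$ — more cleanly, $\ell(h^{-1}g^{-1})$ equals $\ell(g^{-1}h^{-1})$ up to conjugation, and as $h^{-1}$ ranges over $H$ we get $\ell_{G/H}(gH) = \inf_h \ell(h^{-1}g^{-1}) = \inf_{h'} \ell(g^{-1}h') = \ell_{G/H}(g^{-1}H)$, which is $\ell_{G/H}((gH)^{-1})$.

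The main step is LF3. Given cosets $gH$ and $g'H$, pick (by finiteness) $h, h' \in H$ with $\ell_{G/H}(gH) = \ell(gh)$ and $\ell_{G/H}(g'H) = \ell(g'h')$. I want to bound $\ell_{G/H}(gg'H) = \inf_{h'' \in H} \ell(gg'h'')$. The natural product $gh \cdot g'h'$ lies in $gg'H$ precisely because $H$ is normal: $gh g' h' = g g' (g'^{-1} h g') h' \in gg'H$. Hence $\ell_{G/H}(gg'H) \le \ell(ghg'h') \le \ell(gh) + \ell(g'h') = \ell_{G/H}(gH) + \ell_{G/H}(g'H)$, using LF3 for $\ell$. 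Finally, invariance of $\ell_{G/H}$: for any $g, k \in G$, normality of $H$ gives $k(gh)k^{-1} \in kgk^{-1}H$ and as $h$ ranges over $H$ the element $kgk^{-1}(k h k^{-1})$ ranges over a coset, so $\ell_{G/H}(kgk^{-1}H) = \inf_h \ell(kgh k^{-1}) = \inf_h \ell(gh) = \ell_{G/H}(gH)$, where the middle equality is invariance of $\ell$. I do not expect any real obstacle here; the one place to be careful is to invoke normality of $H$ in LF3 and in the invariance statement (it is genuinely used there, not just for the coset structure), and to remember that $\ell$ is assumed invariant — this is what makes LF2 and the coset-independence of the symmetric estimates go through cleanly. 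Finiteness of $G$ is a convenience (infimum attained) but is not strictly needed for the inequalities; it does ensure $\ell_{G/H}$ is finite-valued.
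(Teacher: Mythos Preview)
Your proof is correct and follows the same approach as the paper. The paper's own proof only writes out the triangle inequality, using exactly your argument: pick minimizers $k,l\in H$ for $\ell(gk)$ and $\ell(hl)$ and observe that $gkhl\in ghH$ by normality, so $\ell_{G/H}(ghH)\le\ell(gkhl)\le\ell(gk)+\ell(hl)$. Your treatment of LF2 is slightly roundabout (you invoke invariance where normality alone already gives $h^{-1}g^{-1}=g^{-1}(gh^{-1}g^{-1})\in g^{-1}H$), but the conclusion is fine and the extra verifications you supply are all sound.
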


\begin{proof}
We only show the triangle inequality. Let $g,h$ be in $G$ and $k,l$ in $H$ such that $\ell(gk)$ and $\ell(hl)$ are minimal. Then
\[\ell_{G/H}(ghH)\leq \ell(gkhl)\leq \ell(gk)+\ell(hl)=\ell_{G/H}(gH)+\ell_{G/H}(hH).\]
\end{proof}

The proof of the following statement is obvious.

\begin{lemma}
 Let $G$ be a group with normal subgroup $H\neq \{1\}$ and $\ell$ a (pseudo) length function on $G/H$. Then
\[\ell^G(g):=\ell(gH)\]
defines a pseudo length function on $G$. If $\ell$ is invariant, then $\ell^G$ is invariant, too.
\end{lemma}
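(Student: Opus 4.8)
The plan is to note that $\ell^G$ is simply the pullback $\ell \circ \pi$ of $\ell$ along the canonical projection $\pi\colon G \to G/H$, and then to transfer each of the axioms LF2 and LF3 (and the weakened form of LF1) across the group homomorphism $\pi$. Unlike in the preceding lemma, no infimum or minimisation is involved, so there is nothing to check for well-definedness; everything reduces to the fact that the stated properties of a (pseudo) length function are stable under precomposition with a group homomorphism.

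First I would treat the weakened first axiom: $\ell^G(1) = \ell(1\cdot H) = \ell(H) = 0$, since $H$ is the identity element of $G/H$. This is all that can be said, and it explains why $\ell^G$ is in general only a \emph{pseudo} length function: as $H \neq \{1\}$, any $h \in H \setminus \{1\}$ satisfies $\ell^G(h) = \ell(H) = 0$ despite $h \neq 1$. Next, LF2 for $\ell^G$ follows from LF2 for $\ell$ together with $(gH)^{-1} = g^{-1}H$, namely
\[\ell^G(g^{-1}) = \ell\bigl((gH)^{-1}\bigr) = \ell(gH) = \ell^G(g).\]
For LF3 I would use that $\pi$ is a homomorphism, so that $(gh)H = (gH)(hH)$, whence
\[\ell^G(gh) = \ell\bigl((gH)(hH)\bigr) \leq \ell(gH) + \ell(hH) = \ell^G(g) + \ell^G(h).\]

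Finally, if $\ell$ is invariant, the analogous computation with conjugation gives $\ell^G(hgh^{-1}) = \ell\bigl((hH)(gH)(hH)^{-1}\bigr) = \ell(gH) = \ell^G(g)$, so $\ell^G$ is invariant as well. The main point worth flagging — and the reason the statement is labelled obvious — is precisely that there is no obstacle: every defining property passes through $\pi$ verbatim, the only genuine content being the observation above that the hypothesis $H \neq \{1\}$ forces the result to be merely a pseudo length function.
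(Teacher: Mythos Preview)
Your proof is correct and matches the paper's approach: the paper simply declares the statement obvious and gives no further argument, and your verification is exactly the routine check one would perform to justify that claim. Your additional remark explaining why $H\neq\{1\}$ forces $\ell^G$ to be genuinely pseudo is a nice clarification the paper leaves implicit.
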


\subsection{The conjugacy length}
An example of a pseudo length function that can be defined on any finite group $G$ is the \deph{conjugacy length}
\[\lc(g):=\frac{\log|C(g)|}{\log|G|}.\]

\begin{proposition}
Let $G$ be a finite group. Then the function $\lc$ is an invariant pseudo length function on $G$.
\end{proposition}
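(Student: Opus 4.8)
The plan is to verify the three axioms (with LF1 in its weakened form $\ell(1)=0$) together with invariance, the only substantial point being the triangle inequality. Throughout I assume $|G|>1$, so that the denominator $\log|G|$ is strictly positive; then $\lc$ takes values in $[0,\infty)$ because $1\le|C(g)|\le|G|$ for every $g\in G$.

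Two of the requirements are immediate. Since $C(1)=\{1\}$ we get $\lc(1)=\log 1/\log|G|=0$, and since $C(hgh^{-1})=C(g)$ we get $\lc(hgh^{-1})=\lc(g)$, establishing invariance. For LF2, the inversion map $x\mapsto x^{-1}$ restricts to a bijection between $C(g)$ and $C(g^{-1})$: if $x=kgk^{-1}$ then $x^{-1}=kg^{-1}k^{-1}$, and conversely. Hence $|C(g)|=|C(g^{-1})|$ and therefore $\lc(g)=\lc(g^{-1})$.

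The heart of the matter is LF3. After exponentiating and multiplying by the positive number $\log|G|$, the inequality $\lc(gh)\le\lc(g)+\lc(h)$ becomes the multiplicative estimate $|C(gh)|\le|C(g)|\cdot|C(h)|$. I would prove this by examining the product set $C(g)C(h)=\{xy\mid x\in C(g),\,y\in C(h)\}$. On one hand this set is normal, i.e. a union of conjugacy classes, because $k(xy)k^{-1}=(kxk^{-1})(kyk^{-1})$, and it contains the element $gh$ (take $x=g$, $y=h$); since a normal set containing $gh$ must contain all of its conjugates, we obtain $C(gh)\subseteq C(g)C(h)$. On the other hand, $C(g)C(h)$ is the image of $C(g)\times C(h)$ under the multiplication map of $G$, so $|C(g)C(h)|\le|C(g)|\cdot|C(h)|$. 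Chaining these gives $|C(gh)|\le|C(g)|\cdot|C(h)|$, and taking $\log$ and dividing by $\log|G|$ yields the triangle inequality.

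I do not expect a genuine obstacle; the single point requiring a moment's thought is the observation that $C(g)C(h)$, being conjugation-invariant and containing the one element $gh$, is forced to contain the whole class $C(gh)$. Everything else is bookkeeping with cardinalities and the monotonicity of $\log$.
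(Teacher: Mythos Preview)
Your proof is correct and follows exactly the approach sketched in the paper: the key step is the containment $C(gh)\subset C(g)C(h)$, from which the triangle inequality follows by taking cardinalities and logarithms, while the remaining axioms are immediate from elementary properties of conjugacy classes.
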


\begin{proof}
The claim follows from elementary properties of conjugacy classes. For example, $C(gh) \subset C(g)C(h)$ implies the triangle inequality.
\end{proof}

Note that $\lc$ is a length function if and only if $G$ has trivial center, in particular if $G$ is non-abelian and simple. More explicit is the following proposition.

\begin{proposition}\label{prop:ConjugacyLengthModuloCenter}
 Let $G$ be a finite group. Then
\[\lc(g)=\lc_{G/Z(G)}(gZ(G))\]
holds for all $g\in G$.
\end{proposition}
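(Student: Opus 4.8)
The plan is to reduce the identity to the elementary fact that the size of a conjugacy class is unchanged under multiplication by a central element. Since $\lc$ is an invariant pseudo length function (by the proposition above) and $Z(G)$ is a normal subgroup of the finite group $G$, the first lemma applies and yields
\[
\lc_{G/Z(G)}(gZ(G)) = \inf_{z\in Z(G)}\lc(gz).
\]
So it is enough to prove that $\lc(gz)=\lc(g)$ for every $z\in Z(G)$.

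To see this, fix $z\in Z(G)$. For every $h\in G$ we have $h(gz)h^{-1}=(hgh^{-1})z$ because $z$ is central, so the conjugacy class of $gz$ is $C(gz)=\{\,xz \mid x\in C(g)\,\}$, i.e. the image of $C(g)$ under right translation by $z$. This translation is a bijection of $G$, hence $|C(gz)|=|C(g)|$, and dividing by $\log|G|$ gives $\lc(gz)=\lc(g)$. Combining this with the previous display, $\lc_{G/Z(G)}(gZ(G))=\inf_{z\in Z(G)}\lc(gz)=\lc(g)$, as claimed.

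I expect no real obstacle; the statement is essentially a bookkeeping exercise once one unwinds the definition of the induced length function. The only point that needs a little care is to interpret $\lc_{G/Z(G)}$ as the length function induced on $G/Z(G)$ via the first lemma — that is, as $\inf_{z\in Z(G)}\lc(gz)$ — and not as the intrinsic conjugacy length of the group $G/Z(G)$, whose different normalization (by $\log|G/Z(G)|$ in place of $\log|G|$) would in general produce a different value. One may also note in passing that the identity exhibits $\lc$ as the pull-back, in the sense of the second lemma, of a genuine length function on the centerless group $G/Z(G)$, which refines the earlier remark that $\lc$ is a length function exactly when $Z(G)$ is trivial.
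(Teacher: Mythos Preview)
Your proof is correct and follows exactly the same route as the paper: unwind $\lc_{G/Z(G)}(gZ(G))$ via Lemma~2.1 as $\inf_{z\in Z(G)}\lc(gz)$, then use $|C(gz)|=|C(g)|$ for central $z$. Your closing remark about the interpretation of $\lc_{G/Z(G)}$ (induced length function with normalization by $\log|G|$, not the intrinsic conjugacy length of the quotient) is a worthwhile clarification that the paper leaves implicit.
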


\begin{proof}
It is not hard to observe that $|C(gz)|=|C(g)|$ for any central element $z$, which proves
\[\lc_{G/Z(G)}(gZ(G))=\inf_{z\in Z(G)}\lc(gz)= \inf_{z\in Z(G)}\lc(g)= \lc(g).\]
\end{proof}

The following lemma is obvious.

\begin{lemma}
 Let $G$ be a finite group. Then for all $g\in G$ and $n\in\mathbb{N}$ the estimate
\[\lc(g^n)\leq \lc(g)\]
holds.
\end{lemma}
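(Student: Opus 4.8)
The weak bound $\lc(g^n)\leq n\,\lc(g)$ is already immediate from the triangle inequality (LF3) established above for $\lc$, but the asserted estimate is sharper, so a little more is needed. The plan is to pass through the orbit--stabilizer relation. Write $C_G(g)=\set{x\in G}{xg=gx}$ for the centralizer of $g$. Under the conjugation action of $G$ on itself the orbit of $g$ is $C(g)$ and its stabilizer is $C_G(g)$, so $|C(g)|=[G:C_G(g)]$, and likewise $|C(g^n)|=[G:C_G(g^n)]$.

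The key step is the inclusion $C_G(g)\subseteq C_G(g^n)$: any element commuting with $g$ commutes with every power of $g$. Consequently $[G:C_G(g^n)]\leq[G:C_G(g)]$, that is, $|C(g^n)|\leq|C(g)|$.

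To conclude, note that for the trivial group the statement holds vacuously, while for $|G|>1$ we have $\log|G|>0$; since $t\mapsto\log t$ is increasing on $[1,\infty[$, dividing the inequality $\log|C(g^n)|\leq\log|C(g)|$ by $\log|G|$ yields $\lc(g^n)\leq\lc(g)$. I do not expect any genuine obstacle here: the only point deserving a word is that the normalizing constant $\log|G|$ is positive, which is why one first disposes of the case $|G|=1$.
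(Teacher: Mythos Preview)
Your argument is correct: the inclusion $C_G(g)\subseteq C_G(g^n)$ together with orbit--stabilizer gives $|C(g^n)|\leq |C(g)|$, and dividing by $\log|G|$ yields the claim. The paper itself offers no proof beyond calling the lemma obvious, and your centralizer argument is precisely the standard justification (an equivalent one-liner is that conjugation by $h$ sends $g^n$ to $(hgh^{-1})^n$, so $x\mapsto x^n$ maps $C(g)$ onto $C(g^n)$).
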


The conjugacy length is very useful, because it is directly related to algebraic properties of the group. We will make heavy use of results of Liebeck-Shalev. They showed in \cite{liebeckshalev01} that in any non-abelian simple group $G$, a conjugacy class of some element generates $G$ essentially as quickly as the conjugacy length permits. More precisely, Liebeck-Shalev showed that there is a constant $k$, such that $C(g)^{[k/ \lc(g)]} = G$ for all non-abelian finite simple groups $G$ and all $g \in G$. On the other side it is clear that at least $D(G)/\lc(g)$ products are necessary, and $D(G)$ is bounded below by a positive constant.
Hence, the result of Liebeck-Shalev is best possible. One drawback is that the conjugacy length is not directly related to geometry and sometimes hard to compute.
We will proceed by giving some examples of length functions on classes of groups from everyday life and show that for each finite simple group the conjugacy length can be replaced by more familiar invariant length functions related to geometry.

\subsection{Length functions on permutation groups}
We denote the class of all symmetric groups (i.e. full permutation groups of finite sets) by $\mathcal{S}$ and the class of alternating groups by $\mathcal{A}$.

\begin{proposition}\label{prop:RankLengthOnSn}
Let $\pi$ be a permutation in $S_n$ with $l$ cycles. Then
\[\lr(\pi):=1-\frac{l}{n}\]
defines an invariant length function on $S_n$.
\end{proposition}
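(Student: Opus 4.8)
The plan is to verify the three length-function axioms LF1–LF3 directly from the cycle structure, and then check invariance separately. Write $c(\pi)$ for the number of cycles of $\pi \in S_n$, counting fixed points as $1$-cycles, so that $\lr(\pi) = 1 - c(\pi)/n$ and the support of $\pi$ has size $n - (\text{number of fixed points})$. A cleaner way to think about $\lr$ is that $n\,\lr(\pi) = n - c(\pi)$ equals the minimal number of transpositions needed to write $\pi$, equivalently the Cayley graph distance from $\pi$ to the identity with respect to the generating set of all transpositions; I would state this identification early, since once it is in place all three axioms become statements about a word metric on a generating set closed under inversion and conjugation.

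First I would record $c(\pi) \le n$ with equality if and only if $\pi = 1$, which gives $\lr(\pi) \ge 0$ and establishes LF1. Second, $c(\pi^{-1}) = c(\pi)$ because inverting a permutation inverts each cycle without changing the cycle type, so $\lr(\pi) = \lr(\pi^{-1})$, giving LF2. For LF3, the key combinatorial fact is that multiplying a permutation by a single transposition changes its number of cycles by exactly $\pm 1$ (it either merges two cycles or splits one), so $n\,\lr(\sigma) = n - c(\sigma)$ is precisely the minimal length of $\sigma$ as a product of transpositions. Given this, if $\pi = t_1\cdots t_r$ and $\rho = s_1\cdots s_m$ with $r = n\,\lr(\pi)$ and $m = n\,\lr(\rho)$, then $\pi\rho$ is a product of $r + m$ transpositions, whence $n\,\lr(\pi\rho) \le r + m = n\,\lr(\pi) + n\,\lr(\rho)$, i.e. the triangle inequality. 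Invariance is immediate since a conjugate $h\pi h^{-1}$ has the same cycle type as $\pi$, hence the same number of cycles, so $\lr(h\pi h^{-1}) = \lr(\pi)$; thus $\lr$ is in fact an invariant length function.

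I do not expect a serious obstacle here: the only point requiring a genuine (though standard) argument is the claim that right-multiplication by a transposition changes the cycle count by exactly one, which I would prove by the usual case split according to whether the two transposed points lie in the same cycle of $\sigma$ or in different cycles. Everything else is bookkeeping with cycle types. If one prefers to avoid the word-metric reformulation, LF3 can alternatively be obtained directly by bounding $c(\pi\rho)$ below in terms of $c(\pi)$ and $c(\rho)$ via the supports, but the transposition-length description is the most transparent and also makes the later comparison results with other length functions on $\mathcal{S}$ and $\mathcal{A}$ easier to state.
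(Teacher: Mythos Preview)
Your argument is correct, but it is not the route the paper takes. The paper postpones the proof precisely because it wants to deduce Proposition~\ref{prop:RankLengthOnSn} as a corollary of the linear-algebra version: one embeds $S_n$ into $\GL_n$ via permutation matrices, checks that for a permutation $\pi$ with cycles of lengths $k_1,\ldots,k_l$ the permutation matrix $P_\pi = P_{\pi_1}\oplus\cdots\oplus P_{\pi_l}$ satisfies $\rank(1-P_{\pi_i})=k_i-1$, hence $\rank(1-P_\pi)=n-l$, and concludes that $\lr$ on $S_n$ is just the restriction of the already-established invariant length function $\lr(g)=n^{-1}\rank(1-g)$ on $\GL(V)$. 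Your proof instead identifies $n\,\lr(\pi)=n-c(\pi)$ with the word length in the Cayley graph generated by all transpositions and reads off LF1--LF3 and invariance from general properties of word metrics over symmetric conjugation-invariant generating sets. Your approach is elementary and self-contained; the paper's approach is shorter once the $\GL(V)$ proposition is in hand and, more importantly, explains why the same symbol $\lr$ is used for both permutations and matrices, a unification the paper exploits later when comparing length functions across classes of finite simple groups.
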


We postpone the proof to Subsection~\ref{ssec:LengthFunctionsOnLinearGroups} and look at another example.

The \deph{Hamming length} of a permutation $\pi\in S_n$ is defined as 
\[\lh(\pi):=1-\frac{|\set{i\in [n]}{\pi(i) = i}|} {n} \]
It is well known that $\lh$ is an invariant length function on $S_n$.

The following proposition serves as an introductory example of asymptotic equivalence and will be useful later.

\begin{proposition}\label{prop:RankLengthAndHammingLengthOnSn}
The length functions $\lh$ and $\lr$ are asymptotically equivalent.
\end{proposition}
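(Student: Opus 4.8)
The plan is to bound each length function by a constant multiple of the other, for all sufficiently large $n$. One direction is immediate: a permutation $\pi \in S_n$ with $l$ cycles (counting fixed points as $1$-cycles) moves at most $n - f$ points, where $f$ is the number of fixed points, so $f$ is the number of cycles of length $1$ and in particular $f \leq l$. Hence $\lh(\pi) = 1 - f/n \geq 1 - l/n = \lr(\pi)$, which shows $\lr$ is asymptotically bounded by $\lh$ with modulus $1$ (in fact for every $n$, with no exceptional set).

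For the reverse direction I would estimate the number of cycles of $\pi$ in terms of the number of moved points. Let $m = n - f$ be the number of points moved by $\pi$. Each non-trivial cycle has length at least $2$, so the number of non-trivial cycles is at most $m/2$, and therefore $l \leq f + m/2 = n - m/2$. Thus $\lr(\pi) = 1 - l/n \geq m/(2n) = \tfrac12 \lh(\pi)$, i.e.\ $\lh(\pi) \leq 2\,\lr(\pi)$. This gives that $\lh$ is asymptotically bounded by $\lr$ with modulus $2$, again uniformly in $n$. Combining the two inequalities yields
\[\lr(\pi) \leq \lh(\pi) \leq 2\,\lr(\pi)\]
for all $\pi \in S_n$ and all $n$, so $\lh$ and $\lr$ are asymptotically equivalent.

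There is essentially no obstacle here: both bounds are elementary counting arguments, and in fact one obtains the stronger conclusion that the equivalence holds with explicit constants and with empty exceptional set, so the ``asymptotic'' qualifier and the dependence on $n$ are not really needed. The only minor point to keep straight is the bookkeeping convention that $l$ counts all cycles including fixed points, which is what makes the first inequality work cleanly; with that convention fixed, both estimates are one line each.
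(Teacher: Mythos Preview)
Your proof is correct and follows essentially the same approach as the paper: both directions use the elementary counting that fixed points are a subset of the cycles and that each non-trivial cycle has length at least $2$, yielding the identical bounds $\lr(\pi)\leq\lh(\pi)\leq 2\lr(\pi)$. The only difference is notational (the paper writes $m$ for the number of trivial cycles where you write $f$), and your manipulation in the second inequality is slightly more direct than the paper's.
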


\begin{proof}
Let $\pi\in S_n$ with $l$ cycles, $m$ of which are trivial. Then immediately
\[\lr(\pi)=\frac{n-l}{n}\leq \frac{n-m}{n}=\lh(\pi)\] follows. Because the remaining $l-m$ non-trivial cycles have length at least $2$, $l-m\leq \frac{1}{2}(n-m)$. We conclude
\[\frac{n-m}{n}=\frac{n-l+l-m}{n}\leq \frac{n-l}{n}+\frac{n-m}{2n}\]
and finally $\lh(\pi)\leq 2 \lr(\pi)$.
\end{proof}

We shall use the remainder of this paragraph to exhibit the connection of the Hamming length to the generic conjugacy length introduced above.

\begin{lemma}\label{lem:SizeOfConjugacyClassInSn}
 Let $\pi$ be a permutation in $S_n$. If the number of cycles of length $i$ is denoted by $c_i$ and the longest cycle has length $k$, then the cardinality of the conjugacy class of $\pi$ in $S_n$ is given by
\[|C(\pi)|=n!\left(\prod_{i=1}^k i^{c_i} \prod_{i=1}^k c_i!\right)^{-1}.\]
\end{lemma}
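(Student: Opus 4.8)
The plan is to compute $|C(\pi)|$ directly via the orbit-stabilizer theorem, writing $|C(\pi)| = |S_n| / |Z_{S_n}(\pi)| = n! / |Z_{S_n}(\pi)|$, so that the whole task reduces to identifying the centralizer of $\pi$ and counting its elements. The denominator $\prod_{i=1}^k i^{c_i} \prod_{i=1}^k c_i!$ should therefore turn out to be exactly $|Z_{S_n}(\pi)|$, and the proof is really a description of that centralizer.

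First I would fix the cycle type of $\pi$, recording that $\pi$ is a product of $c_i$ disjoint cycles of length $i$ for each $i$ from $1$ to $k$ (so $\sum_i i\, c_i = n$), and note that an element $\sigma$ commutes with $\pi$ if and only if conjugation by $\sigma$ permutes the cycles of $\pi$ among themselves, mapping each cycle to a cycle of the same length. I would then decompose the data of such a $\sigma$ into two independent choices: (i) for each cycle length $i$, a permutation of the $c_i$ cycles of length $i$ amongst themselves, contributing a factor $\prod_i c_i!$; and (ii) for each individual cycle, once its image cycle is chosen, a cyclic rotation aligning the labeling, contributing a factor of $i$ per cycle of length $i$, hence $\prod_i i^{c_i}$ in total. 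Multiplying these gives $|Z_{S_n}(\pi)| = \prod_{i=1}^k i^{c_i} \prod_{i=1}^k c_i!$, and substituting into the orbit-stabilizer formula yields the claimed expression.

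The main obstacle — really the only nontrivial point — is justifying precisely step (ii): that once $\sigma$ has been prescribed to send a given cycle $(a_1\, a_2\, \cdots\, a_i)$ of $\pi$ to a specified cycle of the same length, there are exactly $i$ choices of how $\sigma$ acts on $\{a_1,\ldots,a_i\}$ compatible with $\sigma \pi \sigma^{-1} = \pi$, and that these choices for different cycles are genuinely independent. I would argue this by observing that the conjugation relation forces $\sigma(a_{j+1}) = \pi(\sigma(a_j))$ (indices mod $i$), so $\sigma$ restricted to the cycle is completely determined by the single value $\sigma(a_1)$, which may be any of the $i$ points of the target cycle; and independence across cycles is immediate because the cycles of $\pi$ partition $[n]$. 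Everything else is bookkeeping, and I would keep it brief.
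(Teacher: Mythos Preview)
Your proposal is correct and is exactly the standard orbit--stabilizer/centralizer argument. The paper's own proof is a one-line citation (``elementary combinatorics, see \cite{wilson09}, Section~2.3.1''), so you have simply written out in full the argument the paper defers to the reference; there is no difference in approach.
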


\begin{proof}
The claim is elementary and follows by combinatorics as explained in \cite{wilson09}, Section~2.3.1.
\end{proof}

\begin{lemma}\label{lem:LengthFunctionComparisonNumberOneInSn}
The length function $\lc$ is asymptotically bounded by $\lh$ in $\mathcal{S}$.
\end{lemma}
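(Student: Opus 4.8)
The aim is to show that there exist constants $c$ and $N$ such that $\lc(\pi) \le c\,\lh(\pi)$ for all $\pi \in S_n$ with $n \ge N$. Writing $\lc(\pi) = \log|C(\pi)| / \log|S_n| = \log|C(\pi)| / \log(n!)$, and noting $\log(n!) = \Theta(n\log n)$, it suffices to bound $\log|C(\pi)|$ from above by $O\bigl(\lh(\pi)\, n \log n\bigr)$. So the real content is: \emph{the logarithm of the size of a conjugacy class is controlled, up to a constant and a factor $\log n$, by the number of non-fixed points}. I would obtain the size of $C(\pi)$ from Lemma~\ref{lem:SizeOfConjugacyClassInSn}: if $c_i$ denotes the number of $i$-cycles of $\pi$ and $m = c_1$ is the number of fixed points, then $|C(\pi)| = n! \bigl(\prod_{i\ge 1} i^{c_i} c_i!\bigr)^{-1}$, and in particular $|C(\pi)| \le n!/(c_1!) = n!/m!$.

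First I would handle the regime where $\pi$ moves a constant fraction of the points, say $\lh(\pi) \ge \tfrac12$: there $\lc(\pi) \le 1 \le 2\lh(\pi)$ trivially, so nothing is needed. The interesting case is $m$ close to $n$, i.e. $n - m$ small compared to $n$. Here I would use $|C(\pi)| \le n!/m!$ to get
\[
\log|C(\pi)| \;\le\; \log(n!) - \log(m!) \;=\; \sum_{j=m+1}^{n} \log j \;\le\; (n-m)\log n.
\]
Dividing by $\log(n!)$ gives $\lc(\pi) \le (n-m)\log n / \log(n!)$. Since $n - m = n\,\lh(\pi)$, this reads $\lc(\pi) \le \lh(\pi)\, \cdot\, n\log n / \log(n!)$. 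It remains to observe that $n\log n / \log(n!)$ is bounded: by Stirling, $\log(n!) \ge n\log n - n \ge \tfrac12 n \log n$ for $n$ large enough (concretely for $n \ge N$ with $N$ chosen so that $\log n \ge 2$, i.e. $n \ge 8$), so $n\log n/\log(n!) \le 2$. Hence $\lc(\pi) \le 2\,\lh(\pi)$ for all $\pi \in S_n$ with $n \ge 8$, which is the claim with modulus $c = 2$ and $N = 8$.

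The only mild subtlety — and the step I would expect to be the point requiring care — is the elementary estimate $\log(n!) \ge c' n \log n$: one must pick the threshold $N$ so that Stirling's bound (or the cruder $n! \ge (n/2)^{n/2}$, giving $\log(n!) \ge \tfrac{n}{2}(\log n - \log 2) \ge \tfrac14 n\log n$ once $\log n \ge 2\log 2$) kicks in with a clean constant; this only fixes the numerical values of $c$ and $N$ and poses no conceptual difficulty. Everything else is a direct consequence of Lemma~\ref{lem:SizeOfConjugacyClassInSn} together with the crude but sufficient bound $\prod_{i\ge 1} i^{c_i} c_i! \ge c_1! = m!$, obtained by discarding all factors except the one coming from the fixed points.
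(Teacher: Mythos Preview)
Your argument is correct and essentially identical to the paper's: both use the bound $|C(\pi)|\le n!/m!$ from Lemma~\ref{lem:SizeOfConjugacyClassInSn}, then $\log(n!/m!)\le(n-m)\log n$, together with the Stirling estimate $\log(n!)\ge\tfrac12 n\log n$ for large $n$, to obtain $\lc(\pi)\le 2\lh(\pi)$. The only cosmetic difference is that the paper does not split into the cases $\lh(\pi)\gtrless\tfrac12$; your main estimate already covers all $\pi$ uniformly, so that split is unnecessary.
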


\begin{proof}
We consider a non-trivial permutation $\pi\in S_n$ which has $m$ fixed points. Again we denote the number of cycles of length $i$ of $\pi$ by $c_i$. Then by assumption $c_1=m$. By Stirling's formula for large $n$ the estimate
\[\tfrac{1}{2}n\log n\leq\log n! \leq 2 n\log n\] holds.

Using Lemma~\ref{lem:SizeOfConjugacyClassInSn} we obtain the trivial inequality
\[|C(\pi)|\leq n!(m!)^{-1}.\]
Therefore
\[\lc(\pi) \leq \frac{\sum_{i=1}^n\log i-\sum_{i=1}^m \log i}{\frac{1}{2}n\log n}\leq 2 \frac{\sum_{i=m+1}^n\log n}{n\log n}=2\frac{n-m}{n}=2\lh(\pi).\]
\end{proof}

\begin{lemma}\label{lem:LengthFunctionComparisonNumberThreeInSn}
In $\mathcal{S}$ the length function $\lh$ is asymptotically bounded by $\lc$.
\end{lemma}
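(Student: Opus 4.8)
The plan is to bound $\lh$ from above by a constant multiple of $\lc$, i.e.\ to show that a permutation with few fixed points must have a large conjugacy class. Fix a non-trivial $\pi \in S_n$ with $m$ fixed points, so $\lh(\pi) = (n-m)/n$, and write $c_i$ for the number of cycles of length $i$, so $c_1 = m$ and $\sum_i i c_i = n$. Using Lemma~\ref{lem:SizeOfConjugacyClassInSn}, I would take logarithms:
\[
\log |C(\pi)| = \log n! - \sum_{i=1}^k c_i \log i - \sum_{i=1}^k \log c_i!.
\]
The task is to show this is at least a positive constant times $(n-m)\log n$, after dividing by $\log|G| = \log n!$, which by Stirling is between $\tfrac12 n\log n$ and $2n\log n$.

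The key estimate is to control the two subtracted sums. First, $\sum_{i=1}^k c_i \log i \le \log k \cdot \sum_{i=1}^k c_i \le \log n \cdot \sum_{i \ge 2} c_i \le \log n \cdot \tfrac12(n-m)$, since the non-trivial cycles number at most $(n-m)/2$ and each has length at most $n$. Second, $\sum_{i=1}^k \log c_i! \le \log\bigl(\prod_i c_i!\bigr)$; since $\sum_{i\ge 2} c_i \le (n-m)/2$ and $c_1 = m \le n$, a crude bound gives $\prod_i c_i! \le n! \cdot ((n-m)/2)!$ — actually it is cleaner to note $\prod_{i\ge 1} c_i! \le (\sum_i c_i)! \le n!$ is too weak, so instead I would separate the $i=1$ term: $\log c_1! = \log m! \le \log n!$, whereas $\log m!$ also appears with the right sign in $\log n! - \log m!$... wait — I should be more careful. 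The honest route is: bound $\sum_{i} \log c_i!$ using $c_i \log c_i \le c_i \log n$ for $i \ge 2$ and the fact that, separately, the contribution of the $c_1 = m$ fixed points to $\log n!$ is roughly $\log m!$, which cancels. Concretely I would estimate
\[
\log n! - \log m! = \sum_{i=m+1}^{n} \log i \ge (n-m)\log m \ge \tfrac12(n-m)\log n
\]
whenever $m \ge \sqrt n$ (treating the small-$m$ regime separately, where $\log n! \ge \tfrac12 n\log n \ge \tfrac12(n-m)\log n$ already since $n - m \le n$), and then show the remaining subtracted terms $\sum_{i\ge 2} c_i \log i + \sum_{i\ge 2} \log c_i!$ are at most, say, $\tfrac14 (n-m)\log n + o(n\log n)$, so that $\log|C(\pi)| \ge c\,(n-m)\log n$ for a suitable constant and all large $n$.

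I expect the main obstacle to be the bookkeeping in the regime where $\pi$ consists of many short cycles of the same length — e.g.\ $n/2$ transpositions — because then $c_2$ is of order $n$ and $\log c_2!$ is of order $n\log n$, which is exactly the same order as $\log|G|$; here one must verify that the coefficient coming from $\log n! - \log c_2! \,2^{c_2}$ is still bounded below by a positive constant times $n\log n$, i.e.\ that $\log\binom{n}{2,2,\dots,2}$ grows like $n \log n$. This is true — the multinomial coefficient $n!/(2^{n/2}(n/2)!)$ has logarithm $\sim \tfrac12 n\log n$ — but it shows the constant cannot be taken close to $1$, and it is the case that pins down the modulus. Once this worst case is handled, interpolating between it and the opposite extreme (one long cycle, where $|C(\pi)| = n!/n$ and $\log|C(\pi)| \sim n\log n$ with $\lh \to 1$) by the crude per-cycle bounds above gives the uniform estimate $\lh \le c\,\lc$ on all of $\mathcal S$, completing the proof that $\lh$ and $\lc$ are asymptotically equivalent on $\mathcal S$.
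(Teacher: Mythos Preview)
Your approach via the explicit formula $|C(\pi)| = n!/\prod_i i^{c_i} c_i!$ is natural, but the plan as written has a genuine gap. You lower-bound $\log n! - \log m!$ by $\tfrac{1}{2}(n-m)\log n$, and then propose to bound the remaining subtracted terms $\sum_{i\ge 2}(c_i\log i + \log c_i!)$ by $\tfrac{1}{4}(n-m)\log n + o(n\log n)$. That bound is simply false: in the all-transpositions case ($m=0$, $c_2=n/2$) one has $\log c_2! \sim \tfrac{1}{2}n\log n = \tfrac{1}{2}(n-m)\log n$, not $\tfrac{1}{4}$ --- exactly as you compute two sentences later. With both constants equal to $\tfrac{1}{2}$, your two estimates cancel and leave only $\log|C(\pi)|\ge o(n\log n)$, which is useless. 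The subsequent appeal to ``interpolating'' between the two extreme cycle types is not a proof: nothing in your per-term bounds shows the extremes are the only bottlenecks. (Also, the additive error $o(n\log n)$ is on the wrong scale; when $n-m$ is small it dominates the target $(n-m)\log n$.)

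The paper bypasses this cancellation by bounding the centralizer structurally rather than term-by-term, splitting on whether $k:=n-m$ exceeds $n/2$. For $k\le n/2$ there are at most $k/2$ non-trivial cycles, giving $|C_{S_n}(\pi)|\le (n-k)!\,k^{k/2}$ and hence $|C(\pi)|\ge (n/2)^{k/2}$, so $\lc\ge\tfrac14\lh$. For $k>n/2$ one uses that $\pi$ has at most $n-k/2$ cycles total to obtain $|C(\pi)|\ge (k/2)!$ and $\lc\ge\tfrac18\lh$. Your route can be salvaged if you write $|C(\pi)|=\binom{n}{n-m}\cdot|\pi'^{S_{n-m}}|$, with $\pi'$ the restriction to the support, and then prove that every \emph{fixed-point-free} permutation in $S_s$ has conjugacy class of size at least $(s!)^{1/2-o(1)}$ --- but that is precisely the estimate your sketch leaves as an unproved hope.
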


\begin{proof}
Let $\pi$ be a permutation in $S_n$. Assume that $\pi$ has $n-k$ fixed points, that is $\lh(\pi)=k/n$. For the sake of simplicity we only treat the case of even $k$ and note that the odd case is almost the same. We distinguish the cases $k>n/2$ and $k\leq n/2$.

If $k>n/2$ we can estimate the size of the centralizer of $\pi$ by
\[|C_{S_n}(\pi)|\leq n(n-1)\cdot \ldots \cdot (n-k/2),\]
since $\pi$ has at most $n-k/2$ cycles and every permutation commuting with $\pi$ is determined by its action on one point from each cycle of $\pi$. Therefore
\[|C(g)|\geq \frac{n!}{n(n-1)\cdot \ldots \cdot (n-k/2)}=(k/2)!.\]
 By loosely applying Stirling's approximation $(k/2)!\geq (k/2)^{k/4}$ follows. Since $\log(k/2)\geq \log n/2$ for $n\geq 17$
\[ \lc(\pi)=\frac{\log |C(\pi)|}{\log (n!)} \geq \frac{\log((k/2)!)}{\log(n!)}\geq \frac{\frac{1}{2}\log(k/2)\frac{k}{2}}{n\log n}\geq \frac{k}{8n}= \tfrac{1}{8}\lh(\pi).\]

If $k\leq n/2$, then $\pi$ has at most $k/2$ non-trivial cycles. Since the permutations commuting with $\pi$ are determined by the action of a single point from each cycle, we deduce the estimate
\[|C_{S_n}(\pi)|\leq (n-k)!k^{k/2}.\]
It is clear that $k^{k/2}\leq (n/2)^{k/2}$ and $n(n-1)\cdot \ldots \cdot (n-k+1)\geq (n/2)^k$, and therefore
\[|C(\pi)|\geq \frac{n!}{(n-k)!k^{k/2}} \geq \frac{n(n-1)\cdot \ldots \cdot (n-k+1)}{k^{k/2}}\geq \frac{(n/2)^k}{(n/2)^{k/2}}= (n/2)^{k/2}.\]
Because $2\log(n/2)\geq \log n$, we finally obtain
\[\lc(g)\geq \frac{\frac{k}{2}\log(n/2)}{\log(n!)} \geq \frac{k\log(n/2)}{2n\log n}\geq \tfrac{1}{4}\lh(\pi).\]
\end{proof}

\begin{theorem}\label{thm:EquivalentLengthFunctionsInSnAndAn}
In $\mathcal{S}$ and $\mathcal{A}$ the length functions $\lh$ and $\lc$ are asymptotically equivalent.
\end{theorem}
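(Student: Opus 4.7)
The plan is to reduce the theorem to the two comparison lemmas for symmetric groups that we have just established. For the class $\mathcal{S}$ there is essentially nothing to do: Lemma~\ref{lem:LengthFunctionComparisonNumberOneInSn} and Lemma~\ref{lem:LengthFunctionComparisonNumberThreeInSn} together give asymptotic bounds in both directions, hence asymptotic equivalence.

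The substantive work is the transfer of these bounds to the class $\mathcal{A}$. My plan is to compare the two length functions on $\pi\in A_n$ with their values on $\pi$ regarded as an element of $S_n$. First, $\lh(\pi)$ depends only on the number of fixed points of $\pi$, so its value is identical in $A_n$ and in $S_n$. Second, any $S_n$-conjugacy class meeting $A_n$ lies entirely in $A_n$ and either remains a single $A_n$-conjugacy class or splits into two equal halves, so
\[|C_{A_n}(\pi)| \in \left\{|C_{S_n}(\pi)|,\; \tfrac{1}{2}|C_{S_n}(\pi)|\right\}.\]
Third, $\log|A_n|=\log n! - \log 2$ differs from $\log|S_n|$ only by a bounded additive term. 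Combining these three observations, one checks that
\[\lc^{A_n}(\pi)=\frac{\log|C_{A_n}(\pi)|}{\log n! - \log 2}\quad\text{and}\quad \lc^{S_n}(\pi)=\frac{\log|C_{S_n}(\pi)|}{\log n!}\]
agree up to a multiplicative factor tending to $1$ as $n\to\infty$, provided $|C_{S_n}(\pi)|$ is not too small. Pushing the $\mathcal{S}$-estimates through this comparison then yields the two required asymptotic inequalities for the class $\mathcal{A}$.

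The one technical point that requires care is the regime where $|C_{S_n}(\pi)|$ is tiny: there the additive $\log 2$ in the numerator of $\lc^{A_n}$ could spoil the comparison. However, since $A_n$ is simple for $n\geq 5$, every non-identity element has an $A_n$-conjugacy class of size polynomial in $n$ (already a single $3$-cycle has $|C_{A_n}(\pi)|\geq n(n-1)(n-2)/3$, and elementary centralizer bounds along the lines of Lemma~\ref{lem:LengthFunctionComparisonNumberThreeInSn} give analogous lower bounds for other cycle types). Consequently $\log|C_{A_n}(\pi)|$ tends to infinity uniformly over non-identity $\pi\in A_n$ as $n\to\infty$, and the $\log 2$ correction can be absorbed by slightly enlarging the moduli of asymptotic boundedness and choosing the threshold $N$ large enough. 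This bookkeeping of correction terms is the main (and essentially only) obstacle in the argument; the core quantitative content lives entirely in the two preceding lemmas for symmetric groups.
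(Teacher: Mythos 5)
Your proposal is correct and follows essentially the same route as the paper: the paper's proof simply records that an $S_n$-class meeting $A_n$ either stays intact or splits into two equal halves and then invokes Lemmas~\ref{lem:LengthFunctionComparisonNumberOneInSn} and~\ref{lem:LengthFunctionComparisonNumberThreeInSn}. Your additional bookkeeping of the $\log 2$ corrections (and the observation that non-identity classes in $A_n$ have size at least polynomial in $n$) is a valid and slightly more careful spelling-out of the same argument.
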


\begin{proof}
The conjugacy classes of $S_n$ behave in two different ways. Either they correspond to exactly one conjugacy class in $A_n$, or they split into two classes in $A_n$. In the first case the size of the conjugacy class stays the same, whereas in the second case it splits into two parts of equal size. (Confer \cite{wilson09}, Paragraph~2.3.2.) Now Lemma~\ref{lem:LengthFunctionComparisonNumberOneInSn} and Lemma~\ref{lem:LengthFunctionComparisonNumberThreeInSn} apply.
\end{proof}

\subsection{Length functions on linear groups}\label{ssec:LengthFunctionsOnLinearGroups}
Given a (finite dimensional) vector space $V$ we write $\GL(V)$ for all bijective linear transformations of $V$, $\SL(V)$ for all linear transformations of $V$ of determinant $1$. When $V=F^n$ for some field $F$ we use notation $\GL_n(F)$ and the like, which reduces further to $\GL_n(q)$ etc. when $F$ is the finite field $\mathbb{F}_q$ of order $q$.

We shall deal in particular with linear groups over finite fields and introduce the symbols $\mathcal{GL}(q)$ for the class of all general linear groups defined over the field $\mathbb{F}_q$ and $\mathcal{GL}$ for the union of these, where $q$ ranges over all prime powers. Exchanging general for special yields $\mathcal{SL}(q)$ and $\mathcal{SL}$. If $V$ is a vector space over a field $F$ we will write $1$ for the identity mapping $V\rightarrow V$ and write simply $\alpha$ for the mapping $\alpha \cdot 1$, where $\alpha\in F$.

\begin{proposition}
Let $V$ be a vector space of dimension $n$. Then
\[\lr(g):=n^{-1}\rank(1-g)\]
is an invariant length function on $\GL(V)$.
\end{proposition}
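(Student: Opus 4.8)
The plan is to verify the three length-function axioms LF1--LF3 together with invariance for $\lr(g) = n^{-1}\rank(1-g)$ on $\GL(V)$, and to do so by thinking of $\rank(1-g)$ as $\dim V - \dim\ker(1-g)$, i.e.\ the codimension of the fixed subspace $\mathrm{Fix}(g) = \ker(1-g)$ of $g$. Write $f(g) := \rank(1-g) = n - \dim\mathrm{Fix}(g)$, so that $\lr = f/n$; since $n$ is a fixed positive constant it suffices to check the corresponding properties for $f$.

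For LF1, note $f(g) = 0$ iff $\mathrm{Fix}(g) = V$ iff $g = 1$; this is immediate. For LF2, the key observation is that $1 - g^{-1} = -g^{-1}(1-g)$, and since $g^{-1}$ is invertible and scalar multiplication by $-1$ is invertible, $\rank(1-g^{-1}) = \rank(1-g)$, so $f(g^{-1}) = f(g)$. For invariance, $1 - hgh^{-1} = h(1-g)h^{-1}$, and conjugation by an invertible map preserves rank, giving $f(hgh^{-1}) = f(g)$. The only step requiring genuine (though still elementary) work is the triangle inequality LF3: $f(gh) \le f(g) + f(h)$. The cleanest route is via fixed subspaces: if a vector $v$ lies in $\mathrm{Fix}(g) \cap \mathrm{Fix}(h)$ then $gh\,v = g(hv) = gv = v$, so $\mathrm{Fix}(g)\cap\mathrm{Fix}(h) \subseteq \mathrm{Fix}(gh)$. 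Hence, using $\dim(A\cap B) \ge \dim A + \dim B - n$,
\[
\dim\mathrm{Fix}(gh) \ \ge\ \dim\mathrm{Fix}(g) + \dim\mathrm{Fix}(h) - n,
\]
and subtracting both sides from $n$ yields exactly $f(gh) \le f(g) + f(h)$. Dividing through by $n$ gives all four properties for $\lr$.

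I expect no serious obstacle here; the subtlety, if any, is purely in choosing the right bookkeeping, namely translating everything into statements about the fixed subspace so that the triangle inequality becomes the standard dimension inequality for an intersection of subspaces rather than an awkward direct manipulation of $\rank(1-gh)$ in terms of $\rank(1-g)$ and $\rank(1-h)$. As a remark, this same computation, specialized to $V = F^n$ with $g$ a permutation matrix, proves Proposition~\ref{prop:RankLengthOnSn}: a permutation $\pi$ with $l$ cycles has $\dim\mathrm{Fix}(\pi) = l$ (one fixed basis vector per cycle, namely the sum of the basis vectors in that cycle), so $\lr(\pi) = 1 - l/n$, and the $\GL$-computation restricts to give invariance and the axioms on $S_n$.
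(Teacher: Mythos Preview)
Your proof is correct. For LF1, LF2, and invariance you do exactly what the paper does (the same algebraic identities $1-g^{-1}=-g^{-1}(1-g)$ and $1-hgh^{-1}=h(1-g)h^{-1}$). The only genuine difference is in the triangle inequality: you pass through fixed subspaces, using $\mathrm{Fix}(g)\cap\mathrm{Fix}(h)\subseteq\mathrm{Fix}(gh)$ together with the dimension inequality for an intersection, whereas the paper does the ``direct manipulation'' you anticipated being awkward, and it turns out to be perfectly clean:
\[
\rank(1-gh)=\rank((h^{-1}-g)h)=\rank\bigl((1-g)-(1-h^{-1})\bigr)\le\rank(1-g)+\rank(1-h^{-1})=\rank(1-g)+\rank(1-h),
\]
using only subadditivity of rank and LF2. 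The two arguments are dual (kernels versus images) and of the same length; your version makes the geometric content (common fixed vectors) more visible, while the paper's version stays entirely in rank arithmetic and avoids the auxiliary dimension inequality. Your closing remark on restricting to permutation matrices matches the paper's own deferred proof of Proposition~\ref{prop:RankLengthOnSn}.
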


\begin{proof}
It is clear that $\lr$ takes its values in $[0,1]$ and that $\lr(g)=0$ if and only if $g=1$. Moreover, if $g,h$ are in $\GL(V)$, then
\[\rank(\id-g)=\rank(-g^{-1}(1-g))=\rank(1-g^{-1})\]
and
\begin{align*}
 \rank(1-gh)&=\rank((h^{-1}-g)h)\\
&=\rank((1-g)-(1-h^{-1}))\\
&\leq \rank(1-g)+\rank(1-h^{-1})\\
&=\rank(1-g)+\rank(1-h).
\end{align*}
The invariance of $\lr$ follows from
\[\rank(1-hgh^{-1})=\rank(h(1-g)h^{-1})=\rank(1-g).\]
\end{proof}

We call the function $\lr$ the \deph{rank length}.

Now the following conclusion is rather obvious.

\begin{proof}[Proposition~\ref{prop:RankLengthOnSn}]
The symmetric group embeds as the subgroup of permutation matrices into $\GL(V)$. If $\pi$ consists of the cycles $\pi_1,\ldots,\pi_l$ then the corresponding permutation matrix $P_\pi$ equals the direct sum $P_{\pi_1}\oplus \ldots \oplus P_{\pi_l}$, where $\rank(\id-P_{\pi_i})=k-1$ if $\pi_i$ has length $k$. Hence $\lr$ is the restriction of the rank length to permutations.
\end{proof}

We want to prove a similar result for general linear groups over finite fields as we obtained for permutation groups in the last subsection. As it turns out it is necessary to gain some independence of the base field. We therefore introduce the \deph{Jordan length} (the name of which is explained below.)
\[\lj:=(\lr_{\GL(V)/Z(\GL(V))})^{\GL(V)}.\]
A more explicit description of $\lj$ is
\[\lj(g)=n^{-1}\cdot \inf_{\alpha\in F^{\times}}\rank(\alpha - g),\]
as the center of $\GL(V)$ is isomorphic to $F^\times$.

From now on we shall write
\[m_g:=\sup_{\alpha\in F^{\times}}\dim(\ker(\alpha - g)),\]
whenever $g$ is an element in a linear group over a field $F$.
 With this definition yet another characterization of the Jordan length is
\[\lj(g)=\frac{n-m_g}{n}.\]

\begin{proposition}\label{prop:SmallRankLengthImpliesLargeJordanLength}
 Let $g$ be an element in $\GL(V)$. If $\lr(g)\leq \delta$, then $\lj(g)\geq \min\{(1-\delta),\delta\}$.
\end{proposition}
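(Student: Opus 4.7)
The plan is to bound the Jordan length from below by exploiting the definition $\lj(g) = n^{-1}\inf_{\alpha \in F^\times}\rank(\alpha - g)$. Since $\rank$ takes only values in $\{0,1,\ldots,n\}$, the infimum is attained; pick an $\alpha \in F^\times$ attaining it, so that $\lj(g) = n^{-1}\rank(\alpha-g)$. I would then distinguish two cases, depending on whether $\alpha=1$.

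In the case $\alpha = 1$, we have $\lj(g) = \lr(g)$; reading the hypothesis with $\delta$ saturated by $\lr(g)$, the trivial bound $\min\{1-\delta,\delta\} \le \delta$ immediately gives $\lj(g) \ge \min\{1-\delta,\delta\}$.

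The substantive case is $\alpha \ne 1$. The crucial input is that eigenvectors of $g$ for distinct eigenvalues are linearly independent, so $\ker(1-g)\cap\ker(\alpha-g)=\{0\}$, and consequently
\[\dim\ker(1-g)+\dim\ker(\alpha-g)\le n.\]
Substituting $\dim\ker(1-g) = n(1-\lr(g)) \ge n(1-\delta)$ and $\dim\ker(\alpha-g)=n(1-\lj(g))$ and rearranging yields $\lj(g)\ge 1-\delta\ge \min\{1-\delta,\delta\}$.

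The argument is short and conceptual; the only structural input is the direct-sum property of eigenspaces for distinct eigenvalues, so no deeper fact (such as Jordan normal form) is needed, even though precisely this kind of estimate motivates the name ``Jordan length.'' I do not foresee any substantive obstacle beyond being careful about how the hypothesis $\lr(g) \le \delta$ interacts with the $\alpha=1$ case.
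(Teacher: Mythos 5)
Your proof is correct and is essentially the paper's own argument: the paper likewise splits on whether the supremum defining $m_g$ is attained at $\alpha=1$ (where $\lj(g)=\lr(g)$ and one must, as you note, read the hypothesis with $\delta=\lr(g)$) or at some $\alpha\neq 1$, in which case the triviality of $\ker(1-g)\cap\ker(\alpha-g)$ gives $\dim\ker(1-g)+m_g\leq n$ and hence $\lj(g)\geq 1-\lr(g)\geq 1-\delta$. The only difference is cosmetic: you phrase the dichotomy via which $\alpha$ attains the infimum of $\rank(\alpha-g)$, while the paper writes it as $m=m_g$ versus $m\neq m_g$.
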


\begin{proof}
Let $m=\rank(1 -g)$. In the easiest case $\lj(g)=\lr(g)\geq \delta$. Hence we can assume $m\neq m_g$. Then of course $m+m_g\leq n$ and
\[\lj(g)=\frac{n-m_g}{n}\geq \frac{n-(n-m)}{n} = \frac{m}{n}=1-\lr(g) \geq 1- \delta\]
follows.
\end{proof}

\begin{corollary}
 Let $g$ be an element in $\GL(V)$. If $\lr(g)\leq 1/2$, then $\lr(g)=\lj(g)$.
\end{corollary}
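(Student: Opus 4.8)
The plan is to deduce this directly from Proposition~\ref{prop:SmallRankLengthImpliesLargeJordanLength} together with the elementary observation that $\lj$ is always dominated by $\lr$. First I would record that $\lj(g)\leq\lr(g)$ holds for every $g\in\GL(V)$, with no hypothesis on $\lr(g)$: this is immediate from the explicit formula $\lj(g)=n^{-1}\inf_{\alpha\in F^{\times}}\rank(\alpha-g)$ by specializing the infimum to $\alpha=1$, which yields $n^{-1}\rank(1-g)=\lr(g)$. Equivalently, $m_g\geq\dim(\ker(1-g))$.

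For the reverse inequality I would invoke Proposition~\ref{prop:SmallRankLengthImpliesLargeJordanLength} with $\delta:=\lr(g)$. Since by hypothesis $\delta\leq 1/2$, we have $1-\delta\geq 1/2\geq\delta$, hence $\min\{1-\delta,\delta\}=\delta=\lr(g)$, and the proposition gives $\lj(g)\geq\lr(g)$. Combining this with the bound from the first step yields $\lj(g)=\lr(g)$, as claimed.

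I do not anticipate any genuine obstacle: the statement is a bookkeeping corollary of the preceding proposition, recording the threshold $1/2$ below which the rank length and the Jordan length coincide. The only non-formal point is the triviality $1-\delta\geq\delta$ for $\delta\leq 1/2$, used to simplify the minimum appearing in Proposition~\ref{prop:SmallRankLengthImpliesLargeJordanLength}.
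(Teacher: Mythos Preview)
Your argument is correct and follows the same route as the paper: the paper's proof consists solely of the line ``By definition $\lj(g)\leq \lr(g)$,'' leaving the reverse inequality to the immediately preceding Proposition~\ref{prop:SmallRankLengthImpliesLargeJordanLength}, which you invoke explicitly with $\delta=\lr(g)$. Your write-up is simply a more detailed version of the same idea.
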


\begin{proof}
 By definition $\lj(g)\leq \lr(g)$.
\end{proof}

We cite almost verbatim from the introduction of the Jordan decomposition on pp.~395, 396 in \cite{liebeckshalev01}. Each $g\in \SL_n(q)$ equals the commuting product $su$ of a semisimple element $s$ and a unipotent element $u$, this being called the Jordan decomposition. 

We denote by $J_k$ the unipotent $k\times k$ Jordan matrix
\[\left(\begin{matrix}
          1 & 1 &  & & \\
	  & 1 & 1 & &\\
	  &  & \ddots &\ddots & \\
	  && & 1 & 1\\
	  &&&& 1\\
        \end{matrix}\right)
\]
If $m_{ij}$ are non-negative integers for all $j=1\ldots r$, $i=1\ldots k_j$, we let $J^{m_j}:=m_{1j}J_1\oplus m_{2j}J_2\oplus \ldots \oplus m_{k_jj}J_{k_j}$, where $m J_i:=J_i\oplus \ldots \oplus J_i$ is the direct sum of $m$ Jordan blocks of size $i$. Then $g$ is conjugate to a matrix
\[\alpha_1J^{m_1}\oplus \ldots \oplus \alpha_rJ^{m_r}\oplus \lambda_1\otimes J^{n_1}\oplus \ldots \oplus \lambda_t \otimes J^{n_t},\]
where $m_j$ and $n_j$ are appropriate finite sequences of non-negative integers, $\alpha_j\in \mathbb{F}_q^\times$, and the $\lambda_j$ are irreducible matrices. In this representation we can assume that $m:=m_{11}\geq m_{12}\geq \ldots \geq m_{1r}$. That is $m$ counts the maximal number of Jordan blocks of size $1$ to an eigenvalue $\alpha_1\in\mathbb{F}_q$ of $g$.

Fortunately $m$ and $m_g$ compare well enough to relate the results in \cite{liebeckshalev01} to the Jordan length. A first application can be seen in the following theorem.

\begin{theorem}\label{thm:EquivalentLengthFunctionsInGeneralAndSpecialLinearGroups}
The pseudo length functions $\lc$ and $\lj$ are asymptotically equivalent in $\mathcal{GL}$ and $\mathcal{SL}$.
\end{theorem}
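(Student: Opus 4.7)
The plan is to establish both asymptotic inequalities $\lc \leq C_1 \lj$ and $\lj \leq C_2 \lc$ uniformly in $n$ and $q$, in parallel with Lemmas~\ref{lem:LengthFunctionComparisonNumberOneInSn} and~\ref{lem:LengthFunctionComparisonNumberThreeInSn}. I work in $\mathcal{GL}$ first and then transfer the result to $\mathcal{SL}$.

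For $\lc \leq C_1 \lj$ the idea is to exhibit a large explicit subgroup of the centralizer of $g$. Pick $\alpha \in \mathbb{F}_q^\times$ realizing the supremum defining $m_g$ and write $V = \mathbb{F}_q^n$ as the direct sum of its $\alpha$-primary component $V_\alpha$ and a $g$-invariant complement $W$. Since $C_G(g)$ preserves this decomposition and its factor on $V_\alpha$ is $C_{\GL(V_\alpha)}(N)$ with $g|_{V_\alpha} = \alpha + N$ nilpotent, the standard formula for nilpotent centralizers has leading term $q^{(\lambda_1')^2}$ where $\lambda_1' = \dim\ker N = m_g$ is the first part of the conjugate Jordan partition. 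Hence $|C_G(g)| \geq c \cdot q^{m_g^2}$ for a universal $c>0$, and combining this with $|\GL_n(q)| \leq q^{n^2}$ and $\log|\GL_n(q)| \geq \tfrac{1}{2} n^2 \log q$ yields
\[
\lc(g) \;\leq\; \frac{(n^2 - m_g^2)\log q + O(1)}{\log|\GL_n(q)|} \;\leq\; \frac{2(n-m_g)(n+m_g)}{n^2} + o(1) \;\leq\; 4\,\lj(g)
\]
for non-central $g$ (which satisfy $\lj(g) \geq 1/n$) and $n$ sufficiently large.

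For the harder inequality $\lj \leq C_2 \lc$ I appeal to the lower bound on conjugacy class sizes of Liebeck and Shalev in \cite{liebeckshalev01}, which supplies an absolute constant $c'>0$ such that $|g^G| \geq q^{c' n(n-m)}$ for every non-central $g \in \GL_n(q)$, with $m$ as in the excerpt. Since the size-$1$ Jordan blocks counted by $m$ form a subset of the Jordan blocks whose total number equals $\dim\ker(\alpha_1 - g)$, we have $m \leq m_g$ and therefore $n - m \geq n - m_g$. Dividing by $\log|\GL_n(q)| \leq n^2 \log q$ gives
\[
\lc(g) \;\geq\; c' \cdot \frac{n(n-m)}{n^2} \;\geq\; c' \cdot \frac{n - m_g}{n} \;=\; c'\,\lj(g).
\]

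The transfer from $\mathcal{GL}$ to $\mathcal{SL}$ is routine: conjugacy classes in $\SL_n(q)$ arise from those in $\GL_n(q)$ by intersection with $\SL_n(q)$ and change in size by at most the factor $|Z(\GL_n(q))| = q-1$, while Proposition~\ref{prop:ConjugacyLengthModuloCenter} together with the very definition of $\lj$ ensures that both length functions are essentially insensitive to the centre, so the asymptotic comparison descends with only a negligible correction. The principal obstacle is extracting from \cite{liebeckshalev01} a bound of precisely the form $|g^G| \geq q^{c' n(n-m)}$ with the exact quantity $m$ of the excerpt, and verifying that the resulting constants are uniform across all prime powers $q$ and all ranks $n$; everything else is elementary Jordan-form bookkeeping.
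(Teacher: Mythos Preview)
Your argument is correct in outline and close in spirit to the paper's, but the two directions are handled slightly differently. For $\lj\leq C_2\,\lc$ you and the paper do essentially the same thing: both rely on Liebeck--Shalev, only the paper quotes the covering result (Lemma~5.4 of \cite{liebeckshalev01}, $C(g)^k=\SL_n(q)$ for $k\geq cn/(n-m)$) and derives the class-size lower bound from it, whereas you phrase it directly as $|g^G|\geq q^{c'n(n-m)}$. Your observation $m\leq m_g$ is exactly what the paper uses at this step. For $\lc\leq C_1\,\lj$, however, you depart from the paper: you bound the centralizer directly via the Jordan-form formula $|C_G(g)|\gtrsim q^{(\lambda_1')^2}=q^{m_g^2}$, while the paper instead invokes Lemma~5.3 of \cite{liebeckshalev01} together with the combinatorial inequality $m_g-m\leq\tfrac12(n-m)$ (so that $n-m_g\geq\tfrac12(n-m)$) to pass from the Liebeck--Shalev parameter $m$ to $m_g$. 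Your route here is more self-contained and avoids one citation; the paper even remarks after the proof that such an elementary centralizer estimate is possible but lengthy. Finally, the paper proves the $\SL$ case first and transfers to $\GL$, whereas you go the other way; either direction is routine.
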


\begin{proof}
We first consider the case of special linear groups. Because $m$ counts the maximal number of Jordan blocks of size $1$ to a fixed eigenvalue of $g$, $m_g$ is maximal when we can find the maximal number of Jordan blocks of minimal size in the Jordan decomposition of $g$. The minimal size remaining for our choice is $2$ and hence
$m_g-m\leq \frac{n-m}{2}$. Now we can deduce as in the proof of Proposition~\ref{prop:RankLengthAndHammingLengthOnSn} that
\[\frac{n-m_g}{n}\geq \frac{n-m}{2n}\geq c'\frac{\log|C(g)|}{|SL_n(q)|}\]
for a constant $c'$, using also Lemma~5.3 in \cite{liebeckshalev01}. Thus $\lc$ is asymptotically bounded by $\lj$.

Lemma~5.4, ibid., states that there is a universal constant $c$ such that whenever $1\neq g\in\SL_n(q)$ and $k\geq \frac{cn}{n-m}$, then $C(g)^k=\SL_n(q)$. We can assume $c$ an integer and $k$ minimal such that $k=\epsilon c\frac{n}{n-m_g}\geq \epsilon c\frac{n}{n-m}$, where the error $\epsilon$ is definitely less than $2$. Now $|\SL_n(q)|\leq |C(g)|^k$ and 
\[k\lc(g)=\frac{\log(|C(g)|^k)}{\log|\SL_n(q)|}\geq \frac{\log|C(g)^k|}{\log|\SL_n(q)|}=1.\]
This implies
\[\lc(g)\geq k^{-1}=\epsilon^{-1}c^{-1}\frac{n-m_g}{n}\geq \tfrac{1}{2}c^{-1}\lj(g).\]

By comparison of the sizes of conjugacy classes in $\SL_n(q)$ and $\GL_n(q)$ the claim follows for $\mathcal{GL}(q)$. Because the above argument is independent of $q$, we have proved the theorem for $\mathcal{SL}$ and $\mathcal{GL}$.
\end{proof}

We remark that the proof of the preceding theorem, which is based on deep results of Liebeck-Shalev, can be done by elementary methods, estimating the size of centralizers of matrices with respect to the number and sizes of Jordan blocks involved. The elementary argument requires several pages, though, and therefore is omitted.

\section{Ultraproducts of finite simple groups}\label{sec:UltraproductsOfFiniteSimpleGroups}
We follow the notation in the first chapter of \cite{carter89} when adressing finite simple groups of Lie type. That is given a vector space $V$ with a bilinear or Hermitian form we write $\GI(V)$ for all isometries of $V$ and $\SI(V):=\GI(V)\cap \SL(V)$ (with the exception of orthogonal forms in characteristic $2$, which is explained below.) Thus for the trivial bilinear form $\GL(V)=\GI(V)$. We write $\Sp(V):=\SI(V)=\GI(V)$ for a symplectic bilinear form on $V$, $\GU(V):= \GI(V)$ and $\SU(V):=SI(V)$ for a Hermitian form on $V$ and $\GO(V):= \SI(V)$ and $\SO(V):= \SI(V)$ for a symmetric bilinear form on $V$ in odd characteristic. (Over characteristic $2$ the group $\SO(V)$ is defined as the kernel of the Dickson invariant.) Furthermore $\Omega(V):=\GO(V)'=\SO(V)'$, the commutator subgroup of $\SO(V)$. We prefix one more letter to denote the quotients of all these groups by their center, thus writing $\PSL(V)$, $\PSp(V)$ and so on.

 When dealing with ultrafilters we introduce the following abbreviating notation. Let $\mathfrak{u}$ be an ultrafilter on a set $I$. We say that a property $P$ holds $\mathfrak{u}$\deph{-almost everywhere} or for $\mathfrak{u}$\deph{-almost all} $i$ if the set $\set{i\in I}{P(i)}$ is in $\mathfrak{u}$. We also write $P(i)\alev{\mathfrak{u}}$ in this situation.

If $A_i$ is a family of algebraic structures indexed by $I$, we write $\prod_{i\rightarrow \mathfrak{u}}A_i$ for the ultraproduct or, when the right index is understood, only $\prod_{\mathfrak{u}}A_i$. A similar notation is used for limits along an ultrafilter, namely $\lim_{i\rightarrow\mathfrak{u}}a_i$, or $\lim_{\mathfrak{u}}a_i$ to save symbols.

In the following we fix a non-principal ultrafilter $\mathfrak{u}$ on the natural numbers.

 If $G_n$ are groups equipped with a generic (pseudo) length function $\ell$ we write $\bs{G}$ for the ultraproduct $\prod_{\mathfrak{u}}G_n$ and $\bs{g}$ for an element represented by a sequence $(g_n)_{n\in\mathbb{N}}$. Moreover we let  \[\ell(\bs{g}):=\lim_{\mathfrak{u}}\ell(g_n).\]
Then $\bs{N}:=\set{\bs{g}\in\bs{G}}{\ell(\bs{g})=0}$ is a normal subgroup, as can be deduced from the properties of pseudo length functions.

\begin{proposition}\label{prop:MetricUltraproductsOfFiniteSimpleGroups}
 Let $\mathcal{G}=\set{G_n}{n\in\mathbb{N}}$ be a collection of finite non-abelian simple groups. Then the group $\bs{G}/\bs{N}$ is simple.
\end{proposition}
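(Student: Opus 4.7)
My plan is to take any normal subgroup $\bs{M}$ of $\bs{G}$ strictly containing $\bs{N}$ and show $\bs{M} = \bs{G}$. Pick $\bs{g} = [(g_n)_n] \in \bs{M}\setminus\bs{N}$; by definition of $\bs{N}$ the number $c := \ell(\bs{g}) = \lim_{\mathfrak{u}}\ell(g_n)$ is strictly positive, and without loss of generality $\ell(g_n) \geq c/2$ for $\mathfrak{u}$-almost all $n$ (so in particular $g_n \neq 1$ for those $n$). The first substantive step is to promote this to a uniform lower bound on the conjugacy length, i.e.\ to exhibit $c' > 0$ with $\lc(g_n) \geq c'$ for $\mathfrak{u}$-almost all $n$. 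When $\ell = \lc$ this is tautological; for the other length functions appearing in the paper, such as $\lh$, $\lr$, $\lj$, it follows directly from the asymptotic equivalences recorded in Theorem~\ref{thm:EquivalentLengthFunctionsInSnAndAn} and Theorem~\ref{thm:EquivalentLengthFunctionsInGeneralAndSpecialLinearGroups}.

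With $\lc(g_n)$ bounded below, I invoke the Liebeck--Shalev theorem recalled earlier: there is a universal constant $K$ such that in every non-abelian finite simple group $G$ and for every non-trivial $g \in G$ one has $C(g)^{\lceil K/\lc(g)\rceil} = G$. Setting $k := \lceil K/c' \rceil$, an integer independent of $n$, this gives $C(g_n)^k = G_n$ for $\mathfrak{u}$-almost all $n$. Given an arbitrary $\bs{h} = [(h_n)_n] \in \bs{G}$, for each such $n$ I choose $x_{1,n},\ldots,x_{k,n} \in G_n$ with
\[ h_n = (x_{1,n}\,g_n\,x_{1,n}^{-1})\cdots(x_{k,n}\,g_n\,x_{k,n}^{-1}). \]
Writing $\bs{x}_i := [(x_{i,n})_n]$, the same identity holds in $\bs{G}$, exhibiting $\bs{h}$ as a product of $k$ conjugates of $\bs{g}$. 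Hence $\bs{h} \in \bs{M}$, and since $\bs{h}$ was arbitrary we conclude $\bs{M} = \bs{G}$, which in turn forces $\bs{G}/\bs{N}$ to be simple.

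The main obstacle is the first step: a priori an element can have positive $\ell$-length but a vanishingly small conjugacy class (so $\lc(g_n) \to 0$ along $\mathfrak{u}$), in which case Liebeck--Shalev only supplies a sequence of bounds $k_n \to \infty$, and an unbounded word of conjugates does not assemble into a single ultraproduct expression. Overcoming this is exactly what the length-function comparisons of the previous subsections are designed for, and it is the reason the proposition genuinely requires the deep, classification-dependent results of Liebeck--Shalev rather than merely the simplicity of the $G_n$.
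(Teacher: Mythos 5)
Your proof is correct and follows essentially the same route as the paper: positivity of the conjugacy length along the ultrafilter gives a uniform bound on $\log|G_n|/\log|C(g_n)|$, Liebeck--Shalev's Theorem~1.1 then yields a single exponent $k$ with $C(g_n)^k=G_n$ $\mathfrak{u}$-almost everywhere, and this transfers to $C(\bs{g})^k=\bs{G}$, so $\bs{N}$ is maximal. The only difference is that you add the (reasonable) preliminary step of passing from a generic length function $\ell$ to $\lc$ via the asymptotic equivalences, whereas the paper reads the proposition with $\ell=\lc$ and starts from $\lc(\bs{g})>0$ directly.
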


\begin{proof}
We show that if $\lc(\bs{g})=\epsilon>0$ for $\bs{g}\in\bs{G}$, then already $N(\bs{g})=\bs{G}$. By Theorem~1.1 in \cite{liebeckshalev01} there is a universal constant $c$ such that whenever $G$ is a finite non-abelian simple group and $1\neq g\in G$, then $C(g)^m=G$ for any $m\geq c\frac{\log|G|}{\log|C(g)|}$. By our assumption $\frac{\log|G_{\omega(i)}|}{\log|C(g_i)|}\leq K\alev{\mathfrak{u}}$. Hence for $m\geq cK$, $C(g_i)^m=G_{\omega(i)}\alev{\mathfrak{u}}$ or equivalently $C(\bs{g})^m=\bs{G}$. We conclude that the set of all elements of zero length in $\bs{G}$ is a maximal normal subgroup and thus $\bs{G}$ divided by this subgroup is simple.
\end{proof}

In fact the converse is also true. If a quotient of a direct product of finite simple non-abelian groups is simple, then it is a quotient as in the preceding proposition for some choice of ultrafilter. Confer \cite{nikolov11}, proof of Proposition~3, for the argument.

\subsection{Some geometry}\label{ssc:GeometryOfFiniteQuasisimpleGroups}
We need some basic geometric lemmas to prepare what follows. We use the symbol $\operp$ to denote the orthogonal direct sum.  The next lemma is proved like Corollary~2.3 in \cite{grove02}.

\begin{lemma}\label{lem:LinearAndBilinearForms}
 Let $V$ be a finite dimensional vector space with non-degenerate bilinear or Hermitian form $(\cdot,\cdot)$ and $W$ some subspace. If $\phi\in W^*$, then there is $v\in V$ such that for all $w\in W$ the equation $(w,v)=\phi(w)$ holds.
\end{lemma}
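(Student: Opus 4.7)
The statement is the familiar Riesz-type representation result for nondegenerate (sesqui)linear forms on finite dimensional spaces, so the plan is to reduce it to that statement applied to $V$ and then restrict.

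First I would set up the natural map $\Psi\colon V\to V^{*}$ defined by $\Psi(v)(x)=(x,v)$. In the bilinear case this is linear; in the Hermitian case it is conjugate-linear, but in either setting it is additive and bijective onto $V^{*}$ as a function, which is all that matters for the argument. The key point is that $\Psi$ is injective: if $\Psi(v)=0$ then $(x,v)=0$ for every $x\in V$, and non-degeneracy of $(\cdot,\cdot)$ forces $v=0$. Since $V$ is finite dimensional and $\dim V=\dim V^{*}$, the injective map $\Psi$ must hit every element of $V^{*}$.

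Next I would lift the given functional $\phi\in W^{*}$ to a functional on all of $V$. Choose any vector space complement $W'$ of $W$ in $V$ (which exists because $V$ is finite dimensional) and define $\tilde\phi\in V^{*}$ by $\tilde\phi(w+w')=\phi(w)$ for $w\in W$, $w'\in W'$. Then $\tilde\phi$ restricts to $\phi$ on $W$. By the surjectivity of $\Psi$ established in the previous paragraph, there is some $v\in V$ with $\Psi(v)=\tilde\phi$, i.e.\ $(x,v)=\tilde\phi(x)$ for all $x\in V$. Restricting to $x=w\in W$ gives $(w,v)=\phi(w)$, which is the desired conclusion.

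There is no serious obstacle here: the only mild subtlety is the Hermitian case, where $\Psi$ fails to be $F$-linear, but since we are only asserting the existence of $v$ (not describing a canonical linear dependence on $\phi$), the bijection of underlying sets $V\to V^{*}$ is enough. A brief sentence noting this should suffice in the written proof.
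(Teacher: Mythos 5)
Your argument is correct and is essentially the standard one the paper invokes (it only cites Corollary~2.3 of Grove's book, whose proof is exactly this: the map $v\mapsto(\cdot,v)$ is injective by non-degeneracy, hence surjective onto $V^{*}$ by dimension count, and one extends $\phi$ from $W$ to $V$ before representing it). Your remark on the Hermitian case is the right one; to be fully precise one can note that the conjugate-linear $\Psi$ is linear over the fixed field of the involution, so injectivity still forces surjectivity.
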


\begin{lemma}\label{lem:ExtensionToNonDegenerateSubspace}
 Let $V$ be a finite dimensional vector space with non-degenerate bilinear or Hermitian form $(\cdot,\cdot)$ and $W$ a subspace. Let $R$ be the radical of $W$ and $W'$ a complement of $R$ in $W$. Then there is a subspace $W''$ of $V$, which satisfies $\dim(W'')=\dim(R)$ and $(W''\oplus W^\perp) \operp W'=V$. In particular $W'$ and $U:=W''\oplus W^\perp$ are non-degenerate.
\end{lemma}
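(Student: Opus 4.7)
The plan is a short linear-algebraic argument: first show that the form restricted to $W'$ is non-degenerate, then use this to split $V$ orthogonally as $W'\operp(W')^{\perp}$, and finally pick $W''$ as any vector-space complement of $W^{\perp}$ inside $(W')^{\perp}$.

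The only mildly delicate point is the non-degeneracy of $W'$. Given $w'\in W'$ orthogonal (in $V$) to all of $W'$, I would observe that $R\subset W^{\perp}$ because $R$ is the radical of the restriction of the form to $W$, so by reflexivity of the form one has $(w',r)=0$ for every $r\in R$ as well. Combining with $(w',v)=0$ for $v\in W'$ gives $(w',w)=0$ for every $w\in W'+R=W$, which places $w'\in W\cap W^{\perp}=R$. Since $W'\cap R=0$ we conclude $w'=0$, so the restriction of the form to $W'$ is non-degenerate.

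Once this is known, Lemma~\ref{lem:LinearAndBilinearForms} (applied with $W'$ in place of $W$) shows that the map $V\to (W')^{*}$, $v\mapsto (\cdot,v)|_{W'}$, is surjective with kernel $(W')^{\perp}$, hence $\dim (W')^{\perp}=\dim V-\dim W'$. Together with $W'\cap (W')^{\perp}=0$ (from the non-degeneracy just established), this yields $V=W'\operp (W')^{\perp}$. Because $W'\subset W$ we have $W^{\perp}\subset (W')^{\perp}$, so I would choose any vector-space complement $W''$ of $W^{\perp}$ inside $(W')^{\perp}$. A dimension count gives
\[\dim W''=\dim(W')^{\perp}-\dim W^{\perp}=(\dim V-\dim W')-(\dim V-\dim W)=\dim R,\]
and $(W''\oplus W^{\perp})\operp W'=(W')^{\perp}\operp W'=V$ is exactly the decomposition sought. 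Non-degeneracy of $U=(W')^{\perp}=W''\oplus W^{\perp}$ is automatic from $V=W'\operp U$ and the non-degeneracy of $V$ and $W'$. I do not anticipate any serious obstacle; the only subtlety is the opening step, where one must exploit that the radical $R$ of the form on $W$ already sits inside $W^{\perp}$ in order to promote $W'$-orthogonality to $W$-orthogonality.
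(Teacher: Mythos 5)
Your proof is correct, but it is organized differently from the paper's. The paper constructs $W''$ explicitly and inductively: it picks a basis $w_1,\ldots,w_r$ of $R$, extends it to a basis of $W+W^\perp$, and uses Lemma~\ref{lem:LinearAndBilinearForms} repeatedly to produce vectors $v_1,\ldots,v_r$ dual to the $w_l$ and orthogonal to the remaining basis vectors, killing the radical of $W+W^\perp$ one dimension at a time; $W''$ is then $\left<v_1,\ldots,v_r\right>$, and the non-degeneracy of $W'$ and $U$ falls out at the end. You instead make the non-degeneracy of $W'$ the first step (via the observation $R\subset W^\perp$, which correctly upgrades $W'$-orthogonality to $W$-orthogonality), deduce the orthogonal splitting $V=W'\operp (W')^\perp$ from Lemma~\ref{lem:LinearAndBilinearForms} and a dimension count, and then take $W''$ to be \emph{any} linear complement of $W^\perp$ in $(W')^\perp$. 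Both arguments rest on the same two ingredients (Lemma~\ref{lem:LinearAndBilinearForms} and reflexivity of the forms in play), but yours is shorter and non-constructive, while the paper's induction additionally exhibits a concrete basis of $W''$ pairing dually with a basis of $R$. For the way the lemma is used later (only the decomposition $V=U\operp W'$ with $U$ non-degenerate and $U^\perp=W'$ matters), your abstract choice of $W''$ is entirely sufficient.
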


\begin{proof}
We use Lemma~\ref{lem:LinearAndBilinearForms}. Let $w_1,\ldots,w_r$ be a basis of $R$ and $w_1,\ldots,w_k$ an extension to a basis of $W+W^\perp$. For $r=0$ there is nothing to show since $W\oplus W^\perp=V$. Now assume $r\geq 1$ and define $\phi_1\in (W+W^\perp)^*$ by $\phi_1(w_1)=1$ and $\phi_1(w_i)=0$ otherwise. Then there is $v_1\in V$ such that $(w_1,v_1)=1$ and $v\perp \left<w_2,\ldots,w_k\right>$. Now $\dim(\rad(W+W^\perp+\left<v_1\right>))=r-1$ and we can proceed inductively defining $\phi_l\in (W+W^\perp+\left<v_1,\ldots,v_{l-1}\right>)^*$ by $\phi_l(w_l)=1$ and $\phi_l(w_i)=0$, $\phi_l(v_i)=0$ for the remaining basis vectors. In the end this gives us $v_1,\ldots,v_r$ such that $W'':=\left<v_1,\ldots,v_r\right>$ meets our expectations.
\end{proof}

\begin{lemma}\label{lem:RestrictedGroupOfIsometries}
 Let $V$ be a finite dimensional vector space over a field $F$ with bilinear or Hermitian form $B=(\cdot,\cdot)$. We exclude the case that $\chr(F)=2$ and $B$ symmetric. Let $U$ be a non-degenerate subspace of $V$. Then the subgroup $H:=\set{g\in\SI(V)}{g|U^\perp=\id_{U^\perp}}$ of $\SI(V)$ is isomorphic to $\SI(U)$.
\end{lemma}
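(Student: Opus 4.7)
The plan is to build the isomorphism $H \cong \SI(U)$ by restriction to $U$, with inverse given by extension by the identity on $U^\perp$. The key geometric fact making this work is that non-degeneracy of $U$ implies $V = U \operp U^\perp$ together with the double-perp identity $(U^\perp)^\perp = U$.

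First I would verify that each $g \in H$ preserves $U$. Since $g$ fixes $U^\perp$ pointwise, in particular $g(U^\perp) = U^\perp$; because $g$ is an isometry it maps perpendicular complements to perpendicular complements, so $g(U) = g((U^\perp)^\perp) = (g(U^\perp))^\perp = (U^\perp)^\perp = U$. This lets me define the restriction homomorphism $\Phi : H \to \GI(U)$ by $\Phi(g) = g|_U$. Next, because $V = U \oplus U^\perp$ as vector spaces and $g|_{U^\perp} = \id$, in block form one has $\det(g) = \det(g|_U) \cdot \det(g|_{U^\perp}) = \det(g|_U)$. Since the characteristic-2 symmetric case is excluded, $\SI$ on both sides is simply the intersection of isometries with $\SL$, and so $\Phi$ takes values in $\SI(U)$.

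Injectivity of $\Phi$ is immediate from the direct-sum decomposition: $g$ is determined by its restrictions to $U$ and $U^\perp$, and the latter is fixed at $\id$. For surjectivity, given $h \in \SI(U)$ I would define $g \in \GL(V)$ by $g(u + u') := h(u) + u'$ for $u \in U$ and $u' \in U^\perp$. A one-line check using $B(U,U^\perp) = 0$ and the isometry property of $h$ shows $B(g(u+u'), g(v+v')) = B(h(u),h(v)) + B(u',v') = B(u,v) + B(u',v') = B(u+u', v+v')$, so $g \in \GI(V)$; and $\det(g) = \det(h) = 1$, so $g \in \SI(V) \cap H$ with $\Phi(g) = h$.

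There is essentially no obstacle here: the only subtlety is remembering that the characterization $\SI = \GI \cap \SL$ needed in the determinant step fails precisely in the excluded case $\chr(F)=2$ with $B$ symmetric, where $\SO(V)$ is defined via the Dickson invariant. Since that case is explicitly ruled out by hypothesis, the determinant argument applies uniformly across the remaining bilinear and Hermitian settings, and the proof is essentially a bookkeeping exercise in the orthogonal decomposition $V = U \operp U^\perp$.
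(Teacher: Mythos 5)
Your proof is correct and is essentially the paper's argument: the paper defines the extension map $\phi(g) = g \oplus \id_{U^\perp}$ from $\SI(U)$ onto $H$ and leaves the verification to the reader, while you construct its inverse (restriction to $U$) and supply exactly the checks — preservation of $U$, multiplicativity of the determinant across $V = U \operp U^\perp$, and the isometry computation — that the paper's ``one verifies easily'' elides. The only implicit point worth noting is that your use of $(U^\perp)^\perp = U$ relies on the form being non-degenerate on all of $V$, which is the standing assumption in the classical-group setting where this lemma is applied.
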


\begin{proof}
Let $g$ be in $\SI(U)$. We define $\phi(g):=g\oplus \id_{U^\perp}$. One verifies easily that $\phi$ maps $\SI(U)$ isomorphically onto $H$.
\end{proof}

\begin{lemma}\label{lem:RestrictedGroupOfOrthogonalIsometriesOne}
Let $V$ be a finite dimensional vector space over a field $F$ of odd characteristic with non-degenerate symmetric bilinear form $(\cdot,\cdot)$. Let $U$ be a non-degenerate subspace. Then the subgroup $H:=\set{g\in \Omega(V)}{g|U^\perp=\id_{U^\perp}}$ of $\Omega(V)$ is isomorphic to $\Omega(U)$.
\end{lemma}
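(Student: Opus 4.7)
The plan is to upgrade the isomorphism produced by Lemma~\ref{lem:RestrictedGroupOfIsometries}. In odd characteristic $\SI(U)=\SO(U)$, so the previous lemma supplies an isomorphism
\[
\phi \colon \SO(U) \to \tilde{H} := \set{g\in\SO(V)}{g|_{U^\perp}=\id_{U^\perp}}, \qquad g \mapsto g\oplus\id_{U^\perp}.
\]
It therefore suffices to show that $\phi$ restricts to an isomorphism $\Omega(U)\to H$. Note that $H = \tilde{H}\cap\Omega(V)$, since $\Omega(V)\subseteq\SO(V)$.

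One inclusion is formal: because $\Omega(U)=\SO(U)'$ and $\phi$ is a group isomorphism, $\phi(\Omega(U))=\phi(\SO(U))'=\tilde{H}'$, and $\tilde{H}'\subseteq \tilde{H}\cap\SO(V)'=\tilde{H}\cap\Omega(V)=H$. For the converse, fix $g\in H$, write $g=\phi(g_U)$ for the unique $g_U\in\SO(U)$, and show $g_U\in\Omega(U)$. Here I invoke the spinor norm $\theta_W\colon\SO(W)\to F^\times/(F^\times)^2$ attached to any non-degenerate orthogonal space $W$ over $F$ of odd characteristic: by the Cartan-Dieudonn\'e theorem every element of $\SO(W)$ admits a decomposition $r_{w_1}\cdots r_{w_{2k}}$ into reflections in anisotropic vectors, $\theta_W$ is determined by $\theta_W(r_{w_1}\cdots r_{w_{2k}})=\prod_i(w_i,w_i)\cdot(F^\times)^2$, and in odd characteristic $\Omega(W)=\ker\theta_W$. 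Writing $g_U=r_{v_1}\cdots r_{v_{2k}}$ with $v_i\in U$ anisotropic and extending each $r_{v_i}$ to the reflection on $V$ with the same axis (acting trivially on $U^\perp$) exhibits $g=\phi(g_U)$ as the same product in $\SO(V)$. Hence $\theta_V(g)=\prod_i(v_i,v_i)\cdot(F^\times)^2=\theta_U(g_U)$, and since $g\in\Omega(V)=\ker\theta_V$ we obtain $\theta_U(g_U)=1$, i.e.~$g_U\in\Omega(U)$, as required.

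The main obstacle is the reliance on the identification $\Omega(W)=\ker\theta_W$ and the naturality $\theta_V(g_U\oplus\id_{U^\perp})=\theta_U(g_U)$: these are classical but non-trivial facts specific to odd characteristic (rather than formal manipulations). Very small-dimensional cases, such as $\dim U\leq 2$ where $\SO(U)$ is abelian and $\Omega(U)$ is trivial, fall outside the generic Cartan-Dieudonn\'e/spinor-norm framework and need to be verified by hand; however there the statement is tautological, since both $\Omega(U)$ and $H$ are trivial.
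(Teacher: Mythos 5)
Your proof is correct and follows essentially the same route as the paper: both decompose $g|_U$ into reflections via Cartan--Dieudonn\'e, extend each reflection by the identity on $U^\perp$, and compare spinor norms using $\Omega=\SO\cap\ker(\theta)$ (your separate verification of the inclusion $\phi(\Omega(U))\subseteq H$ via commutator subgroups is a harmless refinement that the paper leaves implicit, since its spinor-norm computation is reversible). The only inaccuracy is your closing remark: for $\dim U=2$ the group $\Omega(U)=\GO(U)'$ need not be trivial even though $\SO(U)$ is abelian, but this low-dimensional edge case is equally untreated in the paper's proof, so it does not distinguish the two arguments.
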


\begin{proof}
Let $g\in H$. Because $g|U\in \GO(U)$, it can can be written as a product of reflections $s_{u_1},\ldots,s_{u_k}$ in $\GO(U)$, where the reflection is along the hyperplane $\left<u_i\right>^\perp$ (and $u_i$ non-degenerate). In particular $u_i\in U$ for all $i$.
Each reflection $s_{u_i}$ is given explicitly by the expression
\[s_{u_i}(w)=w-Q(u_i)^{-1}(w,u_i)u_i,\]
where $Q$ is the associated quadratic form. From orthogonality we deduce that $s^V_{u_i}:=s_{u_i}\oplus \id_{U^\perp}$ is a reflection in $\GO(V)$.

By Lemma~\ref{lem:RestrictedGroupOfIsometries} we know that $g|U\in \SO(V)$.
By \cite{grove02}, Theorem~9.7
\[\Omega(V)=\SO(V)\cap \ker(\theta),\]
where $\theta$ is the spinor norm. (Confer \cite{grove02}, Chapter~9, pp.~75, 76.) We see
\[1=\theta(g)=\theta(s^V_{u_1}\ldots s^V_{u_k})=Q(u_1)\cdot \ldots\cdot Q(u_k)F^{\times 2}=\theta(s_{u_1}\ldots s_{u_k})=\theta(g|U)\]
and conclude $g|U\in \Omega(U)$. The claim follows.
\end{proof}

\begin{lemma}\label{lem:RestrictedGroupOfOrthogonalIsometriesTwo}
 Let $V$ be a vector space of dimension $n$ over a perfect (e.g. finite) field of characteristic $2$. Let $Q$ be a regular (i.e. $\dim(\rad(V))\in\{0,1\}$) quadratic form on $V$ and $B=(\cdot,\cdot)$ the associated bilinear form. Let $U$ be a regular subspace of $V$ and $H:=\set{g\in \Omega(V)}{g|U^\perp=\id_{U^\perp}}$. Then $H$ is isomorphic to $\Omega(U)$.
\end{lemma}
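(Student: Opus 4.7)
The plan is to mirror the argument in Lemma~\ref{lem:RestrictedGroupOfOrthogonalIsometriesOne}, substituting the characteristic-$2$ analogs for the structural invariants of the orthogonal group: the sign character is replaced by the Dickson invariant $D: \GO(V) \to \mathbb{F}_2$, whose kernel is $\SO(V)$ by the definition recalled in Section~\ref{sec:UltraproductsOfFiniteSimpleGroups}, and the spinor norm is replaced by its characteristic-$2$ counterpart $\theta: \SO(V) \to F/\wp(F)$ with $\wp(x) = x^2 + x$, whose kernel inside $\SO(V)$ equals $\Omega(V) = \SO(V)'$.

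First, regularity of $Q|_U$ together with perfectness of $F$ yields an orthogonal decomposition $V = U \operp U^\perp$ of quadratic spaces. Consequently every $g \in H$ automatically preserves $U$, the restriction $g \mapsto g|_U$ is an injective group homomorphism $H \to \GO(U)$, and its inverse on the image is $h \mapsto h \oplus \id_{U^\perp}$. It therefore suffices to show that $h \oplus \id_{U^\perp}$ lies in $\Omega(V)$ if and only if $h$ lies in $\Omega(U)$.

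By the Cartan-Dieudonné theorem in characteristic $2$ over a perfect field, any $h \in \GO(U)$ factors as $h = s_{u_1} \cdots s_{u_k}$ into symmetries $s_{u_i}(w) = w + Q(u_i)^{-1}(w, u_i) u_i$ along anisotropic $u_i \in U$. Each $s_{u_i}$ extends by $\id_{U^\perp}$ to a symmetry $s^V_{u_i}$ of $V$, and the defining formula shows that $D(s^V_{u_i}) = 1 = D(s_{u_i})$ and $\theta(s^V_{u_i}) = Q(u_i) + \wp(F) = \theta(s_{u_i})$. Multiplicativity of $D$ and $\theta$ then yields $D(h \oplus \id_{U^\perp}) = D(h)$ and $\theta(h \oplus \id_{U^\perp}) = \theta(h)$, so the extension lies in $\Omega(V) = \SO(V) \cap \ker(\theta)$ exactly when $h$ lies in $\Omega(U) = \SO(U) \cap \ker(\theta)$, completing the identification.

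The main obstacle is invoking the characteristic-$2$ quasi-determinant and its kernel-characterization of $\Omega$ inside $\SO$; this is a non-trivial but standard substitute for the odd-characteristic spinor norm used in the proof of Lemma~\ref{lem:RestrictedGroupOfOrthogonalIsometriesOne}, and once it is available the argument transports formally from odd to even characteristic.
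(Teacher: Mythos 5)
Your argument covers only the non-defective branch of the lemma, and in that branch it coincides with the paper's proof (restrict to $U$, factor $g|U$ into orthogonal transvections $t_{u_i}(w)=w+Q(u_i)^{-1}(w,u_i)u_i$ via Grove's Theorem~14.16, extend each by $\id_{U^\perp}$, and track the Dickson invariant and the characteristic-$2$ spinor norm through $\Omega(V)=\SO(V)\cap\ker(\theta)$). The gap is that the hypothesis explicitly allows $\dim(\rad(V))=1$ and allows $U$ itself to be defective, and your opening claim --- that regularity of $Q|_U$ plus perfectness of $F$ yields $V=U\operp U^\perp$ --- is false in those situations. If $U$ is defective then $U\cap U^\perp=\rad(U)\neq 0$, so the sum is not direct and the map $h\mapsto h\oplus\id_{U^\perp}$ is not even well defined as stated; if $V$ is defective then $\rad(V)\subseteq U^\perp$ and $U^{\perp\perp}=U+\rad(V)$, so it is not automatic that an element fixing $U^\perp$ pointwise preserves $U$. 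Moreover, in the defective case the tools you invoke break down: generation of $\GO$ by orthogonal transvections and the spinor-norm description of $\Omega$ are the non-defective statements in Grove, so the argument does not ``transport formally'' there.

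The paper handles this by splitting on $\dim(\rad(V))$: when $V$ is defective it invokes Grove's Theorem~14.2, which identifies $\GO(V)$ with $\Sp(V_1)$ for a complement $V_1$ of $\rad(V)$ (with trivial action on the radical), and then the elementary extension argument of Lemma~\ref{lem:RestrictedGroupOfIsometries} applies whether or not $U$ is defective; only when $V$ is non-defective does it run the transvection/spinor-norm computation you describe. To repair your proof you need to add this defective case (or an equivalent reduction to the symplectic group), and in the non-defective case you should either justify the direct-sum decomposition when $U$ is non-defective or deal separately with defective $U$.
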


\begin{proof}
We note that $H$ is isomorphic to a subgroup of $\GO(U)$ by $g\mapsto g|U$.  We divide the proof according to the dimension of the radical of $V$ and assume first that $V$ is defective. Then by Theorem~14.2 in \cite{grove02} $\GO(V)$ is isomorphic to $\Sp(V_1)$ for a complement $V_1$ of $\rad(V)$ and the action of $\GO(V)$ on $\rad(V)$ is trivial. We see that whether $U$ is defective or not, the proof of Lemma~\ref{lem:RestrictedGroupOfIsometries} applies.

 Now assume that $V$ is non-defective. By \cite{grove02}, Proposition~14.23 
\[\Omega(V)=\SO(V)\cap \ker(\theta),\]
where $\theta$ is the spinor norm. Note that $\SO(V)$ is the kernel of the Dickson invariant $\delta:\GO(V)\rightarrow \mathbb{F}_2$, or equivalently the subgroup of all products of an even number of orthogonal transvections. Now if $g\in H$, then $g|U\in \GO(U)$ and hence is a product of transvections $t_{u_1},\ldots, t_{u_k}$, see \cite{grove02}, Theorem~14.16. Each orthogonal transvection is described explicitly by
\[t_{u_i}(w)=w+Q(u_i)^{-1}(w,u_i)u_i.\]
We implicitly used that none of the vectors $u_i$ is singular. By extending $g|U$ to the whole of $V$ as in Lemma~\ref{lem:RestrictedGroupOfOrthogonalIsometriesOne}, $k$ is necessarily even. The proof now continues as in Lemma~\ref{lem:RestrictedGroupOfOrthogonalIsometriesOne}.
\end{proof}

\subsection{Normal subgroups of ultraproducts of finite simple groups}\label{ssec:NormalSubgroupsOfUltraproductsOfFiniteSimpleGroups}
In \cite{ellisetal08} the following result was proved.

\begin{theorem}[\cite{ellisetal08}, Theorem~1.1]\label{thm:TheoremOfEllisEtAl}
 Let $\mathfrak{u}$ be a non-principal ultrafilter on the natural numbers. Then the set of normal subgroups of $\prod_{\mathfrak{u}}A_n$ is linearly ordered.
\end{theorem}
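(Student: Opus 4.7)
The plan is to prove that for any two non-trivial elements $\bs{g},\bs{h}\in\bs{A}:=\prod_{\mathfrak{u}}A_n$, one lies in the normal closure of the other. This immediately delivers linearity of the lattice: if $\bs{M}_1,\bs{M}_2\trianglelefteq\bs{A}$ satisfy $\bs{M}_1\not\subseteq\bs{M}_2$, pick $\bs{g}\in\bs{M}_1\setminus\bs{M}_2$; for every $\bs{h}\in\bs{M}_2$ the alternative $\bs{g}\in N(\bs{h})\subseteq\bs{M}_2$ is excluded by the choice of $\bs{g}$, so we are forced into $\bs{h}\in N(\bs{g})\subseteq\bs{M}_1$, whence $\bs{M}_2\subseteq\bs{M}_1$.

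To establish the element-by-element comparison, represent $\bs{g},\bs{h}$ by sequences $(g_n),(h_n)$ of non-trivial permutations. The ultralimit $\lim_{\mathfrak{u}}\lh(g_n)/\lh(h_n)$ lies in $[0,\infty]$, so after possibly swapping $\bs{g}$ and $\bs{h}$ there is an integer $K$ with $\lh(h_n)\leq K\lh(g_n)$ for $\mathfrak{u}$-almost every $n$. Theorem~\ref{thm:EquivalentLengthFunctionsInSnAndAn} translates this into $\lc(h_n)\leq K'\lc(g_n)$ for some adjusted constant $K'$. When $\lim_{\mathfrak{u}}\lc(g_n)>0$, the Liebeck-Shalev bound $C(g_n)^{\lceil c/\lc(g_n)\rceil}=A_n$ from \cite{liebeckshalev01}, used already in Proposition~\ref{prop:MetricUltraproductsOfFiniteSimpleGroups}, has a uniformly bounded exponent, so $h_n$ lies in a bounded number of conjugates of $g_n^{\pm 1}$ and $\bs{h}\in N(\bs{g})$ follows at once.

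The main obstacle is the regime in which $\lh(g_n),\lh(h_n)\to 0$ along $\mathfrak{u}$: then the Liebeck-Shalev exponent $c/\lc(g_n)$ diverges with $n$ and the crude covering $C(g_n)^{\lceil c/\lc(g_n)\rceil}=A_n$ is no longer useful because it produces an unbounded word length. What is needed is a sharpening that tracks $h_n$ rather than the whole group, namely the following uniform statement: for every $K$ there exists $M=M(K)$ such that in every $A_n$, any $g,h\in A_n\setminus\{1\}$ with $|\operatorname{supp}(h)|\leq K|\operatorname{supp}(g)|$ satisfy $h\in(C(g)\cup C(g^{-1}))^M$. My plan for this auxiliary fact is combinatorial: decompose $h$ into $O(|\operatorname{supp}(h)|)$ three-cycles, partition them into $O(K)$ batches of total support comparable to $|\operatorname{supp}(g)|$, realise each batch as one conjugate of $g$ modulo a low-support error (using the transitivity of $A_n$ on configurations of fixed cycle type), and then absorb the accumulated error by a bounded number of commutators of conjugates of $g$ via the classical three-cycle commutator calculus inside $A_n$. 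Matching cycle types batchwise without letting $M$ blow up is the technical heart of the argument and is exactly the combinatorial content supplied by \cite{ellisetal08}.
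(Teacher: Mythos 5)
Your global architecture is the right one and is exactly the scheme this paper uses for its generalizations: reduce linear ordering of normal subgroups to linear ordering of normal closures (your lattice argument is Lemma~\ref{lem:OrderOfNormalSubgroupsAndOrderOfNormalClosures} verbatim), compare elements coordinatewise by an invariant length, and convert a length inequality into bounded generation. The case $\lim_{\mathfrak{u}}\lc(g_n)>0$ is also handled correctly via Liebeck--Shalev. Note, however, that the paper does not prove this particular theorem at all: it is quoted from Ellis et al., with the remark that it could be reproved from Sections~2 and~3.

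The genuine gap is in the regime $\lh(g_n)\to_{\mathfrak{u}}0$. Your uniform claim (for every $K$ there is $M(K)$ with $h\in(C(g)\cup C(g^{-1}))^{M}$ whenever $|\operatorname{supp}(h)|\leq K|\operatorname{supp}(g)|$) is true and is precisely what is needed, but you do not prove it: the decisive step is explicitly outsourced to \cite{ellisetal08}, which is circular here since that is the very theorem under discussion. Moreover, the sketch you give would not go through as stated. A single conjugate of $g$ has exactly the cycle type of $g$, so a batch of three-cycles extracted from $h$ (say $\lvert\operatorname{supp}(g)\rvert/3$ disjoint three-cycles, while $g$ is one long cycle) differs from \emph{every} conjugate of $g$ by a permutation whose support is comparable to $|\operatorname{supp}(g)|$, not a ``low-support error''; one must work with products of at least two conjugates of $g^{\pm1}$ per batch, and controlling the error terms then requires the full combinatorial analysis you are deferring. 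The paper's own toolkit gives a much shorter route, parallel to Lemma~\ref{lem:OrderAndClassConnectionTwo}: set $S:=\operatorname{supp}(g_n)\cup\operatorname{supp}(h_n)$, so $|S|\leq(1+K)|\operatorname{supp}(g_n)|$ (enlarge $S$ by a few points so $|S|\geq 5$); then $g_n|_S$ and $h_n|_S$ lie in $\operatorname{Alt}(S)$, the Hamming length of $g_n|_S$ there is at least $1/(1+K)$, Theorem~\ref{thm:EquivalentLengthFunctionsInSnAndAn} bounds its conjugacy length below by a constant depending only on $K$, Liebeck--Shalev gives $C_{\operatorname{Alt}(S)}(g_n|_S)^{M}=\operatorname{Alt}(S)$ with $M=M(K)$, and extending the conjugators by the identity outside $S$ (as in Lemma~\ref{lem:RestrictedGroupOfIsometries}) yields $h_n\in C_{A_n}(g_n)^{M}$. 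Replacing your combinatorial paragraph by this restriction argument closes the gap.
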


Another formulation of this statement can be found in \cite{allsupkaye07} as Theorem~3.

As we start to develop the generalization of Theorem~\ref{thm:TheoremOfEllisEtAl} we recall the following standard principle, used when working with ultrafilters. Let $\bs{G}=\prod_\mathfrak{u}G_i$ be an ultraproduct of (not necessarily) groups, where $\mathfrak{u}$ is an ultrafilter on the index set $I$. If $U\in\mathfrak{u}$, then $\bs{G}$ is isomorphic to $\prod_{\mathfrak{u}|U}G_i$, where $\mathfrak{u}|U:=\set{J\cap U}{J\in\mathfrak{u}}$, and if $U_1\cup \ldots \cup U_k=I$, then there is $j$ such that $U_j\in \mathfrak{u}$. Hence combining both situations $\bs{G}$ is isomorphic to $\prod_{\mathfrak{u}|U_j}G_i$.

In our situation we can exploit this and treat different cases seperately. Given an ultraproduct $\bs{G}$ of arbitrary finite simple groups, the chosen ultrafilter 
``decides'' whether $\bs{G}$ is isomorphic to an ultraproduct of e.g. groups of bounded or unbounded rank, permutation groups or groups of Lie type, or in the case of groups of Lie type of large rank, which type of classical group they belong to. As a consequence there will be in particular no further treatment of sporadic groups and exceptional groups of Lie type apart from Proposition~\ref{prop:BoundedRankUltraproductIsSimple}, because they are finite or of bounded rank.

As a further example note also that we could replace the groups $A_n$ in Theorem~\ref{thm:TheoremOfEllisEtAl} by groups $A_{m_n}$. Then either $m_n\leq K$ for $\mathfrak{u}$-almost all $n$ and a constant $K$, and the ultraproduct itself is isomorphic to one $A_{m_n}$. Or $m_n\rightarrow_\mathfrak{u}\infty$ and in this case Theorem~\ref{thm:TheoremOfEllisEtAl} can be proved exactly as in \cite{ellisetal08}.

We obtain the following proposition for groups of Lie type almost instantly.

\begin{proposition}\label{prop:BoundedRankUltraproductIsSimple}
Let $G_n$ be finite simple groups of Lie type for all $n\in \mathbb{N}$. Let $\mathfrak{u}$ be a non-principal ultrafilter on the natural numbers. If $\bs{G}=\prod_{\mathfrak{u}}G_n$ and the rank of the groups $G_n$ is bounded, then $\bs{G}$ is simple.
\end{proposition}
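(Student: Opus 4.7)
My plan is to reduce via an ultrafilter refinement to a single Lie type and then obtain a uniform positive lower bound on the conjugacy length $\lc$ of non-identity elements; once this is in hand, the proof of Proposition~\ref{prop:MetricUltraproductsOfFiniteSimpleGroups} upgrades from ``$\bs{G}/\bs{N}$ is simple'' to ``$\bs{G}$ itself is simple''.

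Since the rank is bounded by some $r$, the classification yields only finitely many Lie types (Dynkin diagram plus twisting) of rank at most $r$. Using the refinement principle $\bs{G}\cong\prod_{\mathfrak{u}|U}G_n$ recalled just before the proposition, I may assume without loss of generality that $G_n=X(q_n)$ for a single fixed Lie type $X$ and a sequence of prime powers $q_n$. If $(q_n)$ is $\mathfrak{u}$-bounded then only finitely many groups $X(q_n)$ occur, and a further refinement makes the $G_n$ all equal to a fixed finite simple group $G_0$, whence $\bs{G}\cong G_0$ is simple. The remaining case is $q_n\rightarrow_\mathfrak{u}\infty$.

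In that case the standard order formula gives $\log|X(q)|=d_X\log q+O_X(1)$, where $d_X$ is the dimension of the underlying simple algebraic group. For any $1\neq g\in X(q)$, the centralizer $C_{X(q)}(g)$ sits inside the $\mathbb{F}_q$-points of the algebraic centralizer, which is a proper closed subgroup of dimension at most $d_X-1$; hence
\[\log|C_{X(q)}(g)|\leq (d_X-1)\log q+O_X(1),\]
the bounded-index passage between a finite simple group of Lie type and the corresponding group of $\mathbb{F}_q$-points being absorbed in the $O_X(1)$ term. Substituting into the definition of $\lc$ yields
\[\lc(g)=1-\frac{\log|C_{X(q)}(g)|}{\log|X(q)|}\geq \frac{1}{d_X}+o(1)\qquad(q\to\infty),\]
and choosing $c_0$ slightly smaller than the minimum of $1/d_X$ over the finitely many relevant types furnishes a uniform constant such that $\lc(g)\geq c_0$ for every non-identity $g\in G_n$ with $n$ large along $\mathfrak{u}$.

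Consequently, for any $\bs{g}\neq 1$ in $\bs{G}$ with representative $(g_n)$, the set $\{n:g_n\neq 1\}$ lies in $\mathfrak{u}$ and $\lc(\bs{g})=\lim_\mathfrak{u}\lc(g_n)\geq c_0>0$. The argument in the proof of Proposition~\ref{prop:MetricUltraproductsOfFiniteSimpleGroups} (invoking Theorem~1.1 of \cite{liebeckshalev01}) then yields $N(\bs{g})=\bs{G}$, so $\bs{G}$ has no non-trivial proper normal subgroups. The main obstacle is the uniform lower bound on $\lc$: boundedness of the rank is precisely what keeps $d_X$ bounded, and hence $1/d_X$ bounded away from zero, so that non-identity conjugacy classes in $G_n$ cannot become negligibly small relative to the order of $G_n$.
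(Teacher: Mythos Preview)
Your proof is correct and follows the same overall strategy as the paper: establish a uniform positive lower bound on $\lc(g)$ for non-identity $g$ in the $G_n$, and then invoke Liebeck--Shalev (Theorem~1.1 of \cite{liebeckshalev01}) exactly as in Proposition~\ref{prop:MetricUltraproductsOfFiniteSimpleGroups} to conclude $N(\bs{g})=\bs{G}$ for every $\bs{g}\neq 1$.

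Where you differ is in how you obtain that lower bound. You first refine along the ultrafilter to a single Lie type, split into the cases of bounded and unbounded $q_n$, and then bound centralizers of non-central elements via the dimension of the algebraic centralizer, obtaining $\lc(g)\geq 1/d_X+o(1)$. The paper instead argues in one stroke, without any refinement or case distinction: for a rank bound $N$, one has $|G_n|\leq q^{c'N^2}$ for a universal $c'$, while every non-trivial conjugacy class contains at least $q$ elements, so $\lc(g)\geq 1/(c'N^2)$ uniformly. Your route yields a sharper constant ($1/d_X$ rather than $1/(c'N^2)$) at the cost of invoking structural facts about algebraic-group centralizers and Lang--Weil type point counts; the paper's route is entirely elementary once the crude order bounds are granted. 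Either works, but note that your appeal to ``the algebraic centralizer is a proper closed subgroup of dimension at most $d_X-1$'' tacitly uses that the $\mathbb{F}_q$-point count of such a subgroup is $O_X(q^{d_X-1})$ uniformly over all non-central $g$, which is true but is itself a non-trivial input.
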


 \begin{proof}
Suppose that the rank of the groups in question is bounded by $N$. Let $1\neq g\in G_n$. Using the constant $c$ of Theorem~1.1 in \cite{liebeckshalev01} we see that for
\[m\geq \frac{c\log|G_n|}{\log|C(g)|}\]
already $C(g)^m=G_n$. When $G_n$ is a group over the field $\mathbb{F}_q$, its order is at most $q^{c'N^2}$ for a universal constant $c'$. On the other hand a non-trivial conjugacy class in $G_n$ has at least $q$ elements. Hence it suffices for $m$ to be larger than $c'N^2$ to ensure $C(g)^m=G_n$ for any $n$.

If we choose $1\neq\bs{g}\in \bs{G}$ arbitrarily, then $C(g_n)^m=G_n$ for $\mathfrak{u}$-almost all $n$. Hence $C(\bs{g})^m=\bs{G}$ and consequently $N(\bs{g})=\bs{G}$. Therefore $\bs{G}$ contains no proper normal subgroups, whence it is simple.
\end{proof}

We take Theorems~\ref{thm:TheoremOfEllisEtAl} and Proposition \ref{prop:BoundedRankUltraproductIsSimple} as a motivation to prove the following  more general Theorem~\ref{thm:NormalSubgroupsOfUltraproductsOfFiniteSimpleGroups}. In the proof we follow a similar route as the authors of \cite{ellisetal08} in the proof of their Theorem~1.1.

\begin{theorem}\label{thm:NormalSubgroupsOfUltraproductsOfFiniteSimpleGroups}
Let $G_n$ be finite simple groups of Lie type for all $n\in \mathbb{N}$. If $\bs{G}:=\prod_{\mathfrak{u}}G_n$, then the set $\mathfrak{N}$ of normal subgroups of $\bs{G}$ is totally ordered.
\end{theorem}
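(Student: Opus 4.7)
The plan is to use the ultrafilter to reduce to a single classical family of unbounded rank, to establish a uniform absorbency statement at the level of the finite groups, and to deduce linear ordering from it.

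First I would reduce the problem using the ultrafilter principle recalled before the theorem. The index set is partitioned according to the Lie type of $G_n$ (classical $\PSL$, $\PSp$, $\PSU$, $\mathrm{P}\Omega^{\pm}$, or one of the finitely many exceptional types) and according to whether the rank is bounded; exactly one class belongs to $\mathfrak{u}$, and we may restrict $\mathfrak{u}$ to that class without changing $\bs{G}$. If the rank is bounded, Proposition~\ref{prop:BoundedRankUltraproductIsSimple} gives that $\bs{G}$ is simple, so $\mathfrak{N}=\{1,\bs{G}\}$ is trivially linearly ordered. This disposes of the exceptional families, and it remains to treat a single classical family with $\rank(G_n)\to \infty$ along $\mathfrak{u}$.

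The heart of the argument is the following uniform absorbency lemma: there is a constant $K$, depending only on the fixed family, such that for every $n$ and every non-trivial $g,h\in G_n$ with $\lj(g)\le \lj(h)$ one has $g\in (C(h)\cup C(h^{-1}))^K$. To prove it, let $V=F^n$ be the natural module. The element $h$ has an eigenspace $E_h$ of dimension $m_h=(1-\lj(h))n$; Lemma~\ref{lem:ExtensionToNonDegenerateSubspace} produces a non-degenerate subspace $V_h\subseteq V$ of dimension $\lj(h)n+O(1)$ supporting $h$, and analogously a non-degenerate $V_g\subseteq V$ with $\dim V_g\le \dim V_h$. Working projectively we may assume $g$ and $h$ act as the identity on $V_g^\perp$ and $V_h^\perp$ respectively. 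By Witt's theorem a conjugate $h':=shs^{-1}$ has support contained in a prescribed non-degenerate $U\supseteq V_g$ of dimension $\dim V_h$, and by Lemmas~\ref{lem:RestrictedGroupOfIsometries}, \ref{lem:RestrictedGroupOfOrthogonalIsometriesOne}, and \ref{lem:RestrictedGroupOfOrthogonalIsometriesTwo} the pointwise stabiliser $H$ of $U^\perp$ in $G_n$ is a classical group of the same family acting on $U$, and both $g$ and $h'$ lie in $H$. Inside $H$, the element $h'$ has $\alpha$-eigenspace of bounded codimension in $U$ by construction, so $\lj_H(h')$ is bounded below by a uniform constant; by Theorem~\ref{thm:EquivalentLengthFunctionsInGeneralAndSpecialLinearGroups} (and its analogues for $\PSp,\PSU$ and $\mathrm{P}\Omega^{\pm}$), so is $\lc_H(h')$. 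Liebeck--Shalev applied to $h'$ in $H$ then gives $(C_H(h'))^K=H$ for a uniform $K$, and in particular $g\in (C_H(h'))^K\subseteq (C(h))^K$.

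Given the uniform absorbency, the theorem is immediate. Let $\bs{M}_1,\bs{M}_2\in \mathfrak{N}$ with $\bs{M}_1\not\subseteq \bs{M}_2$, pick $\bs{g}\in \bs{M}_1\setminus \bs{M}_2$ and any $\bs{h}\in \bs{M}_2$. Passing to a subset of $\mathfrak{u}$ I may assume that one of $\lj(g_n)\le \lj(h_n)$ or $\lj(h_n)\le \lj(g_n)$ holds $\mathfrak{u}$-almost everywhere. In the first case absorbency yields $\bs{g}\in (C(\bs{h})\cup C(\bs{h}^{-1}))^K\subseteq \bs{M}_2$, contradicting $\bs{g}\notin \bs{M}_2$; hence the second case holds and absorbency yields $\bs{h}\in (C(\bs{g})\cup C(\bs{g}^{-1}))^K\subseteq \bs{M}_1$. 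As $\bs{h}\in \bs{M}_2$ was arbitrary, $\bs{M}_2\subseteq \bs{M}_1$.

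The principal obstacle is the absorbency lemma. Liebeck--Shalev's unmodified bound $C(h)^m=G$ for $m\ge c/\lc(h)$ cannot produce a uniform $K$ when $\lc(h)\to 0$ along $\mathfrak{u}$, so a naive approach fails. The geometric descent from $G_n$ to the smaller classical subgroup $H$ acting on $U$ replaces a possibly tiny $\lc_{G_n}(h)$ by a quantity $\lc_H(h')$ bounded away from $0$, precisely what is required for a uniform constant. Making this descent compatible with each of the four classical families --- and in particular the characteristic-$2$ orthogonal case, where one must work with the Dickson invariant --- is the reason for the care taken in Lemmas~\ref{lem:LinearAndBilinearForms}--\ref{lem:RestrictedGroupOfOrthogonalIsometriesTwo}.
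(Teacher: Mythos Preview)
Your overall strategy matches the paper's: reduce via the ultrafilter to a single classical family of unbounded rank, establish a uniform absorbency statement by descending to a smaller classical subgroup where Liebeck--Shalev applies with bounded exponent, and deduce linear ordering from that. The final deduction (your last paragraph) is essentially Lemma~\ref{lem:OrderOfNormalSubgroupsAndOrderOfNormalClosures} combined with the dichotomy $\lj(g_n)\le\lj(h_n)$ versus $\lj(h_n)\le\lj(g_n)$ along $\mathfrak{u}$; the paper packages this via the quasiorder $\preceq$ and Corollary~\ref{cor:QuasiOrderOfGroupElements}, but the content is the same.

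The gap is in the proof of your absorbency lemma. You build a non-degenerate support $V_h$ for $h$ from its dominant eigenspace alone and then assert that, after conjugating into $U$, the restriction $h'|_U$ has $\lj_H$ bounded below. This can fail outright: if $h$ has exactly two eigenvalues, $1$ with multiplicity $m_h$ and $\alpha$ with multiplicity $n-m_h$, and $\ker(1-h)$ happens to be non-degenerate, then your $V_h=\ker(1-h)^\perp$ is precisely the $\alpha$-eigenspace and $h|_{V_h}=\alpha\cdot\id$ is central in $\SI(V_h)$, so $\lj_H(h')=0$ and Lemma~\ref{lem:BoundedGenerationInTermsOfJordanLength} gives nothing. (Indeed, your phrase ``$\alpha$-eigenspace of bounded codimension in $U$'' would force $\lj_H$ to be \emph{small}, not bounded below.) There are secondary issues as well: $\dim V_h$ lies between $n-m_h$ and $2(n-m_h)$, not $(n-m_h)+O(1)$, so $\lj(g)\le\lj(h)$ does not give $\dim V_g\le\dim V_h$; and invoking Witt requires $U$ and $V_h$ to be isometric, which in the orthogonal case means controlling the Witt type of $U$.

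The paper's Lemma~\ref{lem:OrderAndClassConnectionTwo} sidesteps all of this by a different construction: rather than building supports for $g$ and $h$ separately and then moving one by Witt, it sets $W=\ker(1-g)\cap\ker(1-h)$ and lets $U$ be the non-degenerate complement produced by Lemma~\ref{lem:ExtensionToNonDegenerateSubspace}. Both $g$ and $h$ then already act trivially on $U^\perp$, no conjugation step is needed, and because $U$ is forced to contain the part of $\ker(1-h)$ lying outside $\ker(1-g)$, the $1$-eigenspace of $h|_U$ is large enough that the dimension count $\dim U\le(2k+2)(n-m_h)$ yields $\lj_H(h|_U)\ge 1/(2k+2)$ directly.
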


In view of Proposition~\ref{prop:BoundedRankUltraproductIsSimple} we only need to take care of classical groups of unbounded rank.

First consider the general situation that we are given an ultraproduct $\bs{G}=\prod_{\mathfrak{u}}G_i$ of arbitrary groups $G_i$ with length function $\ell_i$. We define an ordering of the non-trivial elements of $\bs{G}$ by $\bs{g}\preceq \bs{h}$ if
\[\lim_{\mathfrak{u}} \frac{\ell_i(g_i)}{\ell_i(h_i)}<\infty.\]

\begin{lemma}\label{lem:OrderAndLengthConnection}
Let $\bs{g}$ and $\bs{h}$ be non-identity elements of the ultraproduct $\bs{G}$ of groups $G_i$. Then $\bs{g}\in N(\bs{h})$ implies $\bs{g}\preceq \bs{h}$. 
\end{lemma}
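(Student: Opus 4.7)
The plan is to unpack the definition of $N(\bs{h})$ and then use invariance together with the triangle inequality, term by term along the ultrafilter.

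First, I would use the fact that $\bs{g}\in N(\bs{h})=\langle \bs{h}^{\bs{G}}\rangle$ means that, in the ultraproduct $\bs{G}$, there is a finite word expressing $\bs{g}$ as a product of conjugates of $\bs{h}^{\pm 1}$. That is, there exist an integer $k\ge 1$, signs $\epsilon_1,\ldots,\epsilon_k\in\{\pm 1\}$, and elements $\bs{x}_1,\ldots,\bs{x}_k\in\bs{G}$ with
\[
\bs{g}=\prod_{j=1}^{k}\bs{x}_j\,\bs{h}^{\epsilon_j}\,\bs{x}_j^{-1}.
\]
Choosing representatives $(x_{j,i})_{i\in\mathbb{N}}$ for each $\bs{x}_j$, the same identity then holds on the nose in $G_i$ for $\mathfrak{u}$-almost all $i$, namely $g_i=\prod_{j=1}^{k} x_{j,i}\,h_i^{\epsilon_j}\,x_{j,i}^{-1}$.

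Next I would combine the axioms for $\ell_i$. By the triangle inequality (LF3), invariance of $\ell_i$ under conjugation, and (LF2) for the inverse, we get, for $\mathfrak{u}$-almost all $i$,
\[
\ell_i(g_i)\;\le\;\sum_{j=1}^{k}\ell_i\bigl(x_{j,i}\,h_i^{\epsilon_j}\,x_{j,i}^{-1}\bigr)\;=\;\sum_{j=1}^{k}\ell_i(h_i^{\epsilon_j})\;=\;k\,\ell_i(h_i).
\]
Hence the ratio $\ell_i(g_i)/\ell_i(h_i)$ is bounded by the fixed constant $k$ whenever $\ell_i(h_i)>0$, and the numerator vanishes whenever the denominator does, so under the standing convention for the ratio one obtains $\lim_{\mathfrak{u}}\ell_i(g_i)/\ell_i(h_i)\le k<\infty$, i.e.\ $\bs{g}\preceq\bs{h}$.

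I do not foresee a genuine obstacle: everything boils down to the fact that the length of a product of $k$ conjugates is at most $k$ times the length of $\bs{h}$, and $k$ is an honest integer coming from the algebraic (not metric) description of the normal closure. The only mild subtlety is that the length functions involved need to be invariant for the conjugates to drop out, but this is in force for all the examples ($\lc,\lh,\lr,\lj$) used elsewhere in the paper; the lemma is applied in exactly this invariant setting.
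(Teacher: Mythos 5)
Your argument is correct and is essentially identical to the paper's proof: both unpack $\bs{g}\in N(\bs{h})$ as a product of $k$ conjugates of $\bs{h}^{\pm1}$, transfer this identity to $\mathfrak{u}$-almost all coordinates, and apply the triangle inequality together with invariance and symmetry of $\ell_i$ to bound $\ell_i(g_i)\le k\,\ell_i(h_i)$, whence the ultralimit of the ratio is at most $k$. Your remark about the degenerate case $\ell_i(h_i)=0$ is a harmless extra precaution the paper leaves implicit.
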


\begin{proof}
If $\bs{g}\in N(\bs{h})$ there is some integer $k$ such that $\bs{g}$ is a product of $k$ conjugates of $\bs{h}^{\pm 1}$. Therefore 
$g_i$ is a product of $k$ conjugates of $h_i^{\pm 1}$ for $\mathfrak{u}$-almost all $i$. By the properties of invariant length functions
\[\ell_i(g_i)\leq k\ell_i(h_i^{\pm 1}) =k\ell_i(h_i) \alev{\mathfrak{u}}.\]
Hence \[\lim_{\mathfrak{u}}\frac{\ell_i(g_i)}{\ell_i(h_i)}\leq k\] follows and we are done.
\end{proof}

We want to show that for finite simple groups of Lie type and the Jordan length the converse of the previous lemma is true.
The following statement is a summary of results from \cite{liebeckshalev01}.

\begin{lemma}\label{lem:BoundedGenerationInTermsOfJordanLength}
 Let $G$ be a quasisimple group of Lie type of rank $n$ and $g\in G\setminus Z(G)$. There is a constant $c$, independent of $G$ and $g$, such that $C(g)^m=G$ for all $m\geq \frac{cn}{n-m_g}$.
\end{lemma}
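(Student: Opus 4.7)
The plan is to reduce to the main theorems of Section~5 of \cite{liebeckshalev01} via a case analysis on the type of $G$. The one nontrivial issue is matching the eigenspace-theoretic parameter $n-m_g$ with the Jordan-theoretic parameter used by Liebeck-Shalev; all the heavy lifting is already contained in their paper.

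First, for the exceptional quasisimple groups of Lie type the rank $n$ is bounded by an absolute constant (at most $8$). Since $g\notin Z(G)$ forces $m_g<n$, the target upper bound $cn/(n-m_g)$ is itself bounded by $cn\leq 8c$ in these cases. Thus Theorem~1.1 of \cite{liebeckshalev01}, which gives $C(g)^m=G$ for $m\geq c'\log|G|/\log|C(g)|$, combined with the elementary estimates $\log|G|=O(n^2\log q)$ and $\log|C(g)|\geq \log q$ for non-central $g$, yields the conclusion as soon as $c$ is chosen large enough to absorb the $O(n^2)$ factor arising from the bounded rank.

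Second, for the classical quasisimple groups---$\SL_n(q)$, $\Sp_n(q)$, $\SU_n(q)$, and $\Omega^{\pm}_n(q)$---we work in the natural module $V$ of $F$-dimension comparable to $n$. Lemma~5.4 of \cite{liebeckshalev01}, already quoted in the proof of Theorem~\ref{thm:EquivalentLengthFunctionsInGeneralAndSpecialLinearGroups}, establishes the claim for $\SL_n(q)$. The parallel statements proved in the same section of \cite{liebeckshalev01} for the remaining classical families supply the analogous bounds, in each case of the shape $C(g)^k=G$ for $k\geq cn/(n-m_g)$, with $c$ independent of the underlying prime power $q$.

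The main obstacle is to verify that the parameter figuring in the Liebeck-Shalev bounds---namely $m$, the maximal number of size-$1$ Jordan blocks of $g$ sharing a fixed eigenvalue, as recalled in the discussion preceding Theorem~\ref{thm:EquivalentLengthFunctionsInGeneralAndSpecialLinearGroups}---agrees with $n-m_g$ up to a bounded factor. This is exactly the estimate $m_g-m\leq (n-m)/2$ used there, which together with $m\leq m_g$ gives the two-sided comparison $(n-m)/2\leq n-m_g\leq n-m$. Replacing $c$ by $2c$ absorbs this discrepancy, and taking the maximum over the finitely many families of types (exceptional and classical) yields a single universal constant $c$ as required. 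For quasisimple groups that are not themselves simple, the bound passes from the simple quotient $G/Z(G)$ to $G$ by adjusting $c$ using the geometric embeddings of Lemmas~\ref{lem:RestrictedGroupOfIsometries}--\ref{lem:RestrictedGroupOfOrthogonalIsometriesTwo}, which let us realise elements of the center as bounded products of conjugates of $g$ inside an isometry subgroup acting on a non-degenerate piece of $V$.
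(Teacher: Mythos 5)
Your proposal is correct and follows essentially the same route as the paper, which simply cites the relevant Liebeck--Shalev results (Lemma~5.4 for $\SL_n(q)$, Lemma~6.4 for symplectic and orthogonal groups, and Section~7 for unitary groups); your observation that $m\leq m_g$, so that $n-m_g\leq n-m$ and the Liebeck--Shalev threshold is automatically met, is the right (and in fact the only needed, one-sided) reconciliation of the parameters, and your bounded-rank treatment of the exceptional types is a harmless addition. The final sentence about transferring the bound from $G/Z(G)$ to $G$ via the isometry-embedding lemmas is superfluous, since Liebeck and Shalev state their results for the quasisimple classical groups themselves.
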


\begin{proof}
For special linear groups use Lemma~5.4 in \cite{liebeckshalev01}, for symplectic and orthogonal groups Lemma~6.4 and for unitary groups Section~7 ibid.
\end{proof}

\begin{lemma}\label{lem:OrderAndClassConnectionTwo}
Let $\bs{G}=\prod_{\mathfrak{u}}G_n$ be an ultraproduct of finite simple groups of Lie type equipped with the Jordan length. Then $\bs{g}\preceq \bs{h}$ implies $\bs{g}\in N(\bs{h})$ for all non-trivial elements $\bs{g},\bs{h}\in\bs{G}$.
\end{lemma}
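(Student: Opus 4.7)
The plan is to exploit the hypothesis $\lj(g_n)\le K\lj(h_n)$ (for some constant $K$ and $\mathfrak{u}$-almost every $n$) to decompose $g_n$ into boundedly many factors, each living in a classical subgroup of $G_n$ in which a conjugate of $h_n$ has Jordan length bounded away from zero, and then to apply Lemma~\ref{lem:BoundedGenerationInTermsOfJordanLength} inside each such subgroup. First I would dispose of the easy case: if $\lj(h_n)\ge\delta>0$ on a set in $\mathfrak{u}$, Lemma~\ref{lem:BoundedGenerationInTermsOfJordanLength} directly yields $C(h_n)^{m}=G_n$ for $m=\lceil c/\delta\rceil$, so $g_n$ itself is a product of $m$ conjugates of $h_n$ and $\bs{g}\in N(\bs{h})$. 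Hence I will assume $\lj(h_n)\to_{\mathfrak{u}}0$, and set $e_n:=n-m_{h_n}$, $d_n:=n-m_{g_n}$, so $d_n\le Ke_n$ and both quantities are $o(n)$.

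The heart of the argument is a geometric decomposition step asserting that, $\mathfrak{u}$-almost everywhere, there is a factorisation
\[ g_n=g_{n,1}g_{n,2}\cdots g_{n,K'},\qquad K'\le K+O(1), \]
together with non-degenerate subspaces $U_{n,i}\subseteq V_n$ of dimensions between $e_n$ and $3e_n$, such that each $g_{n,i}$ lies in the subgroup $H_{n,i}\le G_n$ fixing $U_{n,i}^\perp$ pointwise, which by Lemmas~\ref{lem:RestrictedGroupOfIsometries}, \ref{lem:RestrictedGroupOfOrthogonalIsometriesOne}, and \ref{lem:RestrictedGroupOfOrthogonalIsometriesTwo} is a quasisimple classical group of the same type as $G_n$ acting on $U_{n,i}$. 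I would build this decomposition by writing $g_n$ as a product of $O(d_n)$ root elements (transvections for $\SL,\SU$, symplectic transvections for $\Sp$, or reflections and orthogonal transvections for $\GO,\Omega$), grouping them into $K'$ consecutive chunks of length $\Theta(e_n)$, and using Lemma~\ref{lem:ExtensionToNonDegenerateSubspace} to enlarge the span of the supports of each chunk to a non-degenerate $U_{n,i}$ of dimension at most $3e_n$.

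Given the factorisation, for each $i$ I would invoke Witt's extension theorem to find a $G_n$-conjugate $h_{n,i}'$ of $h_n$ whose non-scalar support lies inside $U_{n,i}$, so that $h_{n,i}'\in H_{n,i}$; its Jordan length inside $H_{n,i}$ is then at least $e_n/\dim U_{n,i}\ge 1/3$, uniformly in $n$. (When the eigenvalue of $h_n$ realising $m_{h_n}$ is not the identity one replaces $h_n$ first by a commutator $[h_n,x_n]$, which is a product of two conjugates of $h_n^{\pm1}$ and whose relevant eigenvalue is $1$.) Since $H_{n,i}$ is quasisimple of Lie type of rank tending to infinity, Lemma~\ref{lem:BoundedGenerationInTermsOfJordanLength} applied inside $H_{n,i}$ yields an absolute constant $M$ with $C_{H_{n,i}}(h_{n,i}')^M=H_{n,i}$; hence each $g_{n,i}$ is a product of $M$ conjugates of $h_{n,i}'$ in $H_{n,i}$, and thus of $O(M)$ conjugates of $h_n^{\pm1}$ in $G_n$. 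Combining, $g_n$ is a product of at most $O(K'M)$ conjugates of $h_n^{\pm1}$ for $\mathfrak{u}$-almost every $n$, and passing to the ultraproduct yields $\bs{g}\in N(\bs{h})$.

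The main obstacle is the geometric decomposition itself: producing the root-element factorisation of length $O(d_n)$ uniformly across the four classical types and promoting each chunk's support to a non-degenerate subspace by means of Subsection~\ref{ssc:GeometryOfFiniteQuasisimpleGroups}, together with the scalar-eigenvalue bookkeeping that ensures $h_n$ (or a short commutator) actually lands in $H_{n,i}$. Once these geometric ingredients are in place, the bounded generation step inside each $H_{n,i}$ is essentially automatic.
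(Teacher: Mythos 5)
Your overall strategy (factor $g_n$ into boundedly many pieces supported on small non-degenerate subspaces, conjugate $h_n$ into each, and apply Lemma~\ref{lem:BoundedGenerationInTermsOfJordanLength} locally) is genuinely different from the paper's and considerably heavier, and the two steps you defer are exactly where the gaps sit. First, the chunked root-element factorisation: even granting that $g_n$ is a product of $O(d_n)$ transvections or reflections uniformly across the four types, a chunk of such elements supported in $U_{n,i}$ need not lie in the relevant quasisimple group $\SI(U_{n,i})$ or $\Omega(U_{n,i})$ (determinant, Dickson-invariant and spinor-norm obstructions must be balanced chunk by chunk), and your assertion that $H_{n,i}$ has rank tending to infinity is false when $e_n$ stays bounded. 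Second, the Witt step: to place a $G_n$-conjugate of $h_n$ inside $H_{n,i}$ you need $U_{n,i}$ to contain an isometric copy of $\operatorname{im}(1-h_n)$; when that space is close to totally isotropic of dimension $e_n$, a non-degenerate $U_{n,i}$ of dimension between $e_n$ and $3e_n$ need not have large enough Witt index, and Witt's theorem in any case only supplies an isometry in $\GI(V_n)$, which need not induce conjugation inside $G_n$. Finally, the commutator replacement $[h_n,x_n]$ for a non-identity dominant eigenvalue is unjustified as stated, since you must show its Jordan length does not collapse; for isometry groups the only eigenvalue other than $1$ whose eigenspace can exceed dimension $n/2$ is $-1$, which is central, so the correct fix is a central twist in the quasisimple cover, not a commutator.

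The paper's proof avoids all of this with one observation: after normalising dominant eigenvalues to $1$ by central elements, set $W:=\ker(1-g_n)\cap\ker(1-h_n)$. Then $\dim W^\perp$ and $\dim\rad(W)$ are each at most $(n-m_{g_n})+(n-m_{h_n})$, so a single application of Lemma~\ref{lem:ExtensionToNonDegenerateSubspace} yields one non-degenerate subspace $U$ with $U^\perp\subset W$ and $\dim U\leq (2k+2)(n-m_{h_n})$, on which \emph{both} $g_n$ and $h_n$ already restrict --- no factorisation of $g_n$ and no conjugation of $h_n$ is needed. Then $\lj(h_n|U)\geq \frac{1}{2k+2}$ in $H:=\SI(U)$, Lemma~\ref{lem:BoundedGenerationInTermsOfJordanLength} gives $((h_n|U)^H)^m=H$ for a uniform $m$, and extending by the identity on $U^\perp$ as in Lemma~\ref{lem:RestrictedGroupOfIsometries} transports the conclusion back to $G_n$ and to the ultraproduct. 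You should either adopt this route or supply full proofs of the factorisation, the parity and spinor-norm bookkeeping, and the isometric-embedding step.
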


\begin{proof}
Note that we can safely neglect exceptional groups, since those are of bounded rank and hence dealt with in Proposition~\ref{prop:BoundedRankUltraproductIsSimple}. More generally we assume that $m_n\rightarrow_{\mathfrak{u}}\infty$, where $m_n$ is the rank of the group $G_n$.
By the hypothesis there is a natural number $k$ such that $\frac{m_n-m_{g_n}}{m_n-m_{h_n}}\leq k$ for $\mathfrak{u}$-almost all $n$.

Let $G=\SI(V)$, where $V$ has dimension $n$. This group is only quasisimple, but working with the Jordan length will produce the same result in the ultra product. We can exclude the case when the characteristic of the field of definition is $2$ and $V$ is a defective quadratic space from the following considerations, since under that assumptions $G=\GO(V)$ is isomorphic to a symplectic group. Assume that $\frac{n-m_g}{n-m_h}\leq k$ for some non-trivial elements $g,h\in G$ such that $n-m_g=\rank(1-g)$ and $n-m_h=\rank(1-h)$, that is their rank length and Jordan length are the same. We define $W:=\ker(1-g)\cap \ker(1-h)$. If $W'$ is a complement of $\rad(W)$ in $W$, following Lemma~\ref{lem:ExtensionToNonDegenerateSubspace} there is subspace $W''$ such that $U:=W''\oplus W^\perp$ is non-degenerate and $W'=U^\perp$. Obviously $g$ and $h$ act as the identity on $U^\perp$. Then $g|U$ and $h|U$ are in $H:=\SI(U)$. We perform some calculations of dimensions to see
\[\dim(W^\perp) = n-\dim(W) \leq n-(m_g+m_h-n)=(n-m_g)+(n-m_h)\]
and
\[\dim (\rad(W))\leq n-\dim(W)\leq (n-m_g)+(n-m_h).\]
This together with the introductory remarks implies
\[\dim(U)=\dim(W^\perp)+\dim(\rad(W))\leq 2(n-m_g)+2(n-m_h)\leq (2k+2)(n-m_h). \]
Therefore the Jordan length of $h|U$ estimates as
\[\lj(h|U)=\frac{\dim(U)-(\dim(U)-(n-m_h))}{\dim(U)}=\frac{n-m_h}{\dim(U)}\geq \frac{1}{2k+2}.\]
By Lemma~\ref{lem:BoundedGenerationInTermsOfJordanLength} there is a constant $c$, independent of the hypotheses, such that $((h|U)^H)^m=H$ for $m\geq c(2k+2)$ and consequently $g|U$ is a product of $m$ conjugates of $h|U$ inside $H$. As in Lemma~\ref{lem:RestrictedGroupOfIsometries} we extend the elements occuring in this product to elements in $G$, thereby extending $g|U$ to $g$ and $h|U$ to $h$. Thus the conclusion remains true in $G$ and also when returning attention to the finite simple group $G/Z(G)$.

Because the prototype $G/Z(G)$ was independent of $n$ and the hypotheses did hold for almost all $n$, $g_n$ is a product of $m\geq c(2k+2)$ conjugates of $h_n$ in $G_n$ for almost all $n$.
Hence 
\[\bs{g}\in C(\bs{h})^m\subset N(\bs{h}),\]
which we had to prove.
\end{proof}

\begin{corollary}\label{cor:QuasiOrderOfGroupElements}
If $\bs{g}$ and $\bs{h}$ are non-identity elements in $\bs{G}$, the statements $\bs{g}\in N(\bs{h})$ and $\bs{g}\preceq \bs{h}$ are equivalent.
\end{corollary}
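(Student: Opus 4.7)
The proof will be an immediate combination of the two preceding lemmas. First I would observe that Lemma~\ref{lem:OrderAndLengthConnection} already supplies the forward implication $\bs{g}\in N(\bs{h}) \Rightarrow \bs{g}\preceq \bs{h}$; that lemma was stated for arbitrary ultraproducts carrying an invariant length function, and since the Jordan length is invariant (it descends from the invariant rank length modulo the center, as set up in the definition of $\lj$), it applies verbatim here. For the converse $\bs{g}\preceq \bs{h} \Rightarrow \bs{g}\in N(\bs{h})$, I would invoke Lemma~\ref{lem:OrderAndClassConnectionTwo}, which was proved precisely under the standing hypotheses of this subsection, namely that $\bs{G}$ is an ultraproduct of finite simple groups of Lie type equipped with the Jordan length. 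Packaging the two implications together yields the desired equivalence.

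In other words, the corollary is nothing more than the conjunction of the two lemmas. There is no genuine obstacle at this step: the real work was absorbed into Lemma~\ref{lem:OrderAndClassConnectionTwo}, where the geometric extension results of Subsection~\ref{ssc:GeometryOfFiniteQuasisimpleGroups} (in particular Lemma~\ref{lem:ExtensionToNonDegenerateSubspace} and the isomorphism lemmas identifying the stabilizer of $U^\perp$ with $\SI(U)$) were combined with the Liebeck--Shalev bounded generation result of Lemma~\ref{lem:BoundedGenerationInTermsOfJordanLength}. The bounded-rank case, including sporadic and exceptional factors, was separately dispatched in Proposition~\ref{prop:BoundedRankUltraproductIsSimple}, which forces $N(\bs{h})=\bs{G}$ whenever the ultrafilter $\mathfrak{u}$ concentrates on groups of bounded rank; in that regime the equivalence is trivial because both sides hold automatically for every non-identity $\bs{g},\bs{h}$.
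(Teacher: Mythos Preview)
Your proof is correct and matches the paper's approach: the corollary is stated without proof there because it is precisely the conjunction of Lemma~\ref{lem:OrderAndLengthConnection} and Lemma~\ref{lem:OrderAndClassConnectionTwo}, exactly as you describe. Your added remark about the bounded-rank case handled via Proposition~\ref{prop:BoundedRankUltraproductIsSimple} is also in line with how the paper disposes of that situation inside the proof of Lemma~\ref{lem:OrderAndClassConnectionTwo}.
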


The last preparation we need is Lemma~2.2 in \cite{ellisetal08}, which for the sake of completeness we cite with proof.

\begin{lemma}\label{lem:OrderOfNormalSubgroupsAndOrderOfNormalClosures}
 Let $G$ be any group. Then the set of normal subgroups of $G$ is linearly ordered by inclusion if and only if the set of normal closures of non-identity elements in $G$ is.
\end{lemma}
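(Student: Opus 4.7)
The statement is an ``if and only if'', so the plan is to handle the two directions separately. One direction is immediate; the other requires a short contrapositive argument exploiting that every normal subgroup is determined by the normal closures of its non-identity elements.

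For the easy direction ($\Rightarrow$), the observation is that each normal closure $N(g)$ is itself a normal subgroup of $G$, so the collection $\{N(g) : 1 \neq g \in G\}$ is a subfamily of the lattice of all normal subgroups. A linear order on the whole lattice restricts to a linear order on any subfamily, and nothing more needs to be said.

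For the nontrivial direction ($\Leftarrow$), I would argue by contradiction. Assume the normal closures of non-identity elements are linearly ordered and let $M_1, M_2 \trianglelefteq G$ be two normal subgroups that are \emph{not} comparable. Then there exist $g \in M_1 \setminus M_2$ and $h \in M_2 \setminus M_1$; in particular $g \neq 1 \neq h$. Since $M_1, M_2$ are normal and contain $g, h$ respectively, we have $N(g) \subseteq M_1$ and $N(h) \subseteq M_2$. By hypothesis $N(g)$ and $N(h)$ are comparable, so either $N(g) \subseteq N(h) \subseteq M_2$, forcing $g \in M_2$, or $N(h) \subseteq N(g) \subseteq M_1$, forcing $h \in M_1$. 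Either outcome contradicts the choice of $g$ and $h$.

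There is essentially no obstacle here: the content of the lemma is the remark that normal subgroups are controlled set-theoretically by the normal closures of their elements (in fact $M = \bigcup_{1 \neq g \in M} N(g) \cup \{1\}$), and once this is observed the contradiction argument above is forced. The only minor care is to note that $g$ and $h$ are necessarily non-identity so that the hypothesis on $\{N(g)\}_{g \neq 1}$ applies, which is automatic since $1$ lies in every subgroup.
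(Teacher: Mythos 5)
Your proof is correct and follows essentially the same route as the paper: the forward direction is the trivial restriction of a linear order, and the converse uses that $g\in M$ implies $N(g)\subseteq M$ together with comparability of normal closures. The paper phrases the converse directly ($N\not\subset M$ implies $M\subset N$) rather than by contradiction, but the content is identical.
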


\begin{proof}
The first implication is trivial. For the converse assume that $N$ and $M$ are normal subgroups of $G$ such that $N\not\subset M$. Let $g\in N\setminus M$ and observe that necessarily $N(g)\not\subset N(h)$ for all $h\in M$. Thus $N(h)\subset N(g)$ for all $h\in M$, and $M\subset N$ follows.
\end{proof}

\begin{proof}[Theorem~\ref{thm:NormalSubgroupsOfUltraproductsOfFiniteSimpleGroups}]
We define a quasiorder on the set $\bs{L}:=\prod_{\mathfrak{u}}[n]$ by $\bs{a}\preceq \bs{b}$ if $a_n\leq b_n$ for $\mathfrak{u}$-almost all $n$.
We let furthermore $\bs{a}\equiv \bs{b}$, whenever
\[0<\lim_{\mathfrak{u}}\frac{a_n}{b_n}<\infty.\]
Then $\equiv$ is a convex equivalence relation and the quotient space $\bs{L}/\equiv$ is totally ordered.

By the foregoing considerations, culminating in Corollary~\ref{cor:QuasiOrderOfGroupElements}, the set of normal closures of elements in $\bs{G}$ is order isomorphic to $\bs{L}/\equiv$. Lemma~\ref{lem:OrderOfNormalSubgroupsAndOrderOfNormalClosures} shows that the set of normal subgroups of $\bs{G}$ is linearly ordered by inclusion if and only if the set of normal closures of elements of $\bs{G}$ is. Now Theorem~\ref{thm:NormalSubgroupsOfUltraproductsOfFiniteSimpleGroups} follows.
\end{proof}

After the main theorem explaining the ordering of normal subgroups in ultraproducts of finite simple groups is established, we take a closer look at those.

\begin{lemma}
 Let $\bs{G}$ be an ultraproduct of finite simple groups. A normal subgroup $N\subset \bs{G}$ is of the form $N(\bs{g})$ for some $\bs{g}\in \bs{G}\setminus\{1\}$ if and only if $N$ has a predecessor with respect to the ordering of normal subgroups. 
\end{lemma}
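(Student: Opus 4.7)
The plan is to exploit the linear ordering of the lattice $\mathfrak{N}$ of normal subgroups of $\bs{G}$. That $\mathfrak{N}$ is linearly ordered for ultraproducts of finite non-abelian simple groups follows from Theorem~\ref{thm:NormalSubgroupsOfUltraproductsOfFiniteSimpleGroups} for the Lie-type case, from Theorem~\ref{thm:TheoremOfEllisEtAl} for the alternating case, and from Proposition~\ref{prop:BoundedRankUltraproductIsSimple} (via the ultrafilter trichotomy that decides which family $\bs{G}$ really comes from) for the bounded-rank and sporadic cases. Throughout, I take ``$N$ has a predecessor'' to mean that there exists $M\in\mathfrak{N}$ with $M\subsetneq N$ and no element of $\mathfrak{N}$ strictly between; in a linear order such $M$ is automatically unique.

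For the forward implication, suppose $N=N(\bs{g})$ for some $\bs{g}\neq 1$. The natural candidate for the predecessor is
\[M:=\bigcup \set{K\in \mathfrak{N}}{K\subsetneq N}.\]
Since $\mathfrak{N}$ is linearly ordered, this is a directed union, hence itself a normal subgroup of $\bs{G}$. If $M$ coincided with $N$, then $\bs{g}$ would lie in some proper normal subgroup $K\subsetneq N$, which would force $N=N(\bs{g})\subseteq K\subsetneq N$, a contradiction. Hence $M\subsetneq N$; and by construction $M$ contains every element of $\mathfrak{N}$ sitting strictly below $N$, so $M$ is the desired predecessor.

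For the converse, suppose $N$ has predecessor $M$ and pick $\bs{g}\in N\setminus M$. By linearity the normal subgroups $M$ and $N(\bs{g})$ are comparable, and since $\bs{g}\in N(\bs{g})\setminus M$ the inclusion $N(\bs{g})\subseteq M$ is ruled out; hence $M\subsetneq N(\bs{g})$. On the other hand $\bs{g}\in N$ yields $N(\bs{g})\subseteq N$. The predecessor property of $M$ now forces $N(\bs{g})=N$, as required.

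The only substantive ingredient is the linear ordering of $\mathfrak{N}$, together with the trivial observation that a chain union of normal subgroups is again normal; after that the argument is a short formal manipulation. The one place where I would be careful is the degenerate case in which $\bs{G}$ reduces (via the ultrafilter) to a single finite simple group, so that $\mathfrak{N}=\{\{1\},\bs{G}\}$; here both sides of the equivalence are trivially verified, and no real obstacle arises.
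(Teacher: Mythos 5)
Your proof is correct. The converse direction is identical to the paper's: pick $\bs{g}\in N\setminus N_0$ and squeeze $N_0\subsetneq N(\bs{g})\subseteq N$. The forward direction differs slightly in how the predecessor is produced: the paper exhibits it explicitly as the set $\set{\bs{h}\in\bs{G}}{\bs{h}\prec\bs{g}}$, using the element-level quasiorder and its identification with the ordering of normal closures, whereas you take the union of all normal subgroups properly contained in $N(\bs{g})$, which is a directed (hence normal) union by linearity and is proper because $\bs{g}$ cannot lie in any proper normal subgroup of its own normal closure. Your version is purely lattice-theoretic and works verbatim in any group whose normal subgroups are linearly ordered, at the cost of not giving the concrete description of the predecessor that the paper later uses (it denotes it $N_0(\bs{g})$ and exploits the explicit form in Corollary~\ref{cor:simple}). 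Both arguments rest on the same essential input, namely the linear ordering supplied by Theorem~\ref{thm:NormalSubgroupsOfUltraproductsOfFiniteSimpleGroups} and its companions for the alternating and bounded-rank cases, which you correctly assemble via the ultrafilter case distinction.
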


\begin{proof}
 Using Theorem~\ref{thm:NormalSubgroupsOfUltraproductsOfFiniteSimpleGroups}, it is easy to see that the set $\set{\bs{h}\in\bs{G}}{\bs{h}\prec \bs{g}}$ is a maximal normal subgroup of $N(\bs{g})$. Conversely, let $N$ be a normal subgroup of $\bs{G}$ with a predecessor $N_0$. Then there exists $\bs{g}\in N\setminus N_0$. Since $N_0\subsetneq N(\bs{g})\subset N$, we conclude $N=N(\bs{g})$, since $N_0$ is the predecessor of $N$.
\end{proof}

If $\bs{g}\in\bs{G}$ we denote the predecessor of $N(\bs{g})$ by $N_0(\bs{g})$.

\begin{proposition} \label{prop:comm}
 Let $\bs{G}$ be an ultraproduct of finite simple groups. Then every normal subgroup $N$ in $\bs{G}$ is perfect. Indeed every element in $N$ is itself a commutator of elements in $N$.
\end{proposition}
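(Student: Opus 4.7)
Fix $\bs{g}\in N\setminus\{1\}$. Since $N(\bs{g})\subseteq N$, it suffices to exhibit $\bs{x},\bs{y}\in N(\bs{g})$ with $[\bs{x},\bs{y}]=\bs{g}$; by Corollary~\ref{cor:QuasiOrderOfGroupElements} this amounts to producing, for $\mathfrak{u}$-almost every $n$, a commutator representation $g_n=[x_n,y_n]$ in $G_n$ in which $\lj(x_n)$ and $\lj(y_n)$ are bounded by a fixed multiple of $\lj(g_n)$. Passing to the ultraproduct then yields $\bs{x}\preceq\bs{g}$ and $\bs{y}\preceq\bs{g}$, whence $\bs{x},\bs{y}\in N(\bs{g})\subseteq N$ and $\bs{g}\in[N,N]$.

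If the rank of the $G_n$ is $\mathfrak{u}$-bounded, then $\bs{G}$ is already simple by Proposition~\ref{prop:BoundedRankUltraproductIsSimple}, so $N=\bs{G}$ and Ore's theorem (Liebeck--O'Brien--Shalev--Tiep) applied componentwise yields $\bs{g}=[\bs{x},\bs{y}]$ with $\bs{x},\bs{y}\in\bs{G}=N$ without any length control. So one may assume unbounded rank in what follows.

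For the unbounded-rank case, the plan is to combine the geometric localization developed in Subsection~\ref{ssc:GeometryOfFiniteQuasisimpleGroups} with Ore's theorem. Arguing as in the proof of Lemma~\ref{lem:OrderAndClassConnectionTwo}, one constructs for $\mathfrak{u}$-almost every $n$ a non-degenerate subspace $U_n\subseteq V_n$ of dimension $\dim U_n\le 2(n-m_{g_n})+C_0$, where the universal constant $C_0$ is chosen large enough that $\PSI(U_n)$ is a non-abelian finite simple classical group, and such that, after adjustment of $g_n$ by a central scalar of $\SI(V_n)$, the corresponding representative lies in the restricted isometry subgroup $H_n\cong\SI(U_n)$ supplied by Lemmas~\ref{lem:RestrictedGroupOfIsometries}--\ref{lem:RestrictedGroupOfOrthogonalIsometriesTwo} and acts as the identity on $U_n^\perp$. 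Apply Ore's theorem inside the finite simple group $H_n/Z(H_n)$ to write the image of $g_n$ as a single commutator, lift to $x_n,y_n\in H_n$, and embed the lifts back into $\SI(V_n)$ by extending them via the identity on $U_n^\perp$. Since $x_n,y_n$ act trivially on $U_n^\perp$, one has $\lj(x_n),\lj(y_n)\le\dim U_n/\dim V_n=O(\lj(g_n))$, which is exactly what is required.

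The principal technical obstacle is the lifting step: Ore's theorem in $H_n/Z(H_n)$ pulls back to $H_n$ only modulo an element $z_n\in Z(\SI(U_n))$, and this central scalar of $U_n$ need not be central in $\SI(V_n)$, so its image in $G_n$ is in general nontrivial and must be absorbed into one of the commutator factors before one can descend cleanly to $G_n$. This absorption is feasible once $\dim U_n$ is sufficiently large, because $\SI(U_n)$ is quasisimple and its center is generated by commutators of elements of bounded Jordan length inside $\SI(U_n)$; the resulting modification costs only a bounded multiplicative factor in the Jordan length estimates and therefore does not affect the final conclusion. Care must also be taken to treat each classical type uniformly and, in characteristic two, the defective orthogonal case separately, but both are already covered by the geometric machinery of Subsection~\ref{ssc:GeometryOfFiniteQuasisimpleGroups}.
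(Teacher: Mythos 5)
Your overall strategy --- restrict $g_n$ to a non-degenerate subspace $U_n$ carrying its ``support'' via Lemma~\ref{lem:ExtensionToNonDegenerateSubspace}, apply the Ore theorem in the simple quotient of $\SI(U_n)$, extend by the identity on $U_n^\perp$, and control $\lj(x_n),\lj(y_n)$ by $\dim U_n/\dim V_n$ --- is exactly the paper's argument for the Lie-type case, and your bounded-rank reduction via Proposition~\ref{prop:BoundedRankUltraproductIsSimple} also matches. However, your case division has a concrete hole: an ultraproduct of finite simple groups may be (essentially) an ultraproduct of alternating groups $A_{m_n}$ with $m_n\rightarrow_{\mathfrak{u}}\infty$. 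These fall under neither of your two cases --- they are not of bounded rank, and they are not classical groups $\SI(V_n)$ to which the geometric machinery and the Jordan length apply. The paper treats them separately and first: view $g_n$ as an element of the alternating group on its support, apply Ore's original theorem there, and observe that the resulting $x_n,y_n$ automatically satisfy $\lh(x_n),\lh(y_n)\leq\lh(g_n)$. You need this (easy) additional case.

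On the central-lifting obstruction: you are right to flag it --- the paper passes over it silently and in fact only concludes $\bs{x},\bs{y},[\bs{x},\bs{y}]\in N$ rather than $[\bs{x},\bs{y}]=\bs{g}$ --- but your proposed repair does not work as stated. Knowing that the central defect $z_n\in Z(\SI(U_n))$ is a product of commutators of elements of small Jordan length lets you write $g_n$ as a product of several such commutators (which suffices for the perfectness of $N$), but it does not let you ``absorb'' $z_n$ into one of the two commutator factors: a product of two commutators is not in general a single commutator. To obtain $\bs{g}$ literally as one commutator you would need an Ore-type statement for the quasisimple groups $\SI(U_n)$ themselves (e.g.\ R.~C.~Thompson's theorem for $\SL_n$, and its analogues for the other classical types), and such statements require care since they fail for some quasisimple groups. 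As it stands this step is asserted rather than proved; since the paper's own argument has the same defect, I would accept your proposal as establishing perfectness of $N$, but the ``itself a commutator'' clause needs either the quasisimple Ore input or a weakening of the claim.
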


\begin{proof}
 For a start assume that $\bs{G}$ is an ultraproduct of alternating groups. Given $\bs{g}$ we assume $g_n$ is an element in $A_{m_n}$. We can consider $g_n$ as an element in the alternating group, $A_k$ say, of the support of $g_n$. The famous paper \cite{ore51} of Ore (which led to the Ore Conjecture) implies that $g_n$ is a commutator of elements $x_n$ and $y_n$ in $A_k$, as long as $k\geq 5$, which we can assume without worry. Interpreting $x_n$ and $y_n$ as elements in $A_{m_n}$ we automatically have $\lh(x_n),\lh(y_n)\leq \lh(g_n)$. Therefore $\bs{x},\bs{y}\preceq \bs{g}$, which entails $\bs{x},\bs{y}\in N(\bs{g})\subset N$.

 In the case of groups of Lie type we go the same way.
We have to use the Ore conjecture, solved by Liebeck et al. in \cite{liebecketal10}. If $g_n\in \SL_n(q)$ is in the preimage of $\overline{g_n}\in\PSL_n(q)$ such that $\lr(g_n)=\lj(g_n)$, then $\ker(1-g_n)$ has a higher dimension than $\ker(\alpha-g_n)$ for any other $\alpha\in\mathbb{F}_q\setminus\{1\}$. We fix a complement of $\ker(1-g_n)$ and call this subspace $U$. It is clear that $U$ is invariant under the action of $g_n$ and $g_n|U\in \SL(U)$. Hence we find $\overline{x_n}$ and $\overline{y_n}$ in $\PSL(U)$ such that $\pi(g_n|U)=[\overline{x_n},\overline{y_n}]$, where $\pi$ projects onto $\PSL(U)$. Let $x_n$ and $y_n$ be in the preimage of $\overline{x_n}$ and $\overline{y_n}$, respectively. We can assume $\lj(x_n)=\lr(x_n)$ and $\lj(y_n)=\lr(y_n)$. Then $\lj(x_n\oplus 1),\lj(y_n\oplus 1)\leq \lj(g_n)$ and $\lj([x_n\oplus 1,y_n\oplus 1])=\lj(g_n)$. Hence if we pass to the ultraproduct $\bs{x},\bs{y},[\bs{x},\bs{y}]\in N$.

Now assume $g_n$ belongs to a symplectic, orthogonal or unitary group $\SI_n(q)$. As made clear above we are free to assume $\lj(g_n)=\lr(g_n)$. We use the geometric considerations in Subsection~\ref{ssc:GeometryOfFiniteQuasisimpleGroups}, especially Lemma~\ref{lem:ExtensionToNonDegenerateSubspace}. Let $W:=\ker(1-g_n)$ and $W'$ a complement of $\rad(W)$ in $W$. We obtain a non-degenerate subspace $U$ in $\mathbb{F}_q^n$ such that $g_n$ acts as the identity on $U$, $U^\perp=W'$ and $\dim(U^\perp)\leq 2(n-\dim \ker(1-g_n))$. Thus we can restrict $g_n$ to $U^\perp$ and proceed as above.
\end{proof}

\begin{corollary}
 Let $\bs{G}$ be an ultraproduct of finite simple groups, $\bs{g}\in\bs{G}\setminus\{1\}$ and $N$ a proper normal subgroup in $N(\bs{g})$. Then every element in the group $N(\bs{g})/N$ is a commutator.
\end{corollary}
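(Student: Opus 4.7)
The plan is to deduce this statement directly from Proposition~\ref{prop:comm}. The key observation is that $N(\bs{g})$, being the normal closure in $\bs{G}$ of a single element, is itself a normal subgroup of $\bs{G}$. Hence I may apply Proposition~\ref{prop:comm} with $N(\bs{g})$ playing the role of $N$, which immediately yields that every element $\bs{h}\in N(\bs{g})$ admits a representation $\bs{h}=[\bs{x},\bs{y}]$ with $\bs{x},\bs{y}\in N(\bs{g})$.

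The remainder is a routine quotient argument. Given any coset $\bs{h}N\in N(\bs{g})/N$, I select a representative $\bs{h}\in N(\bs{g})$, write $\bs{h}=[\bs{x},\bs{y}]$ as above, and push the equality through the canonical projection $\pi\colon N(\bs{g})\to N(\bs{g})/N$, which is defined precisely because $N$ is normal in $N(\bs{g})$. Since $\pi$ is a group homomorphism it preserves commutators, so
\[\bs{h}N \;=\; \pi\bigl([\bs{x},\bs{y}]\bigr) \;=\; [\pi(\bs{x}),\pi(\bs{y})] \;=\; [\bs{x}N,\bs{y}N],\]
which exhibits $\bs{h}N$ as a commutator of two elements of $N(\bs{g})/N$.

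There is no serious obstacle to overcome here: all the substantial work has already been carried out in Proposition~\ref{prop:comm} (the use of the Ore conjecture solved by Liebeck et al., the geometric reductions of Subsection~\ref{ssc:GeometryOfFiniteQuasisimpleGroups}, and the Jordan-length calculation). The properness hypothesis on $N$ and its precise position in the lattice of normal subgroups do not actually enter the argument; commutator expressions simply descend along any surjective homomorphism, and the quotient $N(\bs{g})/N$ makes sense as soon as $N$ is normal in $N(\bs{g})$.
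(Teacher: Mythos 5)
Your argument is correct and is exactly the intended deduction: the paper states this corollary without proof as an immediate consequence of Proposition~\ref{prop:comm}, applied to the normal subgroup $N(\bs{g})$ of $\bs{G}$, followed by the observation that commutator expressions descend along the quotient map $N(\bs{g})\to N(\bs{g})/N$.
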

By the maximality of predecessors we deduce one more corollary.

\begin{corollary} \label{cor:simple}
 If $\bs{G}$ is an ultraproduct of finite simple groups, then the group $N({\bs{g}})/N_0(\bs{g})$ is perfect and simple for all $\bs{g}\in\bs{G}\setminus\{1\}$.
\end{corollary}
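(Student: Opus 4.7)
My plan is to handle perfectness and simplicity separately, with the former essentially already done and the latter requiring a small refinement of an earlier lemma.

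Perfectness will follow immediately from the preceding corollary applied with $N = N_0(\bs{g})$: since every element of $N(\bs{g})/N_0(\bs{g})$ is then a commutator, the quotient coincides with its own commutator subgroup.

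For simplicity, I would take a normal subgroup $M$ of $N(\bs{g})$ with $N_0(\bs{g}) \subsetneq M$ and aim to show $M = N(\bs{g})$. Picking $\bs{h} \in M \setminus N_0(\bs{g})$, the characterization $N_0(\bs{g}) = \{\bs{k} \in \bs{G} : \bs{k} \prec \bs{g}\}$ combined with $\bs{h} \preceq \bs{g}$ (from $\bs{h} \in N(\bs{g})$ via Corollary~\ref{cor:QuasiOrderOfGroupElements}) yields $\bs{h} \equiv \bs{g}$. For an arbitrary $\bs{k} \in N(\bs{g})$ I would then distinguish two cases: either $\bs{k} \in N_0(\bs{g}) \subseteq M$, or $\bs{k} \equiv \bs{g} \equiv \bs{h}$, and in the latter case I would argue that $\bs{k}$ is a bounded product of $N(\bs{g})$-conjugates of $\bs{h}^{\pm 1}$, which forces $\bs{k} \in M$ by $N(\bs{g})$-normality of $M$.

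The main obstacle is precisely this last step, since Lemma~\ref{lem:OrderAndClassConnectionTwo} only guarantees that $\bs{k}$ is a bounded product of $\bs{G}$-conjugates of $\bs{h}^{\pm 1}$. To upgrade the conjugating elements into $N(\bs{g})$ I would revisit that lemma's proof: the conjugating elements there arise as $y \oplus \id_{U^\perp}$, where $y$ is an isometry of a non-degenerate subspace $U$ of dimension at most $2(\dim V - m_{g_n}) + 2(\dim V - m_{h_n})$. In the ultraproduct the rank length, and hence the Jordan length, of each such conjugating element is thus bounded by a fixed constant times $\lj(\bs{h})$, so by Corollary~\ref{cor:QuasiOrderOfGroupElements} it lies in $N(\bs{h}) = N(\bs{g})$, as required. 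An analogous treatment, supporting the conjugating permutations on the union of the supports of $\bs{h}$ and $\bs{k}$, handles the alternating case; bounded rank situations collapse to Proposition~\ref{prop:BoundedRankUltraproductIsSimple}, where the corollary is trivial.
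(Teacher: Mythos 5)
Your argument is correct, and for the simplicity half it takes a genuinely different (and more explicit) route than the paper. The paper disposes of the corollary in one line, invoking ``the maximality of predecessors'' together with the uniform commutator statement behind Proposition~\ref{prop:comm}; this elides precisely the point you isolate, namely that a normal subgroup $M$ of $N(\bs{g})$ need not be normal in $\bs{G}$, so the linear ordering of $\bs{G}$-normal subgroups and the identity $N(\bs{h})=N(\bs{g})$ for $\bs{h}\in M\setminus N_0(\bs{g})$ do not by themselves force $M=N(\bs{g})$. Your fix --- rereading the proof of Lemma~\ref{lem:OrderAndClassConnectionTwo} to see that the conjugating elements are of the form $y\oplus\id_{U^\perp}$ with $\dim U\leq(2k+2)(n-m_h)$, hence have Jordan length at most $(2k+2)\lj(h)$ and therefore lie in $N(\bs{h})$ by Corollary~\ref{cor:QuasiOrderOfGroupElements} --- is exactly the right observation, and it is the same ``small support'' principle that the paper's proof of Proposition~\ref{prop:comm} applies to the commutator entries rather than to the conjugators. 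What your route buys is a self-contained proof of simplicity that does not pass through Ore/Wilson at all (those are still needed for perfectness, which you correctly get from the preceding corollary with $N=N_0(\bs{g})$); what the paper's route buys, if one accepts its reading of $N_0(\bs{g})$ as a maximal normal subgroup of $N(\bs{g})$, is brevity. Two small points to keep straight when writing this up: the modulus in your refined lemma depends on the pair $(\bs{k},\bs{h})$, which is harmless since only the existence of some finite bound is needed; and in the alternating case the restriction to the alternating group on a set containing both supports requires that set to have size comparable to $|{\rm supp}(h_n)|$, which is guaranteed by $\lh(k_n)\leq C\lh(h_n)$.
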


In order to prove the previous corollary, it is enough to assume in the proof of Proposition \ref{prop:comm} that there exists a universal constant $c$, such that every element in the commutator subgroup of a non-abelian quasisimple group is a product of at most $c$ commutators. This was established by Wilson \cite{wilson} long before the Ore Conjecture was solved.

\section{Ultraproducts of compact connected simple Lie groups}\label{sec:UltraproductsOfCompactConnectedSimpleLieGroups}
We want to show that an analogue of Theorem~\ref{thm:NormalSubgroupsOfUltraproductsOfFiniteSimpleGroups} holds for quasisimple Lie groups. 

\subsection{Bounded generation in compact connected simple Lie groups}
The motivation for this paragraph is taken from \cite{nikolovsegal12}, Paragraph~5.5.4, wherefrom we freely cite all we need. The goal is to refine the methods from ibid. to obtain the result that in compact connected simple Lie groups an element which is not much longer, in a certain sense, than some other element, can be written as a bounded product of conjugates of the latter.

 Let $G$ be a compact connected quasisimple Lie group. (That is a compact connected perfect Lie group, which is simple modulo its centre.) Then $G$ contains a maximal compact connected abelian subgroup $T$, called a maximal torus of $G$. The dimension as a manifold of $T$ is called the rank of $G$. The choice of a torus $T$ is unique up to conjugation and every element in $G$ is conjugate to one in $T$. Assume that the rank of $G$ is $r$. Then there is a set $\Phi=\{\beta_1,\ldots,\beta_r\}$ of fundamental roots, determining $T$. Each root $\alpha$ corresponds to a character $\alpha:T\rightarrow S^1$. Then
\[\bigcap_{i=1}^r\ker \beta_i=Z(G).\]
A complex number $\mu$ in $S^1$ can be written uniquely as $e^{\imag\theta}$, where $\theta\in ]-\pi,\pi]$. We call $l(\mu):=|\theta|$ the \deph{angle} of $\mu$. Now we define
\[\lt(g):=\frac{1}{\pi r}\sum_{i=1}^rl(\beta_i(g))\]
for all $g\in T$. 
 Proposition~5.11 in \cite{nikolovsegal12}, the proof of which is spread over Subsection~5.5 ibid., includes the following result.

\begin{proposition}\label{prop:TorusLengthFunction}
 The function $\lt:T\rightarrow \mathbb{R}$ is an invariant pseudo length function on $T$ and $\lt(g)=0$ if and only if $g\in Z(G)$.
\end{proposition}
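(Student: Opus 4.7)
The plan is to verify the three pseudo length function axioms LF1, LF2, LF3 for $\lt$, identify its zero set with $Z(G)$, and note that invariance under conjugation within the abelian group $T$ is automatic. The bulk of the work reduces to showing that the \emph{angle function} $l\colon S^1\to[0,\pi]$, $e^{\imag\theta}\mapsto|\theta|$ for $\theta\in(-\pi,\pi]$, is itself an invariant pseudo length function on the circle group.

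First I would establish this key lemma. We have $l(1)=0$ by definition, and $l(\mu^{-1})=l(\mu)$ since $(e^{\imag\theta})^{-1}=e^{-\imag\theta}$ has the same absolute angle (the boundary point $\theta=\pi$ causes no trouble because $-1$ equals its own inverse). For the triangle inequality $l(\mu\nu)\le l(\mu)+l(\nu)$, write $\mu=e^{\imag\alpha}$, $\nu=e^{\imag\beta}$ with $\alpha,\beta\in(-\pi,\pi]$ and split cases. If $\alpha+\beta\in(-\pi,\pi]$, the inequality collapses to $|\alpha+\beta|\le|\alpha|+|\beta|$. Otherwise $|\alpha+\beta|>\pi$, which forces $\alpha$ and $\beta$ to have the same nonzero sign, so $|\alpha|+|\beta|=|\alpha+\beta|\ge\pi$; the canonical representative $\theta\in(-\pi,\pi]$ of $\alpha+\beta$ then satisfies $|\theta|=2\pi-|\alpha+\beta|\le\pi\le|\alpha|+|\beta|$.

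Next, each fundamental root $\beta_i\colon T\to S^1$ is a continuous group homomorphism, so the composition $l\circ\beta_i$ inherits all three axioms; for instance $l(\beta_i(gh))=l(\beta_i(g)\beta_i(h))\le l(\beta_i(g))+l(\beta_i(h))$ by the lemma. Thus $\lt=\frac{1}{\pi r}\sum_{i=1}^r l\circ\beta_i$, being a nonnegative linear combination of pseudo length functions, is itself one (and in fact takes values in $[0,1]$). Invariance of $\lt$ under conjugation within $T$ is automatic from the abelianness of $T$.

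Finally, $\lt(g)=0$ precisely when every summand vanishes, i.e.\ when $\beta_i(g)=1$ for every fundamental root $\beta_i$, which by the cited identity $\bigcap_{i=1}^r\ker\beta_i=Z(G)$ is equivalent to $g\in Z(G)$. The only nonroutine point is the triangle inequality on the circle; everything else follows mechanically from the character property of the $\beta_i$ and the formula for $Z(G)$.
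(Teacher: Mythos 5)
Your proof is correct. Note that the paper itself does not prove this proposition at all --- it simply cites Proposition~5.11 of Nikolov--Segal --- so your direct verification is a self-contained replacement rather than a variant of the paper's argument. The reduction is exactly right: the only non-formal ingredient is that the angle function $l$ satisfies the triangle inequality on $S^1$ (your case split on whether $\alpha+\beta$ lies in $(-\pi,\pi]$ is the standard and correct way to see this, and the observation that an overflow forces $\alpha$ and $\beta$ to share a sign, hence $|\alpha|+|\beta|\geq\pi\geq 2\pi-|\alpha+\beta|$, closes the gap); after that, each $l\circ\beta_i$ is a pseudo length function because $\beta_i$ is a character, nonnegative combinations of pseudo length functions are pseudo length functions, invariance on the abelian group $T$ is vacuous, and the identification of the zero set uses precisely the stated identity $\bigcap_{i=1}^r\ker\beta_i=Z(G)$. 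One caution on wording only: ``invariant'' here must indeed mean invariant as a length function on $T$ itself (your reading), not invariance under conjugation by all of $G$ --- the paper later introduces $\tilde{\lt}$ exactly because $\lt$ fails the latter --- so your interpretation is the right one.
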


If not explicitly stated otherwise we will safely assume that $G$ is a simply connected Lie group, since $\lt$ is zero on the center of $G$ and thus well defined on the quotient $G/Z(G)$. We continue with further features of the internal structure of a (simply connected) compact connected quasisimple Lie group, as outlined in \cite{nikolovsegal12}. We make adjustments to the text and notation of this reference when needed.

For every character $\alpha$ we have a cocharacter $\eta_\alpha:S^1\rightarrow T$ such that $\alpha(\eta_\alpha(\mu))=\mu^2$ for all $\mu\in S^1$. For every pair of opposite roots $\pm \alpha$ of $\Phi$ there is a homomorphism $\phi_\alpha:\SU(2)\rightarrow G$ such that $\eta_\alpha$ is the restriction of $\phi_\alpha$ to the subgroup of diagonal matrices of $\SU(2)$.
For every root $\alpha$ we define subgroups associated with it. There is $T_\alpha:=\set{g\in T}{\alpha(g)=1}\subset T$. Then we have $S_\alpha=S_{-\alpha}$, the image of $\SU(2)\subset G$ under $\phi_\alpha$. $S_\alpha$ commutes elementwise with $T_\alpha$ and $T$ is contained in the central product $S_\alpha T_\alpha$. At last we define the one parameter torus $H_\alpha$ as the image of the cocharacter $\eta_\alpha$. For fundamental roots $\beta_i$ we use the self-explanatory shorthand notation $T_i$, $S_i$ and $H_i$. Then $T$ equals the direct product $H_1H_2 \ldots H_r$. 

Every element $g\in T$ can be decomposed into the product of commuting factors $g=g_1\cdot \ldots \cdot g_r$, where $g_i\in H_i$. We define
\[g'_i:=g_1\cdot \ldots\cdot g_{i-1}g_{i+1}\cdot \ldots \cdot g_r.\]
Then it is clear that $g=g_ig'_i=g'_ig_i$ for any $i$. Moreover $l(\beta_i(g_i))=l(\beta_i(g))$ and $l(\beta_i(g'_i))=0$, since $g'_i\in T_i$. 

The following result can be found in the proof of Lemma~5.20 in \cite{nikolovsegal12}.

\begin{lemma}\label{lem:BoundedGenerationInSUTwo}
 Let $G=\SU(2)$ and $g,h$ be non-trivial elements in $G$ such that $\lt(g)\leq m\lt(h)$, $m\geq 2$ an integer. Then $g$ is a product of at most $m$ conjugates of $h$.
\end{lemma}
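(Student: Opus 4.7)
The plan is to reduce the problem to a classical fact about products of rotations. By Proposition~\ref{prop:TorusLengthFunction}, $\lt$ vanishes on $Z(\SU(2))=\{\pm I\}$, so it descends to $\SO(3)=\SU(2)/Z(\SU(2))$. Writing $g,h$ also for their images, a direct computation from the definition shows that $\pi\lt(g)=\alpha_g$, the rotation angle of $g\in\SO(3)$, so the hypothesis becomes $\alpha_g\leq m\alpha_h$. It therefore suffices to exhibit $g$ as a product of at most $m$ $\SO(3)$-conjugates of $h$; since each $\SO(3)$-conjugate of $h$ has two preimages in $\SU(2)$ differing by $-I$, the factorisation can be lifted to $\SU(2)$ by choosing the right preimages, with any leftover central sign absorbed into one of the factors.

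Let $K_m\subseteq[0,\pi]$ denote the set of rotation angles arising as products of exactly $m$ conjugates of $h$ in $\SO(3)$. The multiplication map $\SO(3)^m\to\SO(3)$ is continuous, $\SO(3)^m$ is connected, and the rotation angle is a continuous function on $\SO(3)$, so $K_m$ is a connected subset of $[0,\pi]$, i.e.\ an interval. Because $h^{-1}$ has the same rotation angle as $h$ and is thus an $\SO(3)$-conjugate of $h$, for $m\geq 2$ we may pair factors so as to cancel, yielding $0\in K_m$. Hence $K_m=[0,A_m]$ for some $A_m\in[0,\pi]$.

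The central computation is the bound $A_m\geq\min(m\alpha_h,\pi)$, which I would prove by induction on $m$. For $m=2$, the classical trace formula for a product of two $\SU(2)$-rotations (obtained from the Pauli-matrix identity $(\vec v_1\cdot\vec\sigma)(\vec v_2\cdot\vec\sigma)=(\vec v_1\cdot\vec v_2)I+\imag(\vec v_1\times\vec v_2)\cdot\vec\sigma$) reads
\[\cos(\alpha/2)=\cos^2(\alpha_h/2)-\sin^2(\alpha_h/2)\cos\phi,\]
where $\phi\in[0,\pi]$ is the angle between the two rotation axes and $\alpha$ is the rotation angle of the product; as $\phi$ varies continuously over $[0,\pi]$, the angle $\alpha$ sweeps over $[0,\min(2\alpha_h,\pi)]$. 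For the inductive step, fix an element of $\SO(3)$ realising an angle $\psi\in K_{m-1}$, multiply it by a conjugate of $h$ with arbitrary axis, and apply the same two-factor calculation with $(\alpha_h,\alpha_h)$ replaced by $(\psi,\alpha_h)$; this gives $A_m\geq\min(A_{m-1}+\alpha_h,\pi)\geq\min(m\alpha_h,\pi)$.

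Combining, since $\alpha_g\leq m\alpha_h$ and trivially $\alpha_g\leq\pi$, we obtain $\alpha_g\in K_m$. Thus some product of $m$ $\SO(3)$-conjugates of $h$ is $\SO(3)$-conjugate to $g$; simultaneously conjugating all factors by the connecting element aligns the product with $g$, and lifting to $\SU(2)$ yields the desired expression of $g$ as a product of at most $m$ conjugates of $h$. I expect the main obstacle to be the trace identity feeding the two-factor formula together with the case analysis around the threshold $m\alpha_h=\pi$ (after which $K_m$ saturates at $[0,\pi]$ and every conjugacy class is automatically available); the lift from $\SO(3)$ to $\SU(2)$ is comparatively routine bookkeeping, made possible by the slack in allowing at most $m$ factors.
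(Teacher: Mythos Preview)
The paper does not prove this lemma; it merely points to the proof of Lemma~5.20 in \cite{nikolovsegal12}, so there is no in-paper argument to compare strategies with.

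Your $\SO(3)$ argument is essentially correct and proves the statement modulo the centre, which is all that is actually used downstream in the paper. (One small wrinkle: for odd $m$ the ``pair factors to cancel'' justification of $0\in K_m$ leaves one factor over; what you need instead is $\alpha_h\in K_{m-1}$, which does follow from your description of $K_{m-1}$.)

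The genuine gap is the lift from $\SO(3)$ back to $\SU(2)$. The two $\SU(2)$-preimages of an $\SO(3)$-conjugate of $\bar h$ are a conjugate of $h$ and a conjugate of $-h$, and these lie in \emph{distinct} $\SU(2)$-conjugacy classes whenever $\operatorname{tr}h\neq 0$. So if your lifted product equals $-g$ rather than $g$, ``absorbing the sign into one of the factors'' turns that factor into a conjugate of $-h$, not of $h$. This is not a bookkeeping issue that the slack in ``at most $m$'' can fix: the lemma as literally stated is in fact false. Take $h=\diag(e^{\imag\pi/4},e^{-\imag\pi/4})$ and $g=\diag(e^{3\imag\pi/4},e^{-3\imag\pi/4})$. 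Then $\lt(g)=\lt(h)=\tfrac12$, so the hypothesis holds with $m=2$; but by your own two-factor formula any product of two conjugates of $h$ has trace $1-\cos\phi\in[0,2]$, while $\operatorname{tr}g=-\sqrt{2}$, and a single conjugate has trace $\sqrt{2}$. Thus $g$ is not a product of at most two conjugates of $h$. The intended assertion is presumably the one in $\SU(2)/Z(\SU(2))\cong\SO(3)$, and your argument there is a valid proof of exactly that.
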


We give two lemmas concerning linear combinations of roots, and the resulting impact on the structure of $G$.

\begin{lemma}\label{lem:LinearCombinationsOfRoots}
 In any simple root system $\Phi$ every long root $\beta$ can be written as $\beta=\alpha_1+\alpha_2$ for short roots $\alpha_1$ and $\alpha_2$. Every short root $\alpha$ can be written as $\alpha=\mu\beta_1+\mu\beta_2$, where $\mu\in \pm\{1/3, 1/2, 1\}$. These are the only coefficients that can appear in a linear combination of two roots to a third.
\end{lemma}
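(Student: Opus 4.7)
The plan is to reduce the statement to a finite case analysis in rank-two root subsystems. Any linear identity $\gamma=x\alpha+y\beta$ between three roots of $\Phi$ takes place inside the root subsystem $\Phi\cap(\mathbb{R}\alpha+\mathbb{R}\beta)$, which has rank at most two. By the classification of irreducible rank-two root systems, this subsystem is of type $A_1$, $A_1\times A_1$, $A_2$, $B_2$, or $G_2$. In the first three, all roots share a common length, so the long/short distinction is empty and the only coefficients that arise are $\pm 1$. The substantive content therefore concerns $B_2$ and $G_2$, and by the Cartan--Killing classification the two-length irreducible types are precisely $B_n$, $C_n$, $F_4$, and $G_2$.

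The core of the plan is to verify the two mixed-length rank-two cases by hand. In $B_2$, taking short roots $\pm e_1,\pm e_2$ and long roots $\pm e_1\pm e_2$, one reads off $e_1+e_2=e_1+e_2$, exhibiting a long root as the sum of two orthogonal short roots, and $e_1=\tfrac{1}{2}\bigl((e_1+e_2)+(e_1-e_2)\bigr)$, exhibiting a short root as one-half the sum of two orthogonal long roots. In $G_2$, labelling short roots of length $1$ at the angles $60^\circ k$ and long roots of length $\sqrt{3}$ at $30^\circ+60^\circ k$, the long root at angle $30^\circ$ is the sum of the two short roots at $0^\circ$ and $60^\circ$, while the short root at $0^\circ$ equals $\tfrac{1}{3}$ times the sum of the long roots at $-30^\circ$ and $+30^\circ$. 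These explicit computations produce exactly the coefficients $\pm 1,\pm\tfrac{1}{2},\pm\tfrac{1}{3}$ asserted in the lemma.

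Having settled the rank-two cases, the existence statements for the full types $B_n$, $C_n$, $F_4$, $G_2$ follow by exhibiting, for each long or short root, a copy of $B_2$ or $G_2$ inside $\Phi$ that contains it and realizes the desired decomposition. In $B_n$ the long root $e_i+e_j$ lies in the $B_2$-subsystem on $\mathbb{R}e_i+\mathbb{R}e_j$; in $C_n$ the long root $2e_i$ lies in the $B_2$-subsystem $\{\pm e_i\pm e_j,\pm 2e_i,\pm 2e_j\}$ for any $j\neq i$; for $F_4$ one similarly exhibits $B_2$-subsystems containing each root of either length; and $G_2$ is handled directly by the rank-two analysis. The symmetric statements for short roots are obtained from the same subsystems by solving the corresponding two-variable linear system.

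The main obstacle is the uniqueness clause, asserting that $\pm\{1/3,1/2,1\}$ exhausts the coefficients that can occur in any identity $\gamma=x\alpha+y\beta$ among three roots. The plan for this part is to tabulate, in each of $A_1\times A_1$, $A_2$, $B_2$, and $G_2$, the finitely many triples $(\alpha,\beta,\gamma)$ with $\gamma\in\mathbb{R}\alpha+\mathbb{R}\beta$ and read off the coefficients of $\gamma$ in the basis $(\alpha,\beta)$. Alternatively one can argue a priori from the crystallographic restriction that the angle between two roots belongs to $\{0^\circ,30^\circ,45^\circ,60^\circ,90^\circ,120^\circ,135^\circ,150^\circ,180^\circ\}$ and the length ratios to $\{1,\sqrt{2},\sqrt{3}\}$, which via elementary trigonometry forces the coefficients into the stated finite list. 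The only real difficulty is bookkeeping; conceptually the whole argument is a reduction of the statement about a general simple root system to Cartan's classification of rank-two systems.
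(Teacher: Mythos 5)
Your argument is correct, but it is organized differently from the paper's. The paper proves the lemma by direct inspection of explicit coordinate realizations of each irreducible type: it dismisses the simply-laced types $A_n$, $D_n$, $E_6$, $E_7$, $E_8$ at once, and then writes down the root vectors of $B_n$, $C_n$, $F_4$ and $G_2$ and reads off the coefficients $\mu=\pm 1/2$, $\pm 1$, $\pm 1/3$ case by case. You instead reduce everything to rank two: any relation $\gamma=x\alpha+y\beta$ lives in the rank-$\le 2$ subsystem $\Phi\cap(\mathbb{R}\alpha+\mathbb{R}\beta)$, the only mixed-length possibilities are $B_2$ and $G_2$, and the two explicit computations there produce exactly $\pm 1$, $\pm 1/2$, $\pm 1/3$; existence in the ambient types is then recovered by exhibiting $B_2$- or $G_2$-subsystems through each root. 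This is a cleaner and more conceptual packaging -- it avoids writing out the $F_4$ root system and makes transparent \emph{why} only these three denominators can occur -- at the cost of invoking the (standard but unstated in the paper) facts that $\Phi\cap V'$ is again a root system and that the reduced rank-two systems are exactly $A_1\times A_1$, $A_2$, $B_2$, $G_2$. The paper's version has the minor advantage of recording which coefficient occurs in which ambient type (e.g.\ $\pm 1/3$ only for $G_2$), but nothing downstream depends on that.

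One caveat, which applies equally to the paper's proof but which your ``tabulate all triples in rank two'' step would actually expose: read literally, the final uniqueness clause is false. In $B_2$ one has $e_1-e_2=(e_1+e_2)-2e_2$, and in $G_2$ the long root at $90^\circ$ equals $-3$ times the short root at $0^\circ$ plus $2$ times the long root at $30^\circ$, so coefficients $\pm 2$, $\pm 3$ do occur in general linear relations among three roots. The clause must be understood as restricted to combinations of the form $\mu\beta_1+\mu\beta_2$ with equal coefficients and with $\beta_1,\beta_2$ of equal length (Weyl conjugates of a single root), which is the only form used in Lemma~\ref{lem:ProductsOfOneDimensionalTori}. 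Under that reading your rank-two tabulation, and your alternative angle/length-ratio argument, both go through.
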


\begin{proof}
 The lemma follows from inspection of the standard representations of root systems.

If $\Phi$ is of type $A_n$, $D_n$, $E_6$, $E_7$ or $E_8$ all roots have the same length and there is nothing to prove. In case of $\Phi$ being of type $B_n$, the roots are exactly the integer vectors $v$ in $\mathbb{R}^n$ with Euclidean norm $|v|=1$ or $|v|=\sqrt{2}$. For type $C_n$ we have $\Phi=\set{v\in\mathbb{Z}^n}{|v|=\sqrt{2}}\cup \set{v\in(2\mathbb{Z})^n}{|v|=2}$. We see that in these cases $\mu=\pm 1/2$.
If $\Phi$ is of type $F_4$, it is the union of the set of all vectors in $\mathbb{R}^4$ with two or one components equal to $\pm 1$ and the others equal to $0$ and the set of vectors with all components being $\pm 1/2$. Here $\mu$ is either $\pm 1/2$ or $\pm 1$, depending on the short root. 
In the remaining case of type $G_2$ we represent $\Phi$ by vectors in $\mathbb{R}^3$, the short roots being
\[(1,-1,0),\, (-1,1,0),\, (1,0,-1),\, (-1,0,1),\, (0,1,-1),\, (0,-1,1)\]
and the long roots
\[(2,-1,-1),\, (-2,1,1),\, (1,-2,1),\, (-1,2,-1),\, (1,1,-2),\, (-1,-1,2).\]
Again a close look implies the claim with $\mu=\pm 1/3$.
\end{proof}

\begin{lemma}\label{lem:ProductsOfOneDimensionalTori}
Let $\alpha$, $\beta$ be fundamental roots of different lengths and $g$ an element in $H_\alpha$ such that $l(\alpha(g))=\epsilon$. Then there are elements $w_1$ and $w_2$ in the Weyl group such that $H_\alpha\subset H_\beta^{w_1}H_\beta^{w_2}$ and in particular $g$ equals the product $g_1g_2$, where $g_i\in H_\beta^{w_i}$ are elements such that $l(\beta^{w_i}(g_i))\leq \epsilon$.
\end{lemma}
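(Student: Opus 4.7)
The plan is to translate the additive relations among roots supplied by Lemma~\ref{lem:LinearCombinationsOfRoots} into additive relations among coroots, and then read off the corresponding identity at the level of cocharacters in the torus $T$. Since the Weyl group acts transitively on the roots of each fixed length in an irreducible root system, once $\alpha$ is written as a combination of two roots in the Weyl orbit of $\beta$, the corresponding one-parameter subtori of $T$ are precisely the $H_\beta^{w_i}$ for suitable Weyl elements $w_i$.

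I would split into two cases. If $\alpha$ is long and $\beta$ short, Lemma~\ref{lem:LinearCombinationsOfRoots} provides short roots $\gamma_1,\gamma_2$ with $\alpha=\gamma_1+\gamma_2$. If $\alpha$ is short and $\beta$ long, one selects the decomposition $\alpha=\mu\gamma_1+\mu\gamma_2$ with $\gamma_1,\gamma_2$ long and $\mu=\pm\tfrac{1}{2}$ in types $B_n,C_n,F_4$ (respectively $\mu=\pm\tfrac{1}{3}$ in $G_2$), which is the only case of the lemma in which both summands are long. In either case I pick $w_i$ in the Weyl group with $w_i\beta=\gamma_i$, so that $H_\beta^{w_i}=H_{\gamma_i}$. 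Passing to coroots via $\gamma^\vee=2\gamma/(\gamma,\gamma)$, the relation $\alpha=c_1\gamma_1+c_2\gamma_2$ in which $\gamma_1,\gamma_2$ share squared length $b$ and $\alpha$ has squared length $a$ becomes $\alpha^\vee=c_1'\gamma_1^\vee+c_2'\gamma_2^\vee$ with $c_i'=(b/a)c_i$. A direct check shows $|c_i'|\leq 1$ throughout: in case~1, $b/a\in\{1/2,1/3\}$ while $c_i=1$; in case~2, $b/a\in\{2,3\}$ exactly cancels the matching $\mu$, yielding $c_i'=\pm 1$. This uniform bound is the quantitative heart of the lemma.

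Finally I realize this in $T$. The coroot identity lifts to $v_\alpha=c_1'v_{\gamma_1}+c_2'v_{\gamma_2}$ in the Lie algebra $\mathfrak{t}$ of $T$, where the cocharacter is $\eta_\gamma(e^{\imag\theta})=\exp(\theta v_\gamma)$ and $v_\gamma$ coincides with $\gamma^\vee$ under the identification forced by $\gamma(\eta_\gamma(\mu))=\mu^2$. Choosing $\theta_0\in(-\pi/2,\pi/2]$ with $e^{2\imag\theta_0}=\alpha(g)$, so that $g=\eta_\alpha(e^{\imag\theta_0})$ and $|2\theta_0|=\epsilon$, I set
\[g_i:=\eta_{\gamma_i}(e^{\imag c_i'\theta_0})\in H_{\gamma_i}=H_\beta^{w_i}.\]
Commutativity of $T$ together with the coroot identity gives $g=g_1g_2$, and since $|c_i'|\epsilon\leq\pi$ no wrap-around on $S^1$ occurs, so
\[l(\beta^{w_i}(g_i))=l(\gamma_i(g_i))=l(e^{2\imag c_i'\theta_0})=|c_i'|\epsilon\leq\epsilon.\]
The inclusion $H_\alpha\subset H_\beta^{w_1}H_\beta^{w_2}$ follows from the vector identity $v_\alpha\in\mathbb{R}v_{\gamma_1}+\mathbb{R}v_{\gamma_2}$ by letting $\theta_0$ vary.

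The main obstacle is really the case analysis behind $|c_i'|\leq 1$, which rests on matching the exponents $1,\tfrac12,\tfrac13$ from Lemma~\ref{lem:LinearCombinationsOfRoots} against the length ratios of the two-length irreducible root systems $B_n,C_n,F_4,G_2$; one must also check, for $F_4$, that every short root admits a representation $\tfrac12(\gamma_1+\gamma_2)$ with both $\gamma_i$ long, rather than only the alternative $\mu=\pm 1$ expansion into short summands. Beyond this bookkeeping, the argument reduces to formal manipulation of cocharacters in the abelian torus $T$.
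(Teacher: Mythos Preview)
Your argument follows exactly the paper's route: write $\alpha$ as a combination of two roots $\gamma_1,\gamma_2$ in the Weyl orbit of $\beta$ via Lemma~\ref{lem:LinearCombinationsOfRoots}, pass to the corresponding relation among coroots, and exponentiate inside $T$. Your explicit computation of the coroot coefficients $c_i'=(b/a)c_i$ together with the case check $|c_i'|\le 1$ is, if anything, more transparent than the paper's appeal to a ``normalization'' in which $h_{\delta+\zeta}=h_\delta+h_\zeta$ (an identity that does not hold literally for the standard coroots $h_\delta$ satisfying $\delta(\eta_\delta(\mu))=\mu^2$ in the non--simply-laced case; your rescaled version is the correct statement). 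Your remark that in $F_4$ one must use the $\mu=\tfrac12$ decomposition of a short root into two long ones, rather than the $\mu=\pm1$ decomposition into short ones, is also to the point.

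There is, however, one unjustified step, and it is precisely the step the paper hides behind ``assume without loss of generality $g=e_\alpha(\tfrac{\epsilon}{2})$''. From $e^{2\imag\theta_0}=\alpha(g)$ with $\theta_0\in(-\tfrac{\pi}{2},\tfrac{\pi}{2}]$ you conclude $g=\eta_\alpha(e^{\imag\theta_0})$; but $\alpha\circ\eta_\alpha$ is the squaring map on $S^1$, so $\alpha(g)$ determines $g\in H_\alpha$ only up to the element $\eta_\alpha(-1)$, and one may equally have $g=\eta_\alpha(e^{\imag(\theta_0+\pi)})$. In your Case~2 ($|c_i'|=1$) this is harmless, since $e^{2\imag c_i'(\theta_0+\pi)}=e^{2\imag c_i'\theta_0}$. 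In your Case~1 ($c_i'\in\{\tfrac12,\tfrac13\}$) it is not: already in type $C_2$, with $\alpha$ the long simple root, $\gamma_{1,2}$ the two short roots summing to $\alpha$, and $g=\eta_\alpha(-1)$, one has $\epsilon=l(\alpha(g))=0$, yet the only factorizations $g=g_1g_2$ with $g_i\in H_{\gamma_i}$ force $\gamma_i(g_i)=-1$, hence $l(\gamma_i(g_i))=\pi$. Thus neither your proof nor the paper's actually delivers the bound $l(\beta^{w_i}(g_i))\le\epsilon$ for \emph{all} $g\in H_\alpha$ when $\alpha$ is long and $\beta$ short; both are complete only under the tacit extra hypothesis that $g$ lies in $\eta_\alpha\bigl(\{e^{\imag\theta}:|\theta|\le\tfrac{\pi}{2}\}\bigr)$.
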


\begin{proof}
 The inclusion $H_\alpha\subset H_\beta^{w_1}H_\beta^{w_2}$ can be found in \cite{nikolovsegal12} in the proof of Lemma~5.19 and the argument goes as follows. The Weyl group $W$ acts on the roots. There are elements $w_1$ and $w_2$ in $W$ such that $\alpha$ equals the linear combination $\mu_1\beta^{w_1}+\mu_2\beta^{w_2}$. The claim follows.

We have to go into detail and take care of lengths of elements in the product. To each root $\delta$ corresponds a coroot $h_\delta$ in the Lie algebra of $G$. Chapter~23 in \cite{bump04} shows that there is a normalization of coroots such that we can assume $h_{\delta+\zeta}=h_\delta+h_\zeta$. The homomorphism $\eta_\delta$ is induced by $e_\delta:\theta\mapsto \exp(\theta \imag h_\delta)$. (Confer \cite{sepanski07}, Theorems~6.20, 4.8, 4.16.) Here the angle $l(\delta(e_\delta(\theta)))$ equals $2|\theta|$ if $|\theta|\in[0,\frac{1}{2}\pi]$ and $2\pi-2|\theta|$ if $|\theta|\in [\frac{1}{2}\pi,\pi]$.  From Lemma~\ref{lem:LinearCombinationsOfRoots} we know that the coefficients $\mu:=\mu_1=\mu_2$ are $\pm\frac{1}{3}$, $\pm\frac{1}{2}$ or $\pm1$. Hence $\mu^{-1}$ is an integer and if we write $\gamma_i:= \beta^{w_i}$, $i=1,2$, 
\[h_{\gamma_1}+h_{\gamma_2}=h_{\gamma_1+\gamma_2}=h_{\mu^{-1}\alpha}=\mu^{-1}h_{\alpha}\]
and so the coroots obey the same linear relation as the roots.

Assume without loss of generality $g=e_\alpha(\frac{1}{2}\epsilon)$, where $\epsilon<\pi$. Then $l(\alpha(g))=\epsilon$ and $\alpha=\mu\gamma_1+\mu\gamma_2$ implies
\[e_\alpha(\tfrac{1}{2}\epsilon)=\exp(\tfrac{1}{2}\epsilon\imag h_\alpha)=\exp(\tfrac{1}{2}\mu\epsilon \imag h_{\gamma_1})\exp(\tfrac{1}{2}\mu\epsilon \imag h_{\gamma_2})=e_{\gamma_1}(\tfrac{1}{2}\mu\epsilon)e_{\gamma_2}(\tfrac{1}{2}\mu\epsilon).\]
Hence $g=e_\alpha(\frac{1}{2}\epsilon)\in H_\alpha$ is the product of elements $g_i=e_{\gamma_i}(\frac{1}{2}\mu\epsilon)$  in the subgroups $H_\beta^{w_1}$ and $H_\beta^{w_2}$, respectively, with angle $l(\gamma_i(g_i))=\mu\epsilon\leq \epsilon$.
\end{proof}

We are now ready to generalize Lemma~\ref{lem:BoundedGenerationInSUTwo} to an arbitrary compact connected quasisimple Lie group $G$. We use the interplay of the groups $S_i$ and the Weyl group $W$, and the decomposition of the maximal torus $T$ into subgroups $H_i$. 

\begin{lemma}\label{lem:BoundedGenerationOfTorusParts}
 Let $g_i\in H_i$ and $h_j\in H_j$, corresponding to $g$ and $h$ in $T$, such that 
$l(\beta_i(g))\leq ml(\beta_j(h))$, where $m$ is an even integer. Then $g_i\in (h^{G}\cup h^{-G})^{4m}$.
\end{lemma}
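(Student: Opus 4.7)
The approach is to pass to the rank-one subgroup $S_i \cong \SU(2)$ containing $g_i$: transport $h_j$ into the torus $H_i$ via the Weyl group, extract $h_j^{\pm 2}$ cheaply from $h$ via a commutator identity, and then invoke Lemma~\ref{lem:BoundedGenerationInSUTwo} inside $S_i$. For the transport, first assume $\beta_i$ and $\beta_j$ have the same length: the Weyl group is transitive on roots of a fixed length, so there is $\gamma \in N_G(T)$ with $\gamma H_j \gamma^{-1} = H_i$, and one sets $\tilde h_j := \gamma h_j \gamma^{-1} \in H_i$, preserving the angle $l(\beta_i(\tilde h_j)) = l(\beta_j(h_j))$. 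When the two lengths differ, Lemma~\ref{lem:ProductsOfOneDimensionalTori} furnishes a two-fold transfer between Weyl-conjugates of $H_j$ and $H_i$ with comparable angles, introducing an additional factor of $2$ into the final count.

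For the commutator identity, let $n_j \in S_j \cap N_G(T)$ represent the Weyl reflection $s_j$, and decompose $h = h_j h'_j$ with $h'_j \in T_j$. Since $S_j$ commutes elementwise with $T_j$ one has $n_j h'_j n_j^{-1} = h'_j$, and since $s_j$ inverts $H_j$ one has $n_j h_j n_j^{-1} = h_j^{-1}$; combined with the commutativity of $h_j$ and $h'_j$ inside $T$ this yields
\[
h \cdot (n_j h n_j^{-1})^{-1} \;=\; (h_j h'_j)\bigl(h_j (h'_j)^{-1}\bigr) \;=\; h_j^2,
\]
so that $h_j^2$, and after $\gamma$-conjugation $\tilde h_j^2$, is displayed as a product of exactly two conjugates of $h^{\pm 1}$ in $G$.

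Inside $S_i \cong \SU(2)$ the length $\lt_{S_i}$ restricts to $l(\beta_i(\,\cdot\,))/\pi$ and the hypothesis reads $\lt_{S_i}(g_i) \leq m \lt_{S_i}(\tilde h_j)$. In the principal regime $\lt_{S_i}(\tilde h_j) \leq 1/2$, squaring doubles the angle, $\lt_{S_i}(\tilde h_j^{\pm 2}) = 2 \lt_{S_i}(\tilde h_j)$, and Lemma~\ref{lem:BoundedGenerationInSUTwo} applied with generator $\tilde h_j^{\pm 2}$ and integer multiplier $\max(m/2,2)$ (which is at most $m$ because $m$ is even) expresses $g_i$ as a product of at most $m$ conjugates of $\tilde h_j^{\pm 2}$ in $S_i$. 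The commutator identity then converts this into at most $2m$ conjugates of $h^{\pm 1}$ in $G$; combined with the transport factor of $2$ this yields the asserted bound of $4m$.

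The main technical obstacle is the complementary regime $\lt_{S_i}(\tilde h_j) > 1/2$, where squaring contracts the angle and $\tilde h_j^{\pm 2}$ can lie arbitrarily close to the identity of $S_i$. Here one exploits the uniform bound $\lt_{S_i}(g_i) \leq 1 \leq 2\lt_{S_i}(\tilde h_j)$ to apply Lemma~\ref{lem:BoundedGenerationInSUTwo} directly to $\tilde h_j$ itself with the small multiplier $2$, obtaining $g_i$ as a product of at most two conjugates of $\tilde h_j$ in $S_i$; the resulting conjugates of $\tilde h_j$ must then be converted to conjugates of $h^{\pm 1}$ by a more delicate argument that exploits the near-maximal angle of $\tilde h_j$ together with the even parity of $m$, so that all accumulated factors of $2$ combine cleanly into the final bound $4m$.
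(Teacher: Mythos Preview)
Your commutator identity $h \cdot (n_j h n_j^{-1})^{-1} = h_j^2$ is correct, and the principal-regime argument works. The genuine gap is the complementary regime $\lt_{S_i}(\tilde h_j) > 1/2$: there you obtain $g_i$ as a product of two $S_i$-conjugates of $\tilde h_j$, hence two $G$-conjugates of $h_j$, and then appeal to an unspecified ``more delicate argument'' to convert conjugates of $h_j$ into conjugates of $h^{\pm 1}$. No such argument is given, and your commutator identity produces only $h_j^2$, not $h_j$, so it cannot close this gap; nothing about the near-maximal angle of $h_j$ helps express a single $h_j$ through $h$.

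The paper avoids the problem by a different mechanism that needs no case split on the size of $l(\beta_j(h_j))$. It transports $g_i$ into $S_j$ (rather than $h_j$ into $S_i$) and applies Lemma~\ref{lem:BoundedGenerationInSUTwo} with generator $h_j$ itself and multiplier $m$, obtaining $g_i^v \in (h_j^{S_j})^m$. The conversion to conjugates of $h$ uses the set identity $(h^{S_j})^m = (h_j^{S_j})^m (h'_j)^m$, valid because $h'_j \in T_j$ commutes elementwise with $S_j$; thus $g_i^v \in (h^{S_j})^m (h'_j)^{-m}$. Since $h_j$ and $h_j^{-1}$ are conjugate in $\SU(2)$ one has $1 \in (h_j^{S_j})^2$, hence $(h'_j)^{\pm 2} \in (h^{\pm S_j})^2$, and even parity of $m$ then gives $(h'_j)^{-m} \in (h^{-S_j})^m$, so $g_i \in (h^G \cup h^{-G})^{2m}$ uniformly. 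For different root lengths the paper decomposes $g_i$ (not $h_j$) via Lemma~\ref{lem:ProductsOfOneDimensionalTori} into $f_1^{w_1} f_2^{w_2}$ with $f_k \in H_j$ and $l(\beta_j(f_k)) \leq l(\beta_i(g_i)) \leq m\, l(\beta_j(h_j))$; the same uniform argument applies to each $f_k$ and doubles the count to $4m$. Note that this factorization trick, applied with exponent $2$, would also repair your complementary regime---but once you have it, the detour through $h_j^2$ becomes unnecessary.
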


\begin{proof}
The proof splits in two cases whether $\beta_i$ and $\beta_j$ have the same length or not.

If $\beta_i$ and $\beta_j$ are roots of the same length, then there is an element $v$ in the Weyl group $W$ such that $H_i^v=H_j$. We see that this entails $g_i^v\in H_j\subset S_j$ and, because the action of the Weyl group on the roots is by conjugation of the argument, $l(\beta_j(g_i^v))=l(\beta_i(g_i))$. Now $g_i^v\in (h_j^{S_j})^m$ by Lemma~\ref{lem:BoundedGenerationInSUTwo}.

We compute \[(h^{S_j})^m=((h_jh'_j)^{S_j})^m=(h_j^{S_j})^m\cdot {h'_j}^m\]
to deduce \[g_i\in ((h^{S_j})^m{h'_j}^{-m})^{v^{-1}}.\]

Now $l(\beta_j(1))=0$ and by Lemma~\ref{lem:BoundedGenerationInSUTwo}, $1\in (h_j^{S_j})^2$. Therefore, and because $h'_j$ commutes with every element in $S_j$,
\[{h'_j}^2\in (h_j^{S_j})^2 \cdot (h_j')^2 = (h^{S_j})^2.\]
Note that this works equally well for $h^{-1}$ instead of $h$. Because we assumed $m$ even, we arrive at
\[g_i\in ((h^{S_j})^m(h^{-S_j})^m)^{v^{-1}}\subset (h^{G}\cup h^{-G})^{2m}.\]

If $\beta_i$ and $\beta_j$ are roots of different lengths, Lemma~\ref{lem:ProductsOfOneDimensionalTori} gives the existence of elements $w_1$ and $w_2$ such that $g_i=f_1^{w_1}f_2^{w_2}$, where $f_k$ is in $H_j$ and $l(\beta_j(f_k))\leq l(\beta_i(g_i))$, for $k=1,2$. Then, again by Lemma~\ref{lem:BoundedGenerationInSUTwo}, $f_k\in (h_j^{S_j})^m$, and we obtain
\[g_i\in ((h_j^{S_j})^m)^{w_1}\cdot ((h_j^{S_j})^m)^{w_2}.\]
We now proceed as above to deduce
\[g_i\in ((h^{S_j})^m(h^{-S_j})^m)^{w_1} \cdot ((h^{S_j})^m(h^{-S_j})^m)^{w_2}\subset (h^{G}\cup h^{-G})^{4m}.\]
\end{proof}

The next theorem is modelled after Case~1 in Lemma~5.19 in \cite{nikolovsegal12}.

\begin{theorem}\label{thm:BoundedGenerationOfSmallElementsInLieGroupsOfSmallRank}
 Let $\epsilon>0$ and $G$ be a compact connected simple Lie group of rank $r$. Assume $g$ and $h$ are non-trivial elements in $T$ satisfying $\lt(h)=\epsilon$ and $\lt(g)\leq m\lt(h)$ for an even integer number $m$. Then \[g\in (h^G\cup h^{-G})^{4mr^2}.\]
\end{theorem}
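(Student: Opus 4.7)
The plan is to reduce to the single-root situation handled by Lemma~\ref{lem:BoundedGenerationOfTorusParts} via the canonical decomposition of the maximal torus $T = H_1 \cdots H_r$. Write $g = g_1 \cdots g_r$ with $g_i \in H_i$, and similarly $h = h_1 \cdots h_r$ with $h_j \in H_j$. By the properties recalled before Lemma~\ref{lem:BoundedGenerationInSUTwo} we have $l(\beta_i(g_i)) = l(\beta_i(g))$ (and analogously for $h$), and $g_i$ is annihilated by every other fundamental character. Since $g$ is the product of the $g_i$, it suffices to express each $g_i$ as a bounded product of conjugates of $h^{\pm 1}$ and then concatenate.

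The hypotheses give a bound on averages rather than on individual $l(\beta_i(g))$, so the main obstacle is selecting an index $j$ for $h$ that works uniformly for all $i$. I would choose $j$ maximizing $l(\beta_j(h))$. By definition of $\lt$,
\[
\sum_{k=1}^{r} l(\beta_k(h)) \;=\; \pi r\, \lt(h) \;=\; \pi r \epsilon,
\]
so the maximum satisfies $l(\beta_j(h)) \geq \pi \epsilon$. On the other hand, for any $i$,
\[
l(\beta_i(g)) \;\leq\; \sum_{k=1}^{r} l(\beta_k(g)) \;=\; \pi r\, \lt(g) \;\leq\; \pi r m \epsilon \;\leq\; r m \cdot l(\beta_j(h)).
\]
Since $m$ is even, so is $rm$, and Lemma~\ref{lem:BoundedGenerationOfTorusParts} applies to the pair $(g_i, h_j)$ with multiplier $rm$, giving $g_i \in (h^G \cup h^{-G})^{4rm}$ for every $i = 1, \ldots, r$.

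Finally, multiplying the $r$ inclusions yields
\[
g \;=\; g_1 \cdots g_r \;\in\; (h^G \cup h^{-G})^{4rm} \cdot \ldots \cdot (h^G \cup h^{-G})^{4rm} \;=\; (h^G \cup h^{-G})^{4r^2 m},
\]
which is exactly the claimed bound. The slack of a factor $r$ between the $4rm$ from Lemma~\ref{lem:BoundedGenerationOfTorusParts} and the averaged bound $m\lt(h)$ in the hypothesis (together with a second factor $r$ from the $r$-fold product) is precisely the source of the $r^2$ in the exponent; one cannot replace it by $r$ in this proof because $h$ may concentrate its weight on a single fundamental root while $g$ may spread over all $r$ of them.
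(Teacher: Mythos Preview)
Your argument is correct and is essentially the paper's own proof. The paper also decomposes $g=g_1\cdots g_r$, picks $j$ with $l(\beta_j(h))\geq \pi\epsilon$ by averaging, and applies Lemma~\ref{lem:BoundedGenerationOfTorusParts} to each $g_i$; the only cosmetic difference is that the paper introduces individual even multipliers $m_i$ (the smallest even integer with $\lt(g_i)\leq m_i\lt(h_j)$) and then bounds $\sum_i m_i\leq mr^2$ via $m_i\leq mr$, whereas you take the uniform multiplier $rm$ from the start---the resulting exponent $4mr^2$ is identical.
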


\begin{proof}
 Write $g=g_1\cdot \ldots\cdot g_r$, $h=h_1\cdot \ldots \cdot h_r$, where $g_i,h_i\in H_i$. For reasons of averaging there is one fundamental root $\beta_j$ such that $l(\beta_j(h))\geq \epsilon\pi$. Let $m_i\geq 2$ be the smallest even integer such that $\lt(g_i)\leq m_i\lt(h_j)$. Then $m_i$ cannot be larger than $mr$ for any $i$. Therefore we get $\sum_{i=1}^rm_i\leq mr^2$. We now use Lemma~\ref{lem:BoundedGenerationOfTorusParts} to obtain for all $i$
\[g_i\in (h^G\cup h^{-G})^{4m_i},\]
independently of the length of roots involved. Because $g$ is the product of the $g_i$, and summing the $m_i$ gives at most $mr^2$, $g$ is a product of $4mr^2$ or less conjugates of $h$ and $h^{-1}$.
\end{proof}

\subsection{Normal subgroups of ultraproducts of compact connected simple Lie groups}
\begin{proposition}\label{prop:LOneLengthOnUnitaryGroups}
 Let $\|u\|$ denote the $l^1$-norm on the matrix ring $M_n(\mathbb{C})$. Then $\ell_1(u):=\frac{1}{2n}\|1-u\|$ defines an invariant length function on the group of unitary complex matrices $\U(n)$.
\end{proposition}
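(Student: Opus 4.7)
The plan is to verify the four defining conditions (LF1, LF2, LF3, and conjugation invariance) directly from the standard properties of the $\ell^1$-norm on $M_n(\mathbb{C})$ (i.e.\ the trace/Schatten-1 norm $\|a\| = \mathrm{tr}(|a|)$), namely positivity with non-degeneracy, $\ast$-invariance $\|a^\ast\| = \|a\|$, the triangle inequality, and unitary invariance $\|uav\| = \|a\|$ for unitaries $u,v$. Because the normalisation $\frac{1}{2n}$ is just a positive constant, all four axioms reduce to properties of the functional $u \mapsto \|1-u\|$ on $\U(n)$.

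For LF1, $\|1-u\| = 0$ if and only if $1-u = 0$, i.e.\ $u = 1$, by non-degeneracy of the norm. For LF2, I would use that $u \in \U(n)$ implies $u^{-1} = u^\ast$, so that $1 - u^{-1} = 1 - u^\ast = (1-u)^\ast$, and then invoke $\ast$-invariance of the trace norm to get $\|1-u^{-1}\| = \|1-u\|$, hence $\ell_1(u^{-1}) = \ell_1(u)$.

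For the triangle inequality LF3, I would write
\[
1 - uv = (1-u) + u(1-v),
\]
apply the triangle inequality for the norm, and then use that $u$ is unitary to get $\|u(1-v)\| = \|1-v\|$ by unitary invariance. This yields $\|1-uv\| \leq \|1-u\| + \|1-v\|$, and dividing by $2n$ gives the desired inequality for $\ell_1$. For conjugation invariance, the identity $1 - hgh^{-1} = h(1-g)h^{-1}$ together with the two-sided unitary invariance $\|h \cdot (1-g) \cdot h^{-1}\| = \|1-g\|$ (valid because $h$ and $h^{-1} = h^\ast$ are unitary) immediately gives $\ell_1(hgh^{-1}) = \ell_1(g)$.

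There is essentially no obstacle here, since everything reduces to standard facts about the trace norm; the only point requiring a moment of care is the unitary-invariance step in LF3 and in the conjugation identity, which is the feature that genuinely uses $u, h \in \U(n)$ rather than merely $u, h \in \GL_n(\mathbb{C})$. The normalising factor $\frac{1}{2n}$ plays no role in the proof itself but is included so that $\ell_1$ takes values in $[0,1]$, since $\|1-u\| \leq \|1\| + \|u\| = 2n$ for $u \in \U(n)$.
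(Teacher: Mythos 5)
Your proof is correct and is exactly the verification the paper leaves implicit (its proof consists of the single sentence that the claim ``follows from well known properties of unitary groups and matrix norms''). Your reading of the $l^1$-norm as the trace (Schatten-$1$) norm is the right one — it is the interpretation forced by the unitary invariance needed here and by the eigenvalue formula $\ell_1(g)=\frac{1}{2n}\sum_i|1-\mu_i|$ used later in Lemma~\ref{lem:LOneLengthOnCompactLieGroups} — and all four axioms are checked correctly.
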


\begin{proof}
This follows from well known properties of unitary groups and matrix norms.
\end{proof}

In the following we are going to use fixed unitary representations of different types of Lie groups, which we will refer to as standard representations. (Confer \cite{bump04}, Chapter~20.) We use the obvious embedding of $\SU(n)$ in $\U(n)$. We have $\Sp(2n)$ realized inside $\U(2n)$ as matrices of the form $\left(\begin{smallmatrix} a &-\overline{b}\\ b &\overline{a}\end{smallmatrix}\right)$ with complex $n\times n$-matrices $a,b$. The orthogonal matrices $\SO(2n+1)$ embed into $\U(2n+1)$ such that their maximal torus consists of elements $\diag(t_1,\ldots, t_n,1,t_n^{-1},\ldots,t_1^{-1})$. The situation is similar for $\SO(2n)$ except the $1$ in the middle is missing. For the exceptional groups $E_7$, $E_8$, $F_4$ and $G_2$ let $\delta$ be the smallest fundamental representation and for $E_6$ the second smallest fundamental representation, of maximal dimension $351$ in the case of $E_6$. As the standard representation we use $\delta':g\mapsto \delta(g)\oplus \bar\delta(g)$.

Note that we have to consider exceptional Lie groups since now in the bounded rank case instead of Proposition~\ref{prop:BoundedRankUltraproductIsSimple} we are facing the more complicated Theorem~\ref{thm:NormalSubgroupsOfUltraproductsOfLieGroups} below. 

We define
\[\tilde{\lt}(g):=\sup_{t\in C(g)\cap T}\lambda(t).\]
 This new function has the advantage that, in contrast to $\lambda$, it is invariant under conjugation. Moreover, as can be expected it usually takes considerably larger values than $\lambda$, a fact we shall exploit in the proof of the next lemma.

\begin{lemma}\label{lem:LOneLengthOnCompactLieGroups}
  Let $G$ be a compact connected quasisimple Lie group of rank $r$ contained in a unitary group $\U(n)$ by the respective standard embedding. We write 
 \[\ell_1'(g):=\inf_{z\in Z(U(n))}\frac{n}{r}\ell_1(zg)\]
 for elements $g\in G$. Then there is a constant $L$ such that the following holds: If $G$ is classical, then for any $g$ in $G$
 \[\frac{1}{L}\ell_1'(g)\leq \tilde{\lt}(g)\leq L\ell_1'(g).\]
 If $G$ is exceptional, for every $g\in G$ with $\tilde{\lt}(g)\leq \frac{1}{2r}$
 \[\frac{1}{L}\ell_1'(g)\leq \lt(g)\leq L\ell_1'(g).\]
\end{lemma}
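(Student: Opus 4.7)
The plan is to reduce everything to the maximal torus $T$ and then compare the two length functions weight-by-weight. The crucial properties are that the Schatten $1$-norm (which is how I read the phrase ``$l^1$-norm'') is unitarily invariant and that every element of $G$ is conjugate to a torus element. The first step is therefore to note that $\ell_1(u)$ depends only on the eigenvalues of $u$: if the eigenvalues of $t\in T$ in the standard embedding $G\hookrightarrow \U(n)$ are $e^{\imag \mu_j(t)}$, where $\mu_j$ runs over the weights of the representation with multiplicity, then
\[\ell_1'(t) \;=\; \frac{1}{r}\inf_{\phi\in\mathbb{R}}\sum_{j}\bigl|\sin\bigl(\tfrac{1}{2}(\mu_j(t)+\phi)\bigr)\bigr|.\]
Because $\ell_1'$ is thus invariant under $G$-conjugation and $C(g)\cap T=W\cdot t$ is a Weyl orbit, the quantity to bound against $\tilde{\lt}(g)=\max_{w\in W}\lt(wt)$ is exactly $\ell_1'(t)$ for any representative $t$.

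Next I would linearize: for $|x|\leq\pi$ one has $|x|/\pi\leq|\sin(x/2)|\leq|x|/2$, and after an appropriate central twist all weight angles can be arranged to lie in $(-\pi,\pi]$. This reduces the comparison to a comparison between $\frac{1}{r}\sum_j |\mu_j(t)|$ (after an optimal central shift) and $\frac{1}{r}\sum_i l(\beta_i(wt))$, maximized over $w\in W$. For the classical groups $\SU(n)$, $\Sp(2n)$, $\SO(2n)$, $\SO(2n+1)$ the weights of the standard embedding are (up to a central correction in type $A$) the coordinate functionals $\pm e_k$, and the fundamental roots are short differences $e_k-e_{k+1}$ (together with a boundary root). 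A direct combinatorial computation then shows that $\sum_k |e_k(t)|$ and $\max_{\sigma\in S_n}\sum_k |e_{\sigma(k)}(t)-e_{\sigma(k+1)}(t)|$ are comparable up to a universal constant (the ``alternating'' permutation realizes the maximum up to a factor of $2$). This is precisely where the necessity of passing to $\tilde{\lt}$ enters: the $W$-invariant quantity $\sum_j|\mu_j(t)|$ is only comparable to $\sum_i l(\beta_i(t))$ after Weyl-reordering $t$.

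For the exceptional cases the standard representation is $\delta\oplus\bar\delta$, whose weights come in pairs $\pm\mu$. Therefore the central infimum is essentially realized at $\phi=0$ and yields $\frac{2}{r}\sum_{\mu\in\Lambda(\delta)}|\mu(t)|$. Under the smallness assumption $\tilde{\lt}(g)\leq 1/(2r)$ every weight angle $\mu(t)$ is safely in the linear regime (since each weight is a bounded rational combination of fundamental weights, hence of fundamental roots, and $r$ is now bounded by $8$), and the group-theoretic machinery in Chapter~23 of \cite{bump04} expressing the coroots $h_\mu$ as explicit integer combinations of the coroots $h_{\beta_i}$ provides a two-sided bound between $\sum_\mu |\mu(t)|$ and $\sum_i l(\beta_i(t))$ by a constant depending only on the root system. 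Since there are only finitely many exceptional types, a uniform constant $L$ can be taken.

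The main obstacle will be the careful bookkeeping of constants so that $L$ really is independent of the rank $r$ in the classical case. The $n/r$ rescaling in the definition of $\ell_1'$ is designed to compensate for the linear growth of the number of weights of the standard embedding relative to the rank ($n\sim r$ for all four classical families), and the comparison between $\sum_j|\mu_j(t)|$ and the Weyl-maximized $\sum_i l(\beta_i(wt))$ has to be carried out in such a way that the combinatorial factor is $O(1)$ rather than $O(r)$. A secondary technicality is handling the center twist: the possible twists form a one-parameter family in $Z(\U(n))$ while $Z(G)$ is finite, so one must verify that the extra freedom of optimizing over $\phi\in\mathbb{R}$ does not reduce $\ell_1'$ below a constant multiple of $\lt$ — which follows because the weights of the standard representation span the weight lattice modulo $Z(G)$, preventing simultaneous cancellation.
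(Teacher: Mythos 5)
Your proposal is correct and follows essentially the same route as the paper: reduce to the maximal torus, linearize $|1-e^{\imag\theta}|$ against the angle, compare sums of weight angles with Weyl-maximized sums of root angles coordinate-wise for each classical family (the paper realizes your ``alternating permutation'' step by a greedy reordering so that $l(t_it_{i+1}^{-1})\geq l(t_it_j^{-1})$ for $j>i$ and then centers at $t_n$), and for the exceptional groups use the integer relations between weights and fundamental roots together with the smallness hypothesis to neutralize the central twist. The only point to tighten in a write-up is the asserted comparability of $\inf_c\sum_k|x_k-c|$ with $\max_\sigma\sum_k|x_{\sigma(k)}-x_{\sigma(k+1)}|$ on the circle, which is exactly the computation the paper carries out family by family.
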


\begin{proof}
 In $\U(n)$ we can write
\[\ell_1(g)=\frac{1}{2n}\sum_{i=1}^n|1-\mu_i|,\]
 where $\mu_i$ are the eigenvalues of $g$. For $\theta\in [0,\pi]$ we have $|1-e^{\imag \theta}|=\sqrt{2}\sqrt{1-\cos \theta}$. By some analysis it can be seen that $1-\cos\theta\leq 8\frac{\theta^2}{\pi^2}$ and hence $\frac{1}{4}\sqrt{2}\sqrt{1-\cos\theta}\leq \theta/\pi$. We also have $\theta/\pi\leq \frac{1}{2}\sqrt{2}\sqrt{1-\cos\theta}$. By symmetry we obtain the necessary estimates for $\theta\in]-\pi,0]$. Thus
\[l(e^{\imag\theta})/\pi\leq \tfrac{1}{2}|1-e^{\imag\theta}|\leq 2l(e^{\imag\theta})/\pi.\]

Let first $G$ be equal to $\SU(n)$. For diagonal elements $t=\diag(t_1,\ldots,t_n)$ in the torus $T_n$ of diagonal matrices of determinant $1$ we have $\beta_i(t)=t_it_{i+1}^{-1}$. Therefore (abusing notation to apply $\beta_i$ to elements not in $\SU(n)$) $\lt(t)=\lt(zt)$ for any central element $z=\diag(z,\ldots ,z)\in Z(U(n))$. 
Because $\ell_1$ on $S^{1}$ is a length function $|1-t_it_{i+1}^{-1}|\leq |1-t_i|+|1-t_{i+1}|$, and hence
\[|1-zt_i(zt_{i+1})^{-1}|\leq |1-zt_i|+|1-zt_{i+1}|.\]
 Therefore the estimate
\[\lt(t)=\inf_{z\in Z(\U(n))}\lt(zt)\leq \inf_{z\in Z(\U(n))}\frac{1}{n-1}\sum |1-zt_i|=2\ell_1'(t)\]
follows. Then also $\tilde{\lt}(g)\leq 2\ell_1'(g)$ holds for any $g\in \SU(n)$.
Given $t$ we can reorder its diagonal entries by conjugation with a generalized permutation matrices (permutation matrices with entries in $\pm 1$ and determinant $1$), such that without loss of generality $l(t_1t_2^{-1})$ is maximal among all possible values $l(t_it_j^{-1})$. Proceeding from this point we can achieve inductively that $l(t_it_{i+1}^{-1})\geq l(t_it_j^{-1})$ for all $j>i$.
This yields $l(t_it_n^{-1})\leq l(t_it_{i+1}^{-1})$ for all $i=1\ldots n-1$.
Now
\[\inf_{z\in Z(\U(n))}\sum_{i=1}^n l(t_iz)\leq \sum_{i=1}^n l(t_it_n^{-1})\leq \sum_{i=1}^{n-1} l(t_it_{i+1}^{-1})\] follows. After normalizing the sums with the right factor and with the introductory estimates, $\ell_1'(t)\leq 2\lt(t)$ and $\ell_1'(g)\leq 2\tilde{\lt}(g)$ for any $g\in \SU(n)$ hold.

Now consider $\SO(2n+1)\subset \U(2n+1)$. An element in the maximal torus of $\SO(2n+1)$ then has the form $t=\diag(t_1, \ldots,t_n,1,t_n^{-1},\ldots ,t_1^{-1})$. The characters corresponding to fundamental roots are given by $\beta_i(t)=t_it_{i+1}^{-1}$ for $i=1\ldots n-1$ and $\beta_n(t)=t_n$. We want to proceed as in the case of $\SU(n)$ but have to take care of the last fundamental root.
Every root has to be estimated twice, since $\ell_1$ counts both $|1-t_i|$ and $|1-t_i^{-1}|$, and so we apply the estimate used for $\SU(n)$ to $2|1-zt_i(zt_{i+1})^{-1}|=|1-zt_i(zt_{i+1})^{-1}|+|1-zt_i^{-1}z^{-1}t_{i+1}|$ to obtain
\[2|1-zt_i(zt_{i+1})^{-1}| \leq |1-zt_i|+|1-zt_{i+1}|+ |1-zt_i^{-1}|+|1-zt_{i+1}^{-1}|.\]
Noting that $t_{n+1}=1$ and $t_i^{-1}=t_{2n+2-i}$, we see
\begin{align*}
\lt(t) &=\frac{1}{\pi n}\sum_{i=1}^nl(t_it_{i+1}^{-1})\leq \frac{1}{2n}\sum_{i=1}^n|1-zt_i(zt_{i+1})^{-1}|\\
&\leq \frac{1}{2n}\sum_{i=1}^n|1-zt_i|+|1-zt_i^{-1}|+\frac{1}{4n}(|1-z|+|1-z^{-1}|)\\
&= \frac{1}{2n}\sum_{i=1}^{2n+1}|1-zt_i|= \frac{2n+1}{n}\ell_1(zt).
\end{align*}
By taking the infimum over all $z\in Z(\U(2n+1))$,  $\lt(t)\leq \ell_1'(t)$ follows, and because this is independent of the ordering of the $t_i$ also $\tilde{\lambda}(g)\leq \ell_1'(g)$, where $g$ is arbitrary.
To reorder the diagonal entries of $\diag(t_1,\ldots, t_n,1,t_n^{-1},\ldots,t_1^{-1})$ by conjugation with a permutation matrix there is the possibility to permute $t_i$ and $t_i^{-1}$ and to permute the first $n$ entries, which entails corresponding permutation of the last $n$. Hence without loss of generality we can assume $l(t_1t_2^{-1})$ maximal among all $l(t_it_j^{\pm 1})$ and $l(t_it_{i+1}^{-1})\geq (t_it_j^{\pm 1})$ for $j>i$. Then
\begin{align*}
\inf_{z\in Z(\U(n))}  \sum_{i=1}^n l(t_iz)+l(t_i^{-1}z)+l(z)&\leq \inf_{z\in Z(\U(n))} \sum_{i=1}^{n-1} l(t_it_n)+l(t_i^{-1}t_n)+l(z)\\
&\leq 2\sum_{i=1}^{n-1} l(t_it_{i+1}^{-1})+3l(t_n)
\end{align*}
implies $\ell_1'(t)\leq 6\lt(t)$ and consequently $\ell_1'(g)\leq 6\tilde{\lt}(g)$ for any $g\in \SO(2n+1)$.

We continue right away with $\SO(2n)$, where the characters evaluate as $\beta_i(t)=t_it_{i+1}^{-1}$ if $i<n$ and $\beta_n(t)=t_{n-1}t_n$. The computations to obtain $\tilde{\lt}(g)\leq 2\ell_1'(g)$ are done as in the case of $\SO(2n+1)$ without the difficulties resulting from the different root and the odd dimension. We can permute entries of diagonal elements similarly to the case of $\SO(2n+1)$ with the restriction of performing only an even number of exchanges $t_i\mapsto t_i^{-1}$. But regardless of whether we have $t_n^{\pm 1}$ in the right place, the estimate works as in the case of $\SO(2n+1)$.

In $\Sp(2n)$ the characters are given by $\beta_i(t)=t_it_{i+1}^{-1}$ when $i<n$ and by $\beta_n(t)=t_n^2$. We have the same possibilities to permute $t_i$, $t_j$ and $t_i$, $t_i^{-1}$ as in $\SO(2n+1)$ and no additional $1$ on the diagonal. So everything works fine.

At last let $G$ be one of the exceptional groups. Let the standard representation $\delta'$ embed $G$ into $\U(2n)$, and $\omega_1,\ldots,\omega_n$ be the weights of $\delta$. Then $\delta'$ has the weights $\omega_1,\ldots \omega_n$ and $\omega_{n+i}:=\omega_i^{-1}$ for $i=1\ldots n$, and the diagonal elements in $U(2n)$ coming from $G$ take the form $\diag(\omega_1(t),\ldots ,\omega_{2n}(t))$ for $t\in T$. Since the root lattice is contained in the weight lattice, every fundamental root is a linear combination of weights with integer coefficients, $\beta_i=\sum_{j=1}^nm_{ij}\omega_j$ say.
Note that for an element $y\in S^1$ we have $|1-y|+|1-y^{-1}|\leq 2(|1-zy|+|1-zy^{-1}|)$ for any $z\in S^1$ if $l(y)\leq \pi/2$. 
Let $m_i:=\sum_{j=1}^n|m_{ij}|$. Since we assumed $\tilde{\lt}(t)\leq \frac{1}{2r}$, $l(\beta_i(t))\leq \pi/2$ for every $i$ and we estimate
\begin{align*}
|1-\beta_i(t)|&= \tfrac{1}{2}(|1-\beta_i(t)|+|1-\beta_i(t)^{-1}|)\\
&\leq |1-z^{m_i}\beta_i(t)|+|1-z^{m_i}\beta_i(t)^{-1}|\\
&=\left|1-\prod_{j=i}^nz^{|m_{ij}|}\omega_j(t)^{m_{ij}}\right|+\left|1-\prod_{j=i}^nz^{|m_{ij}|}\omega_j(t)^{-m_{ij}}\right|\\
&\leq \sum_{j=1}^n|m_{ij}||1-z\omega_j(t)^{\sgn m_{ij}}|+\sum_{j=1}^n|m_{ij}||1-z\omega_j(t)^{-\sgn m_{ij}}|\\
&= \sum_{j=1}^n|m_{ij}|(|1-z\omega_j(t)^{m_{ij}}|+ |1-z\omega_j(t)^{-m_{ij}}|).
\end{align*}
By summing over all $i=1\ldots r$ we obtain
\[\sum_{i=1}^r|1-\beta_i(t)|\leq \sum_{i=1}^r\sum_{i=1}^n|m_{ij}||1-\omega_j(t)^{\pm 1}|\leq \sum_{j=1}^{2n}M|1-z\omega_j(t)|,\]
where $M:=\max_{j=1\ldots n}\sum_{i=1}^r|m_{ij}|$. By appropriate scaling of the two sums and taking the infimum over all $z$ we arrive at $\lt(t)\leq M\ell_1'(t)$ and also $\tilde{\lt}(g)\leq M\ell_1'(g)$ for all $g\in G$.

By looking up the tables in \cite{knapp02}, Appendix~C, we find that the highest weight $\omega$ for $\delta$ is a linear combination of simple roots with integer coefficients. All other weights differ from $\omega$ by an element of the root lattice and therefore every weight is a linear combination of the fundamental roots with integer coefficients. We write $\omega_j=\sum_{i=1}^r n_{ji}\beta_i$, where $n_{ji}\in\mathbb{Z}$. Then, using the special number $z=1$, similarly to the above calculation
\[\sum_{j=1}^n|1-z\omega_j(t)^{\pm 1}|\leq \sum_{j=1}^n|1-\omega_j(t)^{\pm 1}|\leq \tfrac{1}{2}\sum_{i=1}^rN|1-\beta_i(t)^{\pm 1}|\]
follows, where $N:=\max_{i=1\ldots r}\sum_{j=1}^n|n_{ji}|$. After rescaling $\ell_1'(t)\leq 2N\tilde{\lt}(t)$ follows. Setting $L:=\max(M,2N)$ finishes the proof.
\end{proof}

For the next theorem we also write $\tilde{\lt}$ for the function obtained as a pointwise ultralimit of the functions $\tilde{\lt}$ in ultraproducts of Lie groups.

\begin{theorem}
Let $G_n$ be compact connected quasisimple Lie groups. If $\bs{g}\in\bs{G}:=\prod_{\mathfrak{u}}G_n$ satisfies $\lt(\bs{g})>0$, then $N(\bs{g})=\bs{G}$. The set $\bs{N}$ of all $\bs{g}$ such that $\tilde{\lt}(\bs{g})=0$ is a normal subgroup and $\bs{G}/\bs{N}$ is simple.
\end{theorem}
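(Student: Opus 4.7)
I would first show that $\bs N$ is a normal subgroup by using the auxiliary length $\ell_1'$ from Proposition~\ref{prop:LOneLengthOnUnitaryGroups} and Lemma~\ref{lem:LOneLengthOnCompactLieGroups}. Since $\ell_1$ is an invariant pseudo length function on the ambient $\U(n)$, the function $\ell_1'$ is also an invariant pseudo length function on each $G_n$, so its ultralimit $\ell_1'(\bs g):=\lim_{\mathfrak{u}}\ell_1'(g_n)$ is a pseudo length function on $\bs G$ whose null-set is automatically a normal subgroup. By the bi-Lipschitz comparison in Lemma~\ref{lem:LOneLengthOnCompactLieGroups}, this null-set coincides with $\bs N$ for classical types; for exceptional types the comparison is only local and between $\ell_1'$ and $\lt$, but there the rank is bounded, so $\lt$ and $\tilde\lt$ differ by at most a factor depending on the Weyl group and the local hypothesis $\tilde\lt\le 1/(2r)$ is automatic along any null sequence.

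\textbf{Bounded generation.} Next, fix $\bs g$ with $\lt(\bs g)=\epsilon>0$ and an arbitrary $\bs h\in\bs G$. I would pass to representatives with $g_n,h_n\in T_n$ (the same maximal torus of $G_n$ for $\mathfrak{u}$-almost all $n$), which is possible because all maximal tori are conjugate and normal closures are invariant under conjugation. Applying Theorem~\ref{thm:BoundedGenerationOfSmallElementsInLieGroupsOfSmallRank} with the roles reversed (so that $\bs g$ plays the role of the generator $h$ in that theorem and $\bs h$ the role of the target $g$), one obtains an even integer $m_n=O(1/\epsilon)$ such that $h_n$ is a product of at most $4 m_n r_n^2$ conjugates of $g_n^{\pm 1}$. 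In the bounded-rank case this gives a uniform bound and hence $\bs h\in N(\bs g)$. The principal obstacle is the unbounded-rank regime: the factor $r_n^2$ diverges, so one must sharpen the proof of Theorem~\ref{thm:BoundedGenerationOfSmallElementsInLieGroupsOfSmallRank} to charge only those fundamental roots where $h_n$ has non-negligible component, exploiting that any fixed $\bs h$ has a controlled root-support even as the rank tends to infinity.

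\textbf{Simplicity of the quotient.} Finally, suppose $M$ is a normal subgroup of $\bs G$ with $M\not\subset\bs N$ and pick $\bs g\in M\setminus\bs N$, so $\tilde\lt(\bs g)>0$. By the definition of $\tilde\lt$ as a supremum over conjugates in $T$, for $\mathfrak{u}$-almost all $n$ there is $x_n\in G_n$ with $x_n g_n x_n^{-1}\in T_n$ realising at least half of $\tilde\lt(g_n)$, so $\bs t:=\bs x\bs g\bs x^{-1}$ lies in $M$ by normality and satisfies $\lt(\bs t)>0$. The bounded-generation statement then yields $N(\bs t)=\bs G\subseteq M$, whence $M=\bs G$. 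Therefore $\bs N$ is the unique maximal proper normal subgroup, and $\bs G/\bs N$ is simple.
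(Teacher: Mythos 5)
Your handling of the second and third assertions is essentially the paper's: normality of $\bs N$ is obtained from the invariant pseudo length function $\ell_1'$ together with the comparison in Lemma~\ref{lem:LOneLengthOnCompactLieGroups} (you are in fact more careful than the paper about the merely local comparison in the exceptional, hence bounded-rank, case), and simplicity of $\bs G/\bs N$ is deduced from the first assertion by conjugating a witness of $\tilde\lt(\bs g)>0$ into the torus, exactly as intended by the paper's one-line deduction. Your reversal of roles in Theorem~\ref{thm:BoundedGenerationOfSmallElementsInLieGroupsOfSmallRank}, using $\lt(h_n)\le 1$ and $\lt(g_n)\ge\epsilon/2$ to get $m=O(1/\epsilon)$, correctly settles the bounded-rank case, which is also the alternative route the paper mentions.

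The genuine gap is the unbounded-rank case of the first assertion, which you flag but do not close, and your proposed repair rests on a false premise. It is not true that "any fixed $\bs h$ has a controlled root-support as the rank tends to infinity": taking $h_n\in T_n$ with every fundamental-root angle equal to, say, $\pi/2$ gives a perfectly good element of $\bs G$ whose root-support is all of $[r_n]$ with $r_n\to\infty$. The leverage must come from the \emph{generator} $\bs g$, not the target: since each summand $l(\beta_i(g_n))/\pi$ in $\lt(g_n)$ is at most $1$ and $F_{g_n}$ is decreasing, the bound $\lt(g_n)\ge\epsilon/2$ forces $F_{g_n}(i)$ to stay bounded below by a constant multiple of $\epsilon$ for all $i$ up to a fixed proportion of the rank; combined with $F_{h_n}\le 1$ and the fact that $F_{h_n}$ vanishes beyond $r_n$, this verifies the hypothesis $F_{h_n}(ki+1)\le mF_{g_n}(i+1)$ of Lemma~\ref{lem:BoundedGenerationOfSmallElementsInLieGroupsOfLargeRank} with $k,m=O(1/\epsilon)$, and that lemma then produces a number of conjugates independent of the rank. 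The paper itself short-circuits the whole first assertion by citing Lemma~5.19 of \cite{nikolovsegal12}; either that citation or the route through Lemma~\ref{lem:BoundedGenerationOfSmallElementsInLieGroupsOfLargeRank} is needed where your sketch currently stops.
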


\begin{proof} We can assume $g_n\in T_n$, where $T_n$ is a maximal torus of $G_n$. Then the first part of the theorem follows already from Lemma~5.19 in \cite{nikolovsegal12}. For groups of bounded rank we can alternatively use Theorem~\ref{thm:BoundedGenerationOfSmallElementsInLieGroupsOfSmallRank}.

By Lemma~\ref{lem:LOneLengthOnCompactLieGroups} $\tilde{\lt}(\bs{g})=0$ is equivalent to $\ell_1'(\bs{g})=0$. Because $\ell_1'$ is a pseudo length function, $\bs{N}$ is a normal subgroup. From the first part of the theorem we deduce that $\bs{G}/\bs{N}$ is simple.
\end{proof}

We define $\bs{g}\preceq \bs{h}$ for $\bs{g},\bs{h}\in \bs{G}\setminus \{1\}$ as in Paragraph~\ref{ssec:NormalSubgroupsOfUltraproductsOfFiniteSimpleGroups}, except that we use $\ell_1'$ as our length of choice. Then Lemma~\ref{lem:OrderAndLengthConnection} immediately implies $\bs{g}\preceq \bs{h}$ whenever $\bs{g}\in N(\bs{h})$.

\begin{lemma}
Let $\bs{G}$ be an ultraproduct of compact connected simple Lie groups $G_n$ of bounded rank and assume $\bs{g}\preceq \bs{h}$ for non-trivial elements $\bs{g}$ and $\bs{h}$ in $\bs{G}$. Then $\bs{g}\in N(\bs{h})$. 
\end{lemma}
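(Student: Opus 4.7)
The approach is to convert the $\ell_1'$-bound coming from $\bs{g}\preceq\bs{h}$ into an $\lt$-bound on torus representatives and then apply Theorem~\ref{thm:BoundedGenerationOfSmallElementsInLieGroupsOfSmallRank}. Since the rank is bounded, after replacing $\mathfrak{u}$ by $\mathfrak{u}|U$ for a suitable $U\in\mathfrak{u}$ I may assume that all $G_n$ have the same rank $r$ and the same Dynkin type, so that the constant $L$ of Lemma~\ref{lem:LOneLengthOnCompactLieGroups} is uniform in $n$. Using compactness of $T_n$ and continuity of $\lt$, pick $t_{g,n}\in C_{G_n}(g_n)\cap T_n$ and $t_{h,n}\in C_{G_n}(h_n)\cap T_n$ realizing $\lt(t_{g,n})=\tilde{\lt}(g_n)$ and $\lt(t_{h,n})=\tilde{\lt}(h_n)$, and write $\bs{s},\bs{t}\in\bs{G}$ for their ultraproducts; since $\bs{s}$ is conjugate to $\bs{g}$ and $\bs{t}$ to $\bs{h}$, it suffices to show $\bs{s}\in N(\bs{t})$. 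Conjugation-invariance of $\ell_1'$ together with $\bs{g}\preceq\bs{h}$ supplies a constant $K$ with $\ell_1'(t_{g,n})\leq K\,\ell_1'(t_{h,n})$ for $\mathfrak{u}$-almost all $n$.

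Next I would split on $\tilde{\lt}(\bs{h}):=\lim_{\mathfrak{u}}\tilde{\lt}(h_n)$. If $\tilde{\lt}(\bs{h})>0$ then $\lt(\bs{t})>0$, so the preceding theorem on ultraproducts of compact connected quasisimple Lie groups yields $N(\bs{t})=\bs{G}$ and hence $\bs{g}\in N(\bs{h})$. In the remaining case $\tilde{\lt}(\bs{h})=0$, both $\tilde{\lt}(h_n)$ and $\tilde{\lt}(g_n)$ tend to $0$ along $\mathfrak{u}$: in the classical case this is immediate from the global estimate of Lemma~\ref{lem:LOneLengthOnCompactLieGroups} combined with $\ell_1'(g_n)\leq K\,\ell_1'(h_n)$; in the exceptional case it follows from the same inequality together with the fact that $\ell_1'$ and $\tilde{\lt}$ are continuous pseudo length functions on the compact group $G_n$ vanishing on the same central subgroup, so that $\ell_1'(h_n)\to 0$ forces $\ell_1'(g_n)\to 0$ and both vanishing sets coincide. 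Thus $\tilde{\lt}(g_n),\tilde{\lt}(h_n)\leq 1/(2r)$ for $\mathfrak{u}$-almost all $n$, and Lemma~\ref{lem:LOneLengthOnCompactLieGroups} applies uniformly to give
\[\lt(t_{g,n})\leq L\,\ell_1'(t_{g,n})\leq LK\,\ell_1'(t_{h,n})\leq L^2 K\,\lt(t_{h,n}).\]
Choosing an even integer $m\geq L^2 K$, Theorem~\ref{thm:BoundedGenerationOfSmallElementsInLieGroupsOfSmallRank} yields $t_{g,n}\in(t_{h,n}^{G_n}\cup t_{h,n}^{-G_n})^{4mr^2}$ for $\mathfrak{u}$-almost all $n$; since the bound $4mr^2$ is independent of $n$, this passes to the ultraproduct and gives $\bs{s}\in N(\bs{t})$ and hence $\bs{g}\in N(\bs{h})$.

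The main obstacle is the restriction $\tilde{\lt}\leq 1/(2r)$ in the exceptional-group part of Lemma~\ref{lem:LOneLengthOnCompactLieGroups}, which is why the dichotomy on $\tilde{\lt}(\bs{h})$ is natural: the positive case is dispatched directly by the preceding ultraproduct theorem, bypassing the lemma entirely, while the vanishing case guarantees that both $\bs{g}$ and $\bs{h}$ are eventually small enough for the local comparison between $\ell_1'$ and $\lt$ to kick in.
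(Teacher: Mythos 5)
Your argument is correct and takes essentially the same route as the paper: convert the $\ell_1'$-comparison coming from $\bs{g}\preceq\bs{h}$ into a $\lt$-comparison on torus representatives via Lemma~\ref{lem:LOneLengthOnCompactLieGroups}, then apply Theorem~\ref{thm:BoundedGenerationOfSmallElementsInLieGroupsOfSmallRank}. The paper's proof merely asserts ``the hypothesis assures $\lt(g_n)\leq m\lt(h_n)$''; your dichotomy on $\tilde{\lt}(\bs{h})$, which handles the fact that the comparison for exceptional groups is only local, is a careful filling-in of that step rather than a different approach.
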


\begin{proof}
The hypothesis assures $\lt(g_n)\leq m\lt(h_n)$ for almost all $n$ and a suitable constant $m$. Following Theorem~\ref{thm:BoundedGenerationOfSmallElementsInLieGroupsOfSmallRank} we immediately obtain $g_n\in C(h_n^{\pm 1})^{4mr^2}$, where $r$ is the bound on the rank of the groups $G_n$. Hence \[\bs{g}\in C(\bs{h}^{\pm 1})^{4mr^2}\subset N(\bs{h})\]
\end{proof}

We are now ready to prove the analogue of Theorem~\ref{thm:NormalSubgroupsOfUltraproductsOfFiniteSimpleGroups} for Lie groups of bounded rank.

\begin{theorem}\label{thm:NormalSubgroupsOfUltraproductsOfLieGroups}
Let $G_n$ be compact connected simple Lie groups of bounded rank. Then the set $\mathfrak{N}$ of normal subgroups of $\bs{G}:=\prod_{\mathfrak{u}}G_n$ is linearly ordered by inclusion. 
\end{theorem}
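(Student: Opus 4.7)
The plan is to follow the template of the proof of Theorem~\ref{thm:NormalSubgroupsOfUltraproductsOfFiniteSimpleGroups} essentially verbatim, since all the geometric and analytic work has already been carried out. The key equivalence to be established is that for non-identity $\bs{g},\bs{h}\in\bs{G}$ one has $\bs{g}\in N(\bs{h})$ if and only if $\bs{g}\preceq\bs{h}$, where the preorder is defined through the pseudo length function $\ell_1'$ from Lemma~\ref{lem:LOneLengthOnCompactLieGroups}. One direction is Lemma~\ref{lem:OrderAndLengthConnection} applied to $\ell_1'$ (which is invariant by construction). The converse is exactly the content of the lemma immediately preceding this theorem, whose proof uses Theorem~\ref{thm:BoundedGenerationOfSmallElementsInLieGroupsOfSmallRank} and the asymptotic equivalence of $\ell_1'$ and $\tilde{\lt}$ (hence essentially of $\ell_1'$ and $\lt$ after passing to conjugates in the torus) provided by Lemma~\ref{lem:LOneLengthOnCompactLieGroups}.

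Once this equivalence is in hand, the standard argument proceeds as follows. Define the convex equivalence relation $\bs{g}\equiv\bs{h}$ on non-identity elements to mean
\[
0<\lim_{\mathfrak{u}}\frac{\ell_1'(g_n)}{\ell_1'(h_n)}<\infty,
\]
so that the quotient of $\bs{G}\setminus\{1\}$ by $\equiv$ is totally ordered by the induced order $\preceq$. Using the equivalence from the previous paragraph, the map $\bs{g}\mapsto N(\bs{g})$ factors through $\equiv$ to yield an order isomorphism between this totally ordered quotient and the collection of normal closures of non-identity elements of $\bs{G}$, ordered by inclusion. In particular, the set of such normal closures is linearly ordered.

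Finally, Lemma~\ref{lem:OrderOfNormalSubgroupsAndOrderOfNormalClosures} transfers linearity from normal closures of single elements to the entire lattice $\mathfrak{N}$ of normal subgroups of $\bs{G}$, completing the proof.

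I do not expect any real obstacle here beyond bookkeeping. The only subtle point is that $\ell_1'$ is only invariant after the infimum over the center of $\U(n)$ is taken, but this is built into its definition, so it is an invariant pseudo length function on each $G_n$ and hence the ultralimit produces a well-defined invariant function on $\bs{G}$ to which Lemma~\ref{lem:OrderAndLengthConnection} applies. A minor verification is that the assumption $\tilde{\lt}(\bs{g})\le 1/(2r)$ in Lemma~\ref{lem:LOneLengthOnCompactLieGroups} for exceptional groups is automatic in the bounded-rank ultraproduct setting after passing to a sufficiently small conjugate, since elements of bounded rank groups can always be made to lie within small neighborhoods of the identity in the torus; alternatively, in the bounded-rank setting one may discard exceptional types a priori by the ultrafilter dichotomy already used in Section~\ref{sec:UltraproductsOfFiniteSimpleGroups}.
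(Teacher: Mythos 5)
Your proposal is correct and follows essentially the same route as the paper: the equivalence of $\bs{g}\in N(\bs{h})$ with $\bs{g}\preceq\bs{h}$ via Lemma~\ref{lem:OrderAndLengthConnection} and the lemma immediately preceding the theorem, the order isomorphism of the set of normal closures with a totally ordered quotient (the paper uses $\bs{K}/\equiv$ with $\bs{K}=\prod_{\mathfrak{u}}[0,1]$), and finally Lemma~\ref{lem:OrderOfNormalSubgroupsAndOrderOfNormalClosures}. One caveat on your closing remark: exceptional types cannot be discarded by the ultrafilter dichotomy in the bounded-rank Lie case (the paper explicitly notes they must be kept here, in contrast to Proposition~\ref{prop:BoundedRankUltraproductIsSimple}), and conjugation does not shrink $\tilde{\lt}$, so neither of your proposed fixes for the restriction in Lemma~\ref{lem:LOneLengthOnCompactLieGroups} works as stated --- though the paper itself leaves this point equally implicit and it does not affect the main argument.
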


\begin{proof}
 Exactly as in the proof of Theorem~\ref{thm:NormalSubgroupsOfUltraproductsOfFiniteSimpleGroups} we show that the set $\mathfrak{N}_0$ of normal closures of elements of $\bs{G}$ is order isomorphic to a subset of $\bs{K}/\equiv$. This is the quotient of $\bs{K}:=\prod_{\mathfrak{u}}[0,1]$ by the equivalence relation $\equiv$, which defines $\bs{a}$ and $\bs{b}$ equivalent if
\[0<\lim_{\mathfrak{u}}\frac{a_n}{b_n}<\infty.\]
Because a maximal torus $T$ in a Lie group of rank $r$ is isomorphic to the standard torus $(S^1)^r$, it is clear that for any prescribed $a$ in $[0,1]$ there is an element in $T$ with length $a$. Hence $\mathfrak{N}_0$ is isomorphic to $\bs{K}/\equiv$.
 Now an application of Lemma~\ref{lem:OrderOfNormalSubgroupsAndOrderOfNormalClosures} shows that also $\mathfrak{N}$ is linearly ordered.
\end{proof}

Unfortunately, unlike for finite simple groups, the theorem turns out to be false if there is no bound on the rank. We illustrate this fact as follows.

Let $G_n:=\SU(2n+1)$. We consider elements
\begin{align*}
g_n&=\diag(e^{\imag 2\pi (n-1)/n}, e^{\imag\pi/n^2},e^{\imag \pi /n^2}, \ldots ,e^{\imag \pi /n^2},1,\ldots,1),\\
 h_n&=\diag(-1,-1,1,\ldots,1)
\end{align*}
in the maximal torus $T$ of diagonal matrices, where $n$ entries of $g_n$ equal $1$.
To make the counterexample meaningful we have to pass to $\PSU(2n+1)$, or equivalently use pseudo length functions that vanish on $Z=Z(\SU(2n+1))$. 
If we assume that $\bs{g}\in N(\bs{h})$, then Lemma~\ref{lem:OrderAndLengthConnection} implies the existence of a constant $m$ such that $\inf_{z\in Z}\lr(zg_n)\leq m\inf_{z\in Z}\lr(zh_n)$ for infinitely many $n$. But obviously the left hand side converges to $1/2$ and the right hand side equals $\frac{2m}{2n+1}$, which does not fit together well. On the other hand $\bs{h}\in N(\bs{g})$ would imply
\[\tilde{\lt}(h_n)\leq 2\ell_1'(h_n)\leq 2m\ell_1'(g_n)\leq 4m\tilde{\lt}(g_n),\]
 for some (other) constant $m$ and infinitely many $n$. After ordering the entries of $g_n$ and $h_n$ appropriately we evaluate $\tilde{\lt}(g_n)\leq \frac{4}{n(2n+1)}$ and $\tilde{\lt}(h_n)=\frac{4}{2n+1}$ to obtain a contradiction once again, and must conclude that neither $N(\bs{g})\subset N(\bs{h})$ nor $N(\bs{h})\subset N(\bs{g})$ holds. Therefore the normal subgroups in the ultraproduct of (projective) unitary groups cannot be linearly ordered.

Despite this setback we try to see how far we can get. Let $g$ be an element a compact connected quasisimple Lie group of rank $n$ with maximal torus $T$. For the rest of the section we will call $t\in C(g)\cap T$ \deph{optimal} if the following holds: For all $s\in C(g)\cap T$ we have $|1-\beta_1(t)|\geq |1-\beta_1(s)|$, and for all $k=1\ldots n-1$ the equation $\sum_{i=1}^k |1-\beta_i(t)| = \sum_{i=1}^k|1-\beta_i(s)|$ implies $|1-\beta_{k+1}(t)|\geq |1-\beta_{k+1}(s)|$. We define a function $F_g:\mathbb{N}\rightarrow [0,1]$ by 
\[F_g(i):=\left\{\begin{array}{l l }
		    \tfrac{1}{2}|1-\beta_{\sigma(i)}(t)|, & i\in\mathbb[n],\\
		    0, & i> n,
                 \end{array}\right.
\]
where $t$ is optimal and $\sigma$ is a permutation of $[n]$ such that $F_g(i)\geq F_g(i+1)$ results for all $i\geq 1$. Note that there is always an optimal $t$ and for different optimal elements $s$ and $t$ the differences $|1-\beta_i(t)|$ and $|1-\beta_i(s)|$ are the same. Hence $F_g$ is well defined.

Let the sequences of functions $(F_n)$ and $(H_n)$ be representatives of elements $\bs{F}$ and $\bs{H}$, respectively, in the ultraproduct
\[\bs{M}:=\prod_{\mathfrak{u}}\mathscr{F}_n(\mathbb{N},[0,1]),\]
where $\mathscr{F}_n(\mathbb{N},[0,1])$ is the set of decreasing functions $\mathbb{N}\rightarrow [0,1]$ with support contained in $[n]$.
We let $\bs{F}\preceq \bs{H}$ if and only if there are constants $c$ and $k\in\mathbb{N}$ such that for $\mathfrak{u}$-almost all $n$
\[F_n(ki+1)\leq cH_n(i+1),\]
whenever $i\geq 0$. It is clear that this defines a quasiorder on the space $\bs{M}$.
We let $\bs{F}\equiv \bs{H}$ if $\bs{F}\preceq \bs{H}$ and $\bs{H}\preceq \bs{F}$ to obtain the the quotient space $\bs{M}/\equiv$ with the induced ordering.

If $\bs{g}\in \bs{G}\setminus\{1\}$ we define $F_{\bs{g}}$ as the element in $\bs{M}$ associated with $(F_{g_n})_n$. With these two notions at hand let $\bs{g}\preceq \bs{h}$ be equivalent to $F_{\bs{g}}\preceq F_{\bs{h}}$.

\begin{lemma}\label{lem:SingularValuesEstimate}
 Let $g$ and $h$ be elements in a classical compact connected quasisimple Lie group $G$. Then, if $i,j\geq 0$,
\[F_{gh}(6i+6j+1)\leq 2F_g(i+1)+2F_h(j+1).\]
\end{lemma}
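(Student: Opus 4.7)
The plan is to reduce the inequality to a singular-value inequality in a unitary group via the standard embedding, and then translate between weights and fundamental roots using the explicit combinatorics of classical root systems.

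First, I would fix torus representatives $t_g, t_h$ for $g$ and $h$ (optimally chosen in their conjugacy classes) and embed $G \hookrightarrow \U(m)$ via the standard representation $\phi$ of Lemma~\ref{lem:LOneLengthOnCompactLieGroups}, so that $u := \phi(g)$ and $v := \phi(h)$ are unitary and $uv = \phi(gh)$. Write $\Phi_x(k)$ for the $k$-th largest value of $|1 - z \omega_j(t_x)|$ over the weights $\omega_j$ of the standard representation, after taking an infimum over the central shift $z \in Z(\U(m))$; this is the ``sorted weight data'' of $x$. For a unitary matrix $w$ the singular values of $1 - w$ coincide with the numbers $|1 - \mu|$ over eigenvalues $\mu$ of $w$, so $\Phi_x(k)$ is exactly $\sigma_k(1 - \phi(x))$ (up to the central shift).

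Second, I would apply the Ky~Fan singular value inequality $\sigma_{k+l-1}(A+B) \leq \sigma_k(A) + \sigma_l(B)$ to the identity $1 - uv = (1 - u) + u(1 - v)$. Since multiplication by a unitary on the left does not alter singular values, this yields
\[
\Phi_{gh}(k + l - 1) \leq \Phi_g(k) + \Phi_h(l).
\]
This is the clean ``Ky~Fan'' form of the sought inequality at the level of the standard representation.

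Third, I would convert sorted weight data $\Phi_x$ into sorted root data $F_x$ (and vice versa) by using the explicit structure of each classical group (cases $\SU_n$, $\Sp_{2n}$, $\SO_{2n+1}$, $\SO_{2n}$ as tabulated in the proof of Lemma~\ref{lem:LOneLengthOnCompactLieGroups}). In each case every fundamental root $\beta_i$ is a product of at most two weights (or a square of one weight for some short roots), so $|1 - \beta_i(t_x)| \leq |1 - \omega_j(t_x)| + |1 - \omega_k(t_x)|$; conversely, modulo the center of $\U(m)$, every weight $\omega_j$ can be written as a product of at most three consecutive roots times a central element (this is where the Dynkin diagrams being paths of maximal degree two is used). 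Pushing these pointwise bounds through to the \emph{sorted} sequences introduces a bounded multiplicative constant ($\leq 2$) and a bounded shift in the index in each direction; multiplying the shifts together with the $+1$ from Ky~Fan produces the specific form $F_{gh}(6i + 6j + 1) \leq 2F_g(i+1) + 2F_h(j+1)$.

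The main obstacle is the careful bookkeeping in Step three: while the direction ``root $\preceq$ weight'' is immediate from the triangle inequality on $S^1$, the opposite direction ``weight $\preceq$ root'' must be argued modulo a central shift in $\U(m)$, which is precisely why the definition of $\ell'_1$ (and hence $\Phi$) takes an infimum over $Z(\U(m))$. One must also verify that the four classical families all obey the same quantitative bound; the short roots $\beta_n$ in $\Sp$, $\SO_{2n+1}$, and $\SO_{2n}$ differ in form (a square, a single $t_n$, and a product $t_{n-1}t_n$, respectively), so the shift constants have to be checked uniformly. Once this is done, the constant $6$ emerges as the product of the shifts from Ky~Fan and the two root/weight translations, and the factor $2$ is the cumulative multiplicative constant.
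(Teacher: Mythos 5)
Your overall architecture coincides with the paper's: embed via the standard representation, identify the relevant data with singular values of $1-zg$ (infimum over central $z\in Z(\U(m))$), apply the Ky Fan inequality $s_{i+j+1}(A+B)\leq s_{i+1}(A)+s_{j+1}(B)$ to $1-xygh=(1-xg)yh+(1-yh)$, and then translate back and forth between sorted singular values and the sorted root data $F_g$. The Ky Fan step is correct and is exactly what the paper does. The problem is entirely in your Step three, which is where the actual content of the lemma lives, and it contains both an unproved assertion and a false one.

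First, the direction $F_g(\text{shifted index})\leq s_{i+1}(1-zg)$ does not follow by ``pushing pointwise bounds through to the sorted sequences.'' The triangle inequality gives $|1-\beta_k(t)|\leq|1-zt_k|+|1-zt_{k+1}|$ pointwise, but to conclude that the $(2i+1)$-st largest root difference is at most twice the $(i+1)$-st largest singular value you need a pigeonhole count: if $F_g(2i+1)>2|z-t_{\tau(i+1)}|$ then each of the top $2i+1$ root differences forces one of its two indices into the top-$i$ set $\{\tau(1),\dots,\tau(i)\}$, and since an index can be shared by at most two consecutive roots this crams at least $i+1$ elements into a set of size $i$. This counting argument (and its variants for $\SO$ and $\Sp$, where the exceptional root $\beta_n$ forces the shift $3i+1$ instead of $2i+1$) is the technical heart of the lemma and cannot be replaced by the generic remark that sorting costs ``a bounded shift.'' Second, your converse translation is based on the claim that every weight of the standard representation is a product of at most three consecutive fundamental roots times a central element; this is false already for $\SU(n)$, where $\omega_j(t)=t_j$ and $t_jt_n^{-1}=\prod_{k=j}^{n-1}\beta_k(t)$ is a product of up to $n-1$ roots, with no single central shift fixing this for all $j$ simultaneously. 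What actually makes this direction work is the optimality of the torus representative, which you invoke only as a normalization: after conjugating by permutation matrices, optimality yields $|t_i-t_{i+1}|\geq|t_i-t_j|$ for all $j>i$, so choosing the specific (non-optimized) central element $z=t_{n-1}$ bounds each sorted eigenvalue distance $|t_{n-1}-t_{\tau(i)}|$ by a \emph{single} sorted root difference, giving $\underline{s}_i(g)\leq F_g(i)$ directly. Without these two arguments the constants $6$ and $2$ in the statement are not justified.
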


\begin{proof}
Consider the standard embedding of $G$ in $\U(n)$. The singular values $s_i(1-g)$ of $1-g$ are defined as 
\[s_i(1-g)=\sqrt{\lambda_i((1-g)^*(1-g))},\]
where  $(1-g)^*$ is the adjoint operator of $(1-g)$, $\lambda_i=\lambda_i((1-g)^*(1-g))$ is some eigenvalue of $(1-g)^*(1-g)$, and we assume the $s_i(1-g)$ in decreasing order. Then $s_i(1-g)=|1-\lambda_i|$.
We define
\[\underline{s}_i(g):=\inf_{z\in Z(\U(n))}\tfrac{1}{2} s_i(1-zg).\]
Observe that since $s_{i+1}(1-zg)\leq s_i(1-zg)$ holds for all $z$, also $\underline{s}_{i+1}(g)\leq \underline{s}_i(g)$ is true for $i=1\ldots n$.

First study $G=\SU(n)$ and let $t\in C(g)\cap T$ be optimal. Then $F_g(i)=\frac{1}{2}|t_{\sigma(i)}-t_{\sigma(i)+1}|$. Let $z\in Z(\U(n))$ and $\tau$ a permutation such that $|z-t_{\tau(1)}|\geq |z-t_{\tau(2)}|\geq \ldots \geq |z-t_{\tau(n)}|$.
If we assume the existence of $i$ such that $|t_{\sigma(2i+1)}-t_{\sigma(2i+1)+1}|>2|z-t_{\tau(i+1)}|$, then for all $k=1\ldots 2i+1$ the estimate $2|z-t_{\tau(i+1)}|<|z-t_{\sigma(k)}|+|z-t_{\sigma(k)+1}|$ follows. Hence $|z-t_{\sigma(k)}|>|z-t_{\tau(i+1)}|$ or $|z-t_{\sigma(k)+1}|>|z-t_{\tau(i+1)}|$ and $\sigma(k)\in\{\tau(1),\ldots,\tau(i)\}$ or $\sigma(k)+1\in\{\tau(1),\ldots,\tau(i)\}$. It might happen that $\sigma(k)=\sigma(l)+1$ for some $1\leq k,l\leq i$, but not more than $i$ times. Thus $\{\tau(1),\ldots, \tau(i)\}$ contains at least $i+1$ elements, a contradiction. Therefore $F_g(2i+1)\leq s_{i+1}(1-zg)$ holds for all $i$, independently of $z$.

If $i,j\geq 0$ and $i+j+1\leq n$ we have for central elements $x$, $y$ in $\U(n)$ by the Ky Fan singular value inequality 
\begin{align*}
s_{i+j+1}(1-xygh)&=s_{i+j+1}((1-xg)yh+(1-yh))\\
&\leq s_{i+1}((1-xg)yh)+s_{j+1}(1-yh)\\
&=s_{i+1}(1-xg)+s_{j+1}(1-yh).
\end{align*}
(Confer \cite{fan51}.) Combining this estimate with the previous one we obtain
\[F_{gh}(2i+2j+1)\leq s_{i+1}(1-xg)+s_{j+1}(1-yh).\]
 Taking the infimum over all $x,y\in Z(\U(n))$ on both sides yields
\[F_{gh}(2i+2j+1)\leq 2\underline{s}_{i+1}(g)+2\underline{s}_{j+1}(h).\]
For the other classical groups the proof is similar and we will point out where slight changes have to be made. Consider the case $G=\SO(2n+1)\subset \U(2n+1)$. Then the contradiction at the beginning of the proof above can be produced by assuming $|1-\beta_{\sigma(3i+1)}(t)|>2|z-t_{\tau(i+1)}|$ (when $3i+1\leq n$), where $\tau$ satisfies $|z-t_{\tau(1)}|\geq |z-t_{\tau(2)}|\geq \ldots \geq |z-t_{\tau(2n+1)}|$. Since we have $\beta_n(t)=t_n$, for $i_0$ such that $\sigma(i_0)=n$ we must distinguish two cases. If $i_0=1$ the maximality of $|z-t_{\tau(1)}|$ is contradicted by $2|z-t_{\tau(1)}|<|z-t_{\sigma(1)}|+|z-1|$. (Note that $|z-1|$ is a singular value of $z-g$.) For other $i_0$ the proof works as above, and we use the factor $3$ to be able to ignore $i_0$ when deducing the contradiction. In $\SO(2n)$ or $\Sp(2n)$ we proceed in analogy with the procedure for $\SO(2n+1)$.
Hence for these groups
\[F_{gh}(3i+3j+1)\leq 2\underline{s}_{i+1}(g)+2\underline{s}_{j+1}(h).\]
follows.

Consider $\SU(n+1)$ and let $t$ be optimal. Because we can conjugate with arbitrary permutation matrices of determinant $1$ in $\SU(n+1)$, by definition of optimality 
$|t_i-t_{i+1}|\geq |t_i-t_j|$ for all $i=1\ldots n-2$, $j\geq i+1$. Let $\tau$ be a permutation such that $|t_{n-1}-t_{\tau(1)}|\geq |t_{n-1}-t_{\tau(2)}|\geq \ldots \geq |t_{n-1}-t_{\tau(n)}|$. Then
 \begin{align*}
2\underline{s}_i(g)&\leq \inf_{z\in Z(\U(n))} s_i(z-t)\\
&\leq s_i(t_{n-1}-t)\\
&= |t_{n-1}-t_{\tau(i)}|\\
&\leq |t_{\tau(i)}-t_{\tau(i)+1}|\\
& \leq 2F_g(\sigma^{-1}\tau(i))  
 \end{align*}
 if $\tau(i)\leq n-2$. If $\tau(i)=n-1$ we have $\underline{s}_i(g)\leq 0$ and if $\tau(i)=n$, then $2\underline{s}_i(g)\leq |t_{n-1}-t_n|$. Because $F_g$ is decreasing by definition and we can estimate each $\underline{s}_i(g)$ from above with a unique  value of $F_g$, even
\[\underline{s}_i(g)\leq F_g(i)\]
follows for all $i=1\ldots n$.

Now let $G=\SO(2n+1)$ and $t$ optimal. Then $|t_i-t_{i+1}|\geq |t_i-t_j^{\pm 1}|$ for all $i=1\ldots n-1$, $i+1\leq j\leq n$. Let $\tau$ be a permutation such that $|t_n-t_{\tau(1)}|\geq |t_n-t_{\tau(2)}|\geq \ldots \geq |t_n-t_{\tau(n)}|$. We have to take into account that $|t_n-t_i|$ and $|t_n-t_i^{-1}|$ might be of comparable size. Therefore, introducing the factor $2$, for $i\geq 0$
 \[2\underline{s}_{2i+1}(g) \leq s_{2i+1}(t_n-t)\leq |t_n-t_{\tau(i+1)}|\leq |t_{\tau(i+1)}-t_{\tau(i+1)+1}| \leq 2F_g(\sigma^{-1}\tau(i+1))\]
 if $\tau(i+1)\neq n$, and $\underline{s}_{2i+1}(g)\leq 0$ otherwise. Hence
$\underline{s}_{2i+1}(g)\leq F_g(i)$ for all $i=1\ldots n$.

In $G=\SO(2n)$ we have  $|t_i-t_{i+1}|\geq |t_i-t_j^{\pm 1}|$ for all $i=1\ldots n-2$, $i+1\leq j\leq n$. Let $\tau$ be a permutation as above corresponding to $t_{n-1}$. Then we proceed as above to obtain $\underline{s}_{2i+1}(g)\leq F_g(i)$ for all $i=1\ldots n$. The case of $\Sp(2n)$ is similar.

Combining the different estimates finishes the proof.
\end{proof}

\begin{proposition}\label{prop:SingularValueOrderingAndLieGroupElementsOne}
 Let $\bs{g}$ and $\bs{h}$, both not equal to $1$, be elements in an ultraproduct of compact connected simple Lie groups $G_n$ such that $\bs{g}\in N(\bs{h})$.
Then $\bs{g}\preceq \bs{h}$.
\end{proposition}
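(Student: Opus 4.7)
The plan is to reduce everything to Lemma~\ref{lem:SingularValuesEstimate} and then induct on the number of factors. Since $\bs{g}\in N(\bs{h})$, there is a fixed integer $m$ such that for $\mathfrak{u}$-almost all $n$ the element $g_n$ is a product of $m$ conjugates of $h_n^{\pm 1}$. Two preliminary observations trivialize the ``conjugation and inversion'' in these factors. First, $F_g$ depends only on the $G$-conjugacy class of $g$, since the defining optimal $t$ is sought in $C(g)\cap T$. Second, the eigenvalues of $g^{-1}$ are the complex conjugates of those of $g$, so $|1-\beta_i(t^{-1})|=|1-\overline{\beta_i(t)}|=|1-\beta_i(t)|$ and hence $F_{g^{-1}}=F_g$. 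Consequently every factor appearing in the product has the same $F$-function, namely $F_{h_n}$.

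Focusing on the classical case, write $g_n=b_n a_{m,n}$ with $b_n$ a product of $m-1$ conjugates of $h_n^{\pm 1}$. Apply Lemma~\ref{lem:SingularValuesEstimate} to obtain
\[F_{g_n}(6i+6j+1)\leq 2F_{b_n}(i+1)+2F_{h_n}(j+1).\]
Inducting on $m$ with constants $K_1=1$, $C_1=1$: once $F_{b_n}(K_{m-1}i+1)\leq C_{m-1}F_{h_n}(i+1)$ is known, substitute $i\mapsto K_{m-1}i$ and then $j:=K_{m-1}i$ above, using that $F_{h_n}$ is decreasing, to conclude
\[F_{g_n}(12K_{m-1}i+1)\leq (2C_{m-1}+2)F_{h_n}(i+1).\]
So $K_m:=12K_{m-1}$ and $C_m:=2C_{m-1}+2$ produce constants $K,C$ independent of $n$ with $F_{g_n}(Ki+1)\leq CF_{h_n}(i+1)$ for all $i\geq 0$ and $\mathfrak{u}$-almost all $n$. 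By the definition of the order on $\bs{M}$ this is exactly $F_{\bs{g}}\preceq F_{\bs{h}}$, i.e.\ $\bs{g}\preceq \bs{h}$.

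The exceptional-type case is an afterthought: there the rank of $G_n$ is uniformly bounded (by $8$), so $F_{g_n}$ and $F_{h_n}$ are supported in $[1,8]$ and $F_{g_n}(1)$ is comparable to $\lt(g_n)$ up to a constant depending only on the rank. Theorem~\ref{thm:NormalSubgroupsOfUltraproductsOfLieGroups} already linearly orders the normal subgroups, so $\bs{g}\in N(\bs{h})$ forces $\lt(g_n)\leq m\lt(h_n)$ for $\mathfrak{u}$-almost all $n$, from which $F_{\bs{g}}\preceq F_{\bs{h}}$ follows at once by taking $k$ larger than the rank bound so that $F_{g_n}(ki+1)=0$ for $i\geq 1$ and handling $i=0$ through the comparison with $\lt$.

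The main obstacle is bookkeeping the factor $6$ appearing in Lemma~\ref{lem:SingularValuesEstimate}: each time one more conjugate is peeled off the product the index on the left inflates by a factor $12$, so $K_m$ grows like $12^m$. This is harmless because $m$ is fixed by the hypothesis $\bs{g}\in N(\bs{h})$, but it is the structural reason why we need the flexibility of the quasiorder $\preceq$ on $\bs{M}$ (permitting an arbitrary constant rescaling of the index) rather than a pointwise comparison of $F_{\bs{g}}$ and $F_{\bs{h}}$.
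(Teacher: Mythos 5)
Your proof is correct and follows essentially the same route as the paper: write $g_n$ as a bounded product of conjugates of $h_n^{\pm1}$ for $\mathfrak{u}$-almost all $n$ and iterate Lemma~\ref{lem:SingularValuesEstimate}, using that $F$ is invariant under conjugation and inversion. The only difference is bookkeeping --- the paper gets a linear index inflation $6ki+1$ (in effect applying the Ky~Fan inequality to all $k$ summands at once) where your one-factor-at-a-time induction gives $12^{m-1}i+1$ --- which is immaterial for the quasiorder $\preceq$, and your separate treatment of the bounded-rank exceptional case matches the paper's implicit restriction to the ``interesting case'' of unbounded rank.
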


\begin{proof}
Let $G_n$ have rank $m_n$ and consider the interesting case $m_n\rightarrow_{\mathfrak{u}}\infty$. We assume that $\bs{g}$ is a product of $k$ conjugates of $\bs{h}^{\pm1}$. This implies that $g_n\in G_n$ is a product of not more than $6k$ conjugates of $h_n^{\pm1}$ for $\mathfrak{u}$-almost all $n$. By conjugating we can assume $g_n$ and $h_n$ in a maximal torus of $G_n$. We only have to take care of $m_n$ sufficiently larger than $6k$. Imagine $G_n$ embedded in a unitary group by the standard representation, in order to use Lemma~\ref{lem:SingularValuesEstimate}. In a group of such a large rank now for $i\geq 0$ 
\[F_g(6ki+1)\leq 2^k6kF_h(i+1)\]
holds, because $F_h$ is invariant under conjugation of $h$ with unitaries.
\end{proof}

A graph $X$ has \deph{coloring number} $\chi(X)=k$ if there is a coloring of the vertices with $k$ colors such that no two vertices of the same color are joined by an edge and $k$ is minimal with this property.

Let $X$ be a graph with a partition of the vertices into subsets of size $k$. Then a \deph{strong} $k$\deph{-coloring} of $X$ is a coloring such that every color appears in each partition exactly once. (If the number of vertices is not divisible by $k$ we add isolated vertices as needed.) Then the \deph{strong coloring number} $s\chi(X)$ of $X$ is the least $k$ such that for all partitions of the vertices into subsets of size $k$, $X$ admits a strong $k$-coloring. (Confer also \cite{alon92}.)

\begin{lemma}\label{lem:PartitioningFiniteOrderedSets}
 There is a natural number $s\geq 3$ such that the following holds. Let $\sigma$ be a permutation of the numbers $[n]$, where $n$ is divisible by $s$. Then one can partition $(\sigma(1),\sigma(2),\ldots, \sigma(n))$ into $s$ vectors $v_i:=(a_{i,1},\ldots,a_{i,n/s})$ such that $|a_{i,j}-a_{i,k}|\neq 1$ for all $i=1\ldots s$ and $j,k=1\ldots n/s$, and $a_{i,j}=\sigma(k)$ implies $|sj-k|\leq s-1$.
\end{lemma}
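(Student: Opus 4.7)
The plan is to reformulate the conclusion as a strong coloring problem on a simple auxiliary graph. Build $X$ with vertex set $[n]$ and put an edge between $k_1$ and $k_2$ precisely when $|\sigma(k_1)-\sigma(k_2)|=1$. Each vertex of $X$ has at most two neighbors, namely the $\sigma$-preimages of $\sigma(k)\pm 1$ when these lie in the image of $\sigma$, so $X$ has maximum degree at most $2$ and is therefore a disjoint union of paths and cycles. By the main theorem of \cite{alon92}, the strong chromatic number of any graph is bounded above by a constant depending only on its maximum degree; applied to maximum degree $2$ this furnishes an absolute constant $s\geq 3$, independent of $n$ and $\sigma$, such that every such $X$ admits a strong $s$-coloring with respect to any partition of its vertex set into parts of size $s$.

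With this $s$ in hand, I would partition $[n]$ into the consecutive blocks $B_j:=\{s(j-1)+1,\ldots,sj\}$ for $j=1,\ldots,n/s$ (using $s\mid n$). A strong $s$-coloring of $X$ with respect to this partition is a proper coloring $c:[n]\to[s]$ whose restriction to each $B_j$ is a bijection onto $[s]$. For each pair $(i,j)\in[s]\times[n/s]$ let $k(i,j)$ be the unique vertex of $B_j$ with $c(k(i,j))=i$, and set $a_{i,j}:=\sigma(k(i,j))$. The vectors $v_i:=(a_{i,1},\ldots,a_{i,n/s})$ then partition $(\sigma(1),\ldots,\sigma(n))$ by construction, since the map $(i,j)\mapsto k(i,j)$ is a bijection $[s]\times[n/s]\to[n]$.

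Both requirements of the lemma are now immediate. For fixed $i$ and $j_1\neq j_2$ the vertices $k(i,j_1)$ and $k(i,j_2)$ share the color $i$, hence are non-adjacent in $X$, so by definition of the edge set $|a_{i,j_1}-a_{i,j_2}|\neq 1$. And $k(i,j)\in B_j$ forces $s(j-1)+1\leq k(i,j)\leq sj$, so $0\leq sj-k(i,j)\leq s-1$, giving $|sj-k(i,j)|\leq s-1$.

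The only real obstacle is producing the uniform bound on $s$, which is exactly the content of the strong coloring theorem of \cite{alon92}; everything else is bookkeeping around the bijection with the blocks. If one prefers to avoid appealing to the general bound, the special structure of $X$ as a disjoint union of paths and cycles is amenable to a direct, greedy-with-rerouting construction of the strong coloring, but at the price of a longer and less transparent argument, so I would simply cite the theorem.
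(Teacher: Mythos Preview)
Your proof is correct and is essentially the same argument as the paper's, up to a relabeling: the paper takes the \emph{values} $1,\ldots,n$ as vertices of $C_n$ and labels vertex $i$ by $\lfloor(\sigma^{-1}(i)+s-1)/s\rfloor$, whereas you take the \emph{positions} as vertices (obtaining $P_n$ via $\sigma$) and use the consecutive blocks $B_j$; under the bijection $\sigma$ these are the identical strong-coloring setup. The only substantive differences are that the paper adds the wraparound edge (working on $C_n$ rather than $P_n$, which is harmless since $s\chi(P_n)\le s\chi(C_n)$) and that the paper invokes the sharp bound $s\chi(C_n)=3$ of Fleischner--Stiebnitz rather than Alon's general constant, so it obtains an explicit $s=3$ where your appeal to \cite{alon92} yields only some absolute $s$.
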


\begin{proof}
 We reformulate the problem in graph theoretical terms. Consider the vector $(1,2,\ldots,n)$ as a graph, where $i,j$ are connected if $|i-j|\in \{1,n-1\}$, i.e.\ the cycle $C_n$. We assign a vertex $i$ of this graph the label $\lfloor \frac{\sigma^{-1}(i)+s-1}{s}\rfloor$. Thus we use $n/s$ different labels, each one occuring exactly $s$ times. If $s\geq s\chi(C_n)$, then there is a proper coloring of $C_n$ such that no two vertices with the same label have the same color. Now let $v_i$ be the vector of vertices of color $i$, in the ordering prescribed by the labels. Then it follows immediately that no two consecutive numbers appear in the same $v_i$. If $a_{i,j}=\sigma(k)$, then $j=\lfloor \frac{k+s-1}{s}\rfloor$ and the difference $|sj-k|$ is strictly less than $s$. 

Since it is known that the strong coloring number of $C_n$ can be bounded independently of $n$, the claim follows.
\end{proof}

Note that the constant $s\chi(C_n)$ in the previous lemma can be made explicit. Alon in \cite{alon92} mentions the bound of $s\chi(C_n)\leq 4$ (for $n$ divisible by $4$), credited to de la Vega, Fellows and himself. The usual proofs invoke probabilistic methods such as the Lov\'asz local lemma. Fleischner and Stiebnitz proved $s\chi(C_n)=3$ and there is an elementary proof, presented by Sachs in \cite{sachs93}.

\begin{lemma}\label{lem:BoundedGenerationOfSmallElementsInLieGroupsOfLargeRank}
 Let $G$ be a compact connected simple Lie group of rank $r>20k$ for a natural number $k$. Assume $g$ and $h$ are non-trivial elements in the maximal torus $T$ satisfying $F_g(ki+1)\leq mF_h(i+1)$ if $i\geq 0$, where $m\in\mathbb{N}$ is an even integer. Then
\[g\in (h^G\cup h^{-G})^{140km+4m}.\]
\end{lemma}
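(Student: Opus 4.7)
The plan is to adapt Theorem~\ref{thm:BoundedGenerationOfSmallElementsInLieGroupsOfSmallRank} to the unbounded-rank regime by exploiting the pointwise hypothesis $F_g(ki+1)\le mF_h(i+1)$ (strictly stronger than the integral comparison $\lt(g)\le m\lt(h)$) to replace the naive $r$-fold loop by a $k$-to-$1$ matching between $g$'s sorted positions and $h$'s sorted positions.

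First, I would conjugate so that $g,h$ lie at optimal positions in $T$, giving sorting permutations $\sigma,\tau$ with $F_g(i)=\tfrac12|1-\beta_{\sigma(i)}(g)|$ and $F_h(i)=\tfrac12|1-\beta_{\tau(i)}(h)|$; then $g=g_1\cdots g_r$ with $g_i\in H_i$ pairwise commuting, and similarly for $h$. Next, I would apply Lemma~\ref{lem:PartitioningFiniteOrderedSets} to the sequence $(\sigma(1),\ldots,\sigma(r))$ (padding $r$ harmlessly up to a multiple of $s\le 4$, which is one reason for the hypothesis $r>20k$) to obtain $s$ blocks $v_i=(a_{i,1},\ldots,a_{i,r/s})$ whose entries are pairwise non-adjacent in the Dynkin diagram, and whose $j$-th entry corresponds to a sorted position $p$ with $|sj-p|<s$.

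The key feature of each block is that non-adjacency of the $a_{i,j}$'s makes the $\SU(2)$-images $S_{a_{i,j}}$ commute pairwise. For each position $a_{i,j}\in v_i$, the relation $|sj-p|<s$ combined with the hypothesis yields an index $i_j'$ (with $ki_j'+1\le p$ and $i_j'+1\le\lceil sj/k\rceil$) such that
\[|1-\beta_{a_{i,j}}(g)|=2F_g(p)\le 2mF_h(i_j'+1)=m\,|1-\beta_{\tau(i_j'+1)}(h)|,\]
so Lemma~\ref{lem:BoundedGenerationOfTorusParts} applied with source $h_{\tau(i_j'+1)}$ gives $g_{a_{i,j}}\in(h^G\cup h^{-G})^{4m}$. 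Combining these factorisations over $j$ within the block, commutativity of the $S_{a_{i,j}}$'s lets the individual $\SU(2)$-conjugates interleave so that the total length is controlled by the number of distinct sorted $h$-components invoked, namely $\lceil r/(sk)\rceil$, rather than by $r/s$. A Weyl-placement argument (choosing Weyl conjugates of $h$ that simultaneously have the right sorted component at several positions of one block, as in the proof of Lemma~\ref{lem:ProductsOfOneDimensionalTori} and Lemma~\ref{lem:BoundedGenerationOfTorusParts}) then collapses the per-block count to $O(km)$. Summing across the $s\le 4$ blocks and adding the additive $4m$ to absorb a boundary residual handled by Theorem~\ref{thm:BoundedGenerationOfSmallElementsInLieGroupsOfSmallRank} on a rank-$O(1)$ leftover piece yields $140km+4m$.

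The main obstacle is producing a count that does not scale with $r$: a position-by-position application of Lemma~\ref{lem:BoundedGenerationOfTorusParts} gives $4mr$ conjugates, useless as $r\to\infty$. Two ingredients cooperate to remove the $r$: the non-adjacent partition realises the sorted positions as commuting $\SU(2)$-copies within each block, turning a sum of lengths into a controlled supremum; and the pointwise hypothesis ensures that each sorted $h$-component is charged with at most $k$ sorted $g$-positions, so only $O(k)$ essentially distinct components of $h$ contribute to each block. The precise constant $140$ packages the Sachs strong-colouring constant $s\le 4$ from Lemma~\ref{lem:PartitioningFiniteOrderedSets}, the factor $4m$ from Lemma~\ref{lem:BoundedGenerationOfTorusParts} and the multiplicative overhead for aligning Weyl-group placements; the additive $4m$ handles the small leftover piece. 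The most delicate bookkeeping is verifying that within a block the many invocations of Lemma~\ref{lem:BoundedGenerationOfTorusParts} really do share conjugates rather than accumulate them, which is where the commutativity of $S_{a_{i,j}}$'s and the structure of the cocharacters $\eta_\alpha$ are essential.
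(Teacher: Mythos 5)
Your high-level intuition is the right one (pointwise comparison of sorted components, commuting $\SU(2)$-copies attached to orthogonal roots, and a simultaneous application of Lemma~\ref{lem:BoundedGenerationOfTorusParts} to trade the factor $r$ for a factor $k$), but the proposal has a genuine gap exactly at the step that carries the proof. You partition the sorted $g$-positions into only $s\le 4$ pairwise-orthogonal blocks of size about $r/s$ and then assert that commutativity lets the conjugates ``interleave'' so that the count is $\lceil r/(sk)\rceil$, to be further collapsed to $O(km)$ by an unspecified ``Weyl-placement argument.'' The intermediate count $\lceil r/(sk)\rceil$ still grows with $r$, and the collapse to $O(km)$ is precisely the content that must be supplied. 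The mechanism that actually works is: one application of Lemma~\ref{lem:BoundedGenerationOfTorusParts}, performed simultaneously on a set of pairwise-orthogonal $g$-positions that is matched \emph{injectively} to a set of pairwise-orthogonal $h$-positions by a \emph{single} Weyl element (Lemma~5.21 of Nikolov--Segal sends one orthogonal tuple of fundamental roots to another), costs $4m$ conjugates of $h$ in total, not per position. To cover $[r]$ by such families one needs the $g$-positions in each family to be spread out in the \emph{sorted} order with step comparable to $k$, so that the $i$-th member of the family sits at sorted position about $Ki$ and can be matched with the $i$-th sorted position of $h$ via the hypothesis $F_g(ki+1)\le mF_h(i+1)$. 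The paper achieves this with arithmetic progressions $A_l=(l,K+l,\dots,NK+l)$, $K=5k$, in the sorted index, refined by the strong-colouring lemma into three orthogonal pieces each, giving $3K=15k$ families at cost $4m$ each; crucially the $h$-side is partitioned too ($\tau(A_0)$ into $B_1,B_2,B_3$). Your construction partitions only the $g$-side and produces blocks in which consecutive members occupy consecutive sorted positions, so an injective, length-compatible matching into $h$'s sorted positions is impossible without reusing each $h$-component about $k/s$ times, i.e.\ without redoing the application $\Theta(k)$ times per block --- which is the organization you have not provided.

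A secondary but real inaccuracy is the accounting of the constant: the leftover after the main covering is not a rank-$O(1)$ piece absorbed by the additive $4m$. It consists of up to $4K=20k$ indices treated one at a time (plus the possibly short/long root $\beta_r$), and contributes $80km+4m$ --- the larger half of $140km+4m$ --- while the main simultaneous families contribute $60km$. As written, your constant bookkeeping could not reproduce the stated bound even if the main step were repaired.
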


\begin{proof}
Considering the rank requirements, $G$ is a classical group of type ${\rm A}_r$, ${\rm B}_r$, ${\rm C}_r$ or ${\rm D}_r$. Then without loss of generality the roots $\beta_i$, $i=1\ldots r-1$, form a root system of type ${\rm A}_{r-1}$ and the root $\beta_r$ possibly has a different length. Roots $\beta_j$ and $\beta_i$ are orthogonal, whenever $|i-j|\geq 2$ and we will say that $i$ is orthogonal to $j$ in that case.

We can assume without loss of generality that $g$ and $h$ are optimal. Let $K:= 5k$. With $N$ the largest natural number divisible by $3$ such that $NK\leq r-K-1$, we define $N$-tuples $A_l:= (l,K+l,2K+l,\ldots, NK+l)$ for $l=1\ldots K$ and $A_0:= (1,2,\ldots, N)$. We choose the permutation $\sigma$ implicitly by writing $F_g(i)=\tfrac{1}{2}|1-\beta_{\sigma(i)}(g)|$ as above. Likewise we have $\tau$ corresponding to $h$. Both permutations act coordinatewise on $N$-tuples. If we choose $i\geq 0$, then
\[\tfrac{1}{2}|1-\beta_{\sigma(Ki+l)}(g)|=F_g(Ki+l)\leq mF_h(i+1)=\tfrac{1}{2}m|1-\beta_{\tau(i+1)}(h)|.\]
If $l\in\{1,\ldots,K\}$ we hence obtain
\[l(\beta_{\sigma(Ki+l)}(g))\leq 2ml(\beta_{\tau(i+1)}(h)).\]

Without loss of generality, we can assume the worst case that $\tau(A_0)$ contains $N$ consecutive numbers. Then by Lemma~\ref{lem:PartitioningFiniteOrderedSets} and the subsequent remarks there is a partition of $\tau(A_0)$ into tuples $B_1$, $B_2$ and $B_3$ with the same number of elements such that the entries in $B_i$ are pairwise orthogonal for $i=1,2,3$. In the same way we obtain $C_{i,l}$, $i=1,2,3$, from the sets $\sigma(A_l)$, $l=1\ldots K$. By Lemma~5.21 in \cite{nikolovsegal12} (here we use orthogonality) there are elements $w_{i,l}$ in the Weyl group of $G$ that map the vectors of fundamental roots corresponding to $B_i$ to the vectors of fundamental roots corresponding to $C_{i,l}$ for all $i=1,2,3$, $l=1\ldots k$. We will apply Lemma~\ref{lem:BoundedGenerationOfTorusParts} simultaneously to all $g_i$ for $i\in C_{j,l}$, but have to check first if we will end up with good enough constants. Lemma~\ref{lem:PartitioningFiniteOrderedSets} with $s=3$ guarantees that indices are at a distance of at most $2$ from their optimal position. In the worst case we have to compare $l(\beta_{\sigma(K(i-2)+l)}(g))$ with $l(\beta_{\tau(i+3)}(h))$. Since under this assumption $i\geq 3$,
\begin{align*}
 K(i-2)+l &\geq 5k(i-2)+1 = ki+1+4ki-10k\\
& \geq (ki+1)+12k -10k = k(i+2)+1.
\end{align*}
This implies 
\[l(\beta_{\sigma(K(i-2)+l)}(g))\leq l(\beta_{\sigma(k(i+2)+1)}(g))\leq 2ml(\beta_{\tau(i+3)}(h)).\]
 For other possibly dislocated indices this kind of estimate works as well. After the abovementioned application of Lemma~\ref{lem:BoundedGenerationOfTorusParts} we know 
\[\prod_{i\in C_{j,l}}g_i \in (h^G\cup h^{-G})^{2\cdot 2m}.\]
 When we reconstruct most of $g$ in this way, we arrive at
\[\prod_{i\in \bigcup C_{j,l}}g_i \in (h^G\cup h^{-G})^{4m\cdot 15k},\]
because we had to treat $3\cdot K=15 k$ sets $C_{j,l}$.
What remains are the indices left out in the above procedure. The number of these is $r-NK\leq 4K$ by the choice of $N$. If $i\leq r-1$, using $h_{\tau(1)}$, we can generate the $g_i$ separately in $2\cdot 2m$ steps as in Lemma~\ref{lem:BoundedGenerationOfTorusParts}. The last root $\beta_r$ possibly requires the second argument in the proof of Lemma~\ref{lem:BoundedGenerationOfTorusParts}, which results in adding $4\cdot 2m$. Hence generating the missing parts of $g$ can be done in $(4K-1)\cdot 4m+ 8m= 4(20k+1)m$ steps.

All in all we end up with
\[g\in (h^G\cup h^{-G})^{140km+4m}\]
as claimed.
\end{proof}

\begin{theorem}
 Let $\bs{g}$ and $\bs{h}$ be elements in the ultraproduct $\bs{G}$ of compact connected simple Lie groups of unbounded rank. Then $\bs{g}\preceq \bs{h}$ is equivalent to $\bs{g}\in N(\bs{h})$. 
\end{theorem}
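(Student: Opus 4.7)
The plan is to dispose of the two directions separately. The forward implication $\bs{g}\in N(\bs{h})\Rightarrow \bs{g}\preceq \bs{h}$ is already supplied by Proposition~\ref{prop:SingularValueOrderingAndLieGroupElementsOne}, whose proof records precisely that, when $\bs{g}$ is a bounded product of conjugates of $\bs{h}^{\pm 1}$, Lemma~\ref{lem:SingularValuesEstimate} delivers the shift-and-scale inequality $F_{g_n}(ki+1)\leq c\,F_{h_n}(i+1)$ defining $\preceq$ on $\bs{M}$. So the only content to be added concerns the converse.

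For the converse, my approach is to unwrap the definition of $F_{\bs{g}}\preceq F_{\bs{h}}$ in $\bs{M}/\equiv$: it gives an integer $k\geq 1$ and a constant $c>0$ such that for $\mathfrak{u}$-almost every $n$ and all $i\geq 0$,
\[F_{g_n}(ki+1)\leq c\,F_{h_n}(i+1).\]
I would then fix an even integer $m\geq c$ and, using that the ranks $r_n$ of $G_n$ tend to $\infty$ along $\mathfrak{u}$, restrict to the $\mathfrak{u}$-large set on which $r_n>20k$. Since $F_g$ is by construction constant on conjugacy classes, I may replace $g_n$ and $h_n$ by conjugates lying in a maximal torus of $G_n$ without altering either side of the equivalence we want to prove.

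At this point the hypotheses of Lemma~\ref{lem:BoundedGenerationOfSmallElementsInLieGroupsOfLargeRank} are met verbatim for $\mathfrak{u}$-almost all $n$, so that lemma furnishes
\[g_n\in (h_n^{G_n}\cup h_n^{-G_n})^{140km+4m}\]
on a set in $\mathfrak{u}$. Passing to the ultraproduct yields $\bs{g}\in (C(\bs{h})\cup C(\bs{h}^{-1}))^{140km+4m}\subseteq N(\bs{h})$, which is what was to be shown.

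In this sense the theorem is a short translation exercise: the genuinely hard step, namely producing a bounded-length word in conjugates of $h$ that hits $g$ whenever the two decreasing singular-profile functions $F_g$ and $F_h$ are comparable with a shift $k$, was already done inside Lemma~\ref{lem:BoundedGenerationOfSmallElementsInLieGroupsOfLargeRank} via the strong $3$-coloring of $C_n$ from Lemma~\ref{lem:PartitioningFiniteOrderedSets} together with the orthogonality input of Lemma~5.21 of \cite{nikolovsegal12}. There is no new obstacle at the level of this theorem; the parameters $(k,m)$ supplied by $\bs{g}\preceq \bs{h}$ plug directly into that lemma.
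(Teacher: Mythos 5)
Your proposal is correct and follows exactly the route the paper takes: the forward direction is delegated to Proposition~\ref{prop:SingularValueOrderingAndLieGroupElementsOne}, and the converse is obtained by conjugating into a maximal torus and applying Lemma~\ref{lem:BoundedGenerationOfSmallElementsInLieGroupsOfLargeRank} on the $\mathfrak{u}$-large set where the rank exceeds $20k$, then passing to the ultraproduct. Your write-up merely makes explicit the parameter bookkeeping that the paper's one-line proof leaves implicit.
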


\begin{proof}\label{thm:SingularValueOrderingAndLieGroupElementsTwo}
The first implication was already proved in Proposition~\ref{prop:SingularValueOrderingAndLieGroupElementsOne}. The proof of the second is an application of Lemma~\ref{lem:BoundedGenerationOfSmallElementsInLieGroupsOfLargeRank}, analogous to the proofs of Theorem~\ref{thm:NormalSubgroupsOfUltraproductsOfFiniteSimpleGroups} or Theorem~\ref{thm:NormalSubgroupsOfUltraproductsOfLieGroups}.
\end{proof}

Up to now it is clear that the set of normal closures of elements in $\bs{G}$ is order isomorphic to $\bs{M}/\equiv$. What remains to be clarified is the influence of this ordering on the ordering of normal subgroups. 

\subsection{The lattice of normal subgroups}
We are interested in the lattice of normal subgroups of groups $G$. The lattice operations are $N\wedge M=N\cap M$ and $N\vee M=NM$, the normal subgroup generated by $N$ and $M$, for any choice of normal subgroups in $G$. It is well known that the lattice of normal subgroups of any group is modular, that is for normal subgroups $L,M,N$ the modular law
\[((L\wedge N)\vee M)\wedge N=(L\wedge N)\vee(M\wedge N)\]
holds.

\begin{lemma}\label{lem:MeetAndJoinOfNormalClosures}
 Let $\bs{G}$ be an ultraproduct of compact connected simple Lie groups and $\bs{g}$, $\bs{h}$ in a maximal torus $\bs{T}$ of $\bs{G}$. Then there are $\bs{a}, \bs{b}\in \bs{T}$ such that $N(\bs{g})\wedge N(\bs{h})=N(\bs{a})$ and $N(\bs{g})\vee N(\bs{h})=N(\bs{b})$.
\end{lemma}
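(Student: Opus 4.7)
The plan is to use the order isomorphism $\bs g \mapsto N(\bs g)$ from Theorem~\ref{thm:SingularValueOrderingAndLieGroupElementsTwo} between principal normal closures in $\bs G$ and equivalence classes in $\bs M/\!\equiv$, and to realise the pointwise minimum and maximum of $F_{\bs g}$ and $F_{\bs h}$ by explicit elements of $\bs T$. If the ranks $m_n$ of $G_n$ are bounded along $\mathfrak u$, Theorem~\ref{thm:NormalSubgroupsOfUltraproductsOfLieGroups} already makes $\mathfrak N$ totally ordered and one can take $\bs a,\bs b\in\{\bs g,\bs h\}$. So I assume $m_n\to_{\mathfrak u}\infty$ and set $M_n(i):=\max(F_{g_n}(i),F_{h_n}(i))$ and $m_n(i):=\min(F_{g_n}(i),F_{h_n}(i))$, both nonincreasing elements of $\mathscr F_{m_n}(\mathbb N,[0,1])$.

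The first step is to realise prescribed spectra in $T_n$. After lifting to the simply connected cover of $G_n$ — an operation that preserves all the length functions used, since they factor through $G_n/Z(G_n)$ — the map $T_n\to(S^1)^{m_n}$ given by $t\mapsto(\beta_1(t),\dots,\beta_{m_n}(t))$ is surjective because the Cartan matrix has positive determinant. Hence I can choose $a_n,b_n\in T_n$ together with a Weyl permutation $\sigma$ of fundamental roots so that $\tfrac12|1-\beta_{\sigma(i)}(a_n)| = m_n(i)$ and $\tfrac12|1-\beta_{\sigma(i)}(b_n)| = M_n(i)$; in particular $F_{a_n}=m_n$ and $F_{b_n}=M_n$.

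For the meet, $F_{\bs a}\preceq F_{\bs g}$ and $F_{\bs a}\preceq F_{\bs h}$ give $\bs a\in N(\bs g)\cap N(\bs h)$ by Theorem~\ref{thm:SingularValueOrderingAndLieGroupElementsTwo}; conversely any non-trivial $\bs c\in N(\bs g)\cap N(\bs h)$ obeys $F_{\bs c}\preceq\min(F_{\bs g},F_{\bs h})=F_{\bs a}$, so $\bs c\in N(\bs a)$. Thus $N(\bs a)=N(\bs g)\wedge N(\bs h)$. For the join, $F_{\bs b}\succeq F_{\bs g},F_{\bs h}$ yields $\bs g,\bs h\in N(\bs b)$ and $N(\bs g)\vee N(\bs h)\subset N(\bs b)$. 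The reverse inclusion needs $\bs b\in N(\bs g)\vee N(\bs h)$, which I would obtain by splitting the coordinates $I_g:=\{i:F_{g_n}(i)\ge F_{h_n}(i)\}$, $I_h:=[m_n]\setminus I_g$, and factoring $b_n=b_n^{(g)}b_n^{(h)}$ in the abelian torus $T_n$ with $\beta_{\sigma(i)}(b_n^{(g)}):=\beta_{\sigma(i)}(b_n)$ for $i\in I_g$ and $1$ otherwise (and $b_n^{(h)}$ dually). By construction $F_{b_n^{(g)}}\le F_{g_n}$ and $F_{b_n^{(h)}}\le F_{h_n}$ pointwise, and Lemma~\ref{lem:BoundedGenerationOfSmallElementsInLieGroupsOfLargeRank} expresses each as a product of a bounded number of conjugates of $g_n^{\pm 1}$, respectively $h_n^{\pm 1}$. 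Thus $b_n\in\langle g_n^{G_n},h_n^{G_n}\rangle$ for $\mathfrak u$-almost all $n$ and $\bs b\in N(\bs g)\vee N(\bs h)$, so $N(\bs b)=N(\bs g)\vee N(\bs h)$.

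The main obstacle is the join construction: having $F_{\bs b}\equiv M$ alone does not place $\bs b$ in $N(\bs g)\vee N(\bs h)$, and it is only the torus decomposition $b_n = b_n^{(g)}b_n^{(h)}$ together with the bounded-generation tool of Lemma~\ref{lem:BoundedGenerationOfSmallElementsInLieGroupsOfLargeRank} that supplies the missing inclusion; both of these become available thanks to the unbounded-rank hypothesis.
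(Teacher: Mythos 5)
Your construction of $\bs a$ and $\bs b$ --- realising $\min(F_{\bs g},F_{\bs h})$ and $\max(F_{\bs g},F_{\bs h})$ as $F$-functions of elements of the maximal torus --- is exactly the paper's proof; indeed the paper's argument consists of essentially nothing more than this construction, ending with ``this yields $\bs a$ and $\bs b$ as claimed.'' Where you go further is in verifying that these elements actually compute the lattice operations, and your worry about the join is legitimate: the order isomorphism $\mathfrak N_0\cong\bs M/\equiv$ only identifies $N(\bs b)$ as the least upper bound of $N(\bs g)$ and $N(\bs h)$ \emph{among normal closures}, whereas the lemma needs $N(\bs b)=N(\bs g)N(\bs h)$, and the inclusion $N(\bs b)\subseteq N(\bs g)N(\bs h)$ is not a formal consequence of the ordering. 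Your torus factorisation $b_n=b_n^{(g)}b_n^{(h)}$ with $F_{\bs b^{(g)}}\preceq F_{\bs g}$ and $F_{\bs b^{(h)}}\preceq F_{\bs h}$, fed into Lemma~\ref{lem:BoundedGenerationOfSmallElementsInLieGroupsOfLargeRank}, is the natural way to supply the missing inclusion, and your elementwise argument for the meet (using that $\preceq$ both factors implies $\preceq$ their pointwise minimum, by taking the maximum of the constants $c,k$) is likewise complete; the bounded-rank reduction via Theorem~\ref{thm:NormalSubgroupsOfUltraproductsOfLieGroups} is also sensible, since the equivalence $\bs g\preceq\bs h\Leftrightarrow\bs g\in N(\bs h)$ is only proved for unbounded rank. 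One caveat you share with the paper: $F_x$ is defined via an \emph{optimal} representative of $C(x)\cap T$, i.e.\ by a lexicographic maximisation over the Weyl orbit, so both the paper's claim $F_{b_n}=B_n$ and your claim $F_{b_n^{(g)}}\le F_{g_n}$ implicitly require that the prescribed torus elements can be chosen optimal (equivalently, that passing to an optimal conjugate cannot push the decreasing rearrangement above the prescribed values); neither proof spells this out, and it deserves a sentence.
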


\begin{proof}
We define functions $\bs{A}:=\min(F_{\bs{g}}, F_{\bs{h}})$ and $\bs{B}:=\max(F_{\bs{g}}, F_{\bs{h}})$. The plan is to show that there are actually elements $\bs{a}$ and $\bs{b}$ such that $\bs{A}=F_{\bs{a}}$ and $\bs{B}=F_{\bs{b}}$. For some $n$ consider the functions $A_n:=\min(F_{g_n},F_{h_n})$, $B_n:=\min(k\mapsto 1,\max(F_{g_n},F_{h_n}))$. Let $T_n$ be a maximal torus in the group $G_n$ of rank $r$, where we can assume $g_n,h_n\in T_n$.
Because $T_n$ is isomorphic to  $(S^1)^r$ we find elements $a_n$ and $b_n$ in $T_n$ such that $F_{a_n}=A_n$ and $F_{b_n}=B_n$. This yields $\bs{a}$ and $\bs{b}$ as claimed.
\end{proof}

\begin{proposition}\label{prop:DistributiveLatticeOfNormalClosures}
 Let $\bs{G}$ be an ultraproduct of compact connected simple Lie groups. Then the set $\mathfrak{N}_0$ of normal closures of elements in $\bs{G}\setminus\{1\}$ is a distributive lattice.
\end{proposition}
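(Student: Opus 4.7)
The plan is to reduce distributivity of $\mathfrak{N}_0$ to the pointwise distributivity of $\min$ and $\max$ on $[0,1]$-valued functions, using the order correspondence between $\mathfrak{N}_0$ and $\bs{M}/{\equiv}$ (Theorem~\ref{thm:SingularValueOrderingAndLieGroupElementsTwo}) together with Lemma~\ref{lem:MeetAndJoinOfNormalClosures}.

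First I would fix maximal tori $T_n \subset G_n$ and set $\bs{T} := \prod_{\mathfrak{u}} T_n$. In each $G_n$ every element is conjugate into $T_n$, so every element of $\bs{G}$ is conjugate into $\bs{T}$. Since $N(\bs{g})$ depends only on the conjugacy class of $\bs{g}$, any three elements of $\mathfrak{N}_0$ can be represented in the form $N(\bs{g})$, $N(\bs{h})$, $N(\bs{k})$ for torus elements $\bs{g}, \bs{h}, \bs{k} \in \bs{T}$.

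Next I would apply Lemma~\ref{lem:MeetAndJoinOfNormalClosures} iteratively. The lemma guarantees that the meet and join of two normal closures of torus elements are again normal closures of torus elements, whose associated $F$-functions are the pointwise $\min$ and $\max$ of the originals. Iterating this on both sides of the distributive law yields torus elements $\bs{a}, \bs{b} \in \bs{T}$ with
\[N(\bs{g}) \wedge (N(\bs{h}) \vee N(\bs{k})) = N(\bs{a}), \qquad (N(\bs{g}) \wedge N(\bs{h})) \vee (N(\bs{g}) \wedge N(\bs{k})) = N(\bs{b}),\]
and
\[F_{\bs{a}} = \min(F_{\bs{g}}, \max(F_{\bs{h}}, F_{\bs{k}})), \qquad F_{\bs{b}} = \max(\min(F_{\bs{g}}, F_{\bs{h}}), \min(F_{\bs{g}}, F_{\bs{k}})).\]

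Since the identity $\min(x, \max(y,z)) = \max(\min(x,y), \min(x,z))$ holds pointwise on $[0,1]$, we have $F_{\bs{a}} = F_{\bs{b}}$ as functions, so in particular $F_{\bs{a}} \equiv F_{\bs{b}}$. By Theorem~\ref{thm:SingularValueOrderingAndLieGroupElementsTwo} this forces $\bs{a} \in N(\bs{b})$ and $\bs{b} \in N(\bs{a})$, i.e.\ $N(\bs{a}) = N(\bs{b})$, establishing the distributive law on $\mathfrak{N}_0$. I do not anticipate a significant obstacle; the genuine analytic content has already been absorbed into Lemma~\ref{lem:MeetAndJoinOfNormalClosures}, and the remaining work is the bookkeeping of iterating that lemma so that every intermediate meet and join is again witnessed by an element of $\bs{T}$, which the inductive application of the lemma guarantees.
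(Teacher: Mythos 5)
Your proposal is correct and follows essentially the same route as the paper: both rest on the order correspondence between $\mathfrak{N}_0$ and $\bs{M}/\equiv$, on Lemma~\ref{lem:MeetAndJoinOfNormalClosures} to realize meets and joins of normal closures as normal closures of torus elements with $F$-functions given by pointwise $\min$ and $\max$, and on the pointwise distributivity of $\min$ and $\max$. Your version merely spells out the transfer by exhibiting explicit witnesses $\bs{a},\bs{b}$ for the two sides of the distributive identity, which is a harmless reformulation of the paper's appeal to the order isomorphism.
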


\begin{proof}
We already know that $\mathfrak{N}_0$ is order isomorphic to $\bs{M}/\equiv$. It is clear that the latter is a distributive lattice with meet and join induced by the operations $\min$ and $\max$ applied to functions. Lemma~\ref{lem:MeetAndJoinOfNormalClosures} shows that the corresponding operations in $\mathfrak{N}_0$ produce normal closures again.
\end{proof}

\begin{lemma}\label{lem:DistributiveLatticeOfNormalSubgroups}
Let $G$ be a group. If the set of normal closures of elements in $G$ is a distributive lattice, then the lattice of normal subgroups is distributive, too.
\end{lemma}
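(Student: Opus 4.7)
The plan is to reduce the general distributive law for the lattice of normal subgroups of $G$ to the hypothesised distributivity within the sublattice of normal closures of single elements. Since the inclusion $(A \cap B) \vee (A \cap C) \subseteq A \cap (B \vee C)$ holds automatically in every lattice, it suffices to fix arbitrary normal subgroups $A, B, C$ of $G$ together with an arbitrary element $x \in A \cap (B \vee C)$, and to argue that $x \in (A \cap B) \vee (A \cap C)$.

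The decisive observation is that for normal subgroups one has $B \vee C = BC$, so $x$ admits an explicit two-factor decomposition $x = bc$ with $b \in B$ and $c \in C$. Then $b \in N(b)$ and $c \in N(c)$ together give $x \in N(b) N(c) \subseteq N(b) \vee N(c)$, and by minimality of the normal closure this upgrades to $N(x) \subseteq N(b) \vee N(c)$. I would then invoke the hypothesis on the triple $N(x)$, $N(b)$, $N(c)$ to get
\[
N(x) \;=\; N(x) \cap \bigl(N(b) \vee N(c)\bigr) \;=\; \bigl(N(x) \cap N(b)\bigr) \vee \bigl(N(x) \cap N(c)\bigr).
\]

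The normality of $A$, $B$, $C$ combined with $x \in A$, $b \in B$, $c \in C$ forces $N(x) \subseteq A$, $N(b) \subseteq B$ and $N(c) \subseteq C$, so each intersection on the right is sandwiched: $N(x) \cap N(b) \subseteq A \cap B$ and $N(x) \cap N(c) \subseteq A \cap C$. Therefore $x \in N(x) \subseteq (A \cap B) \vee (A \cap C)$, finishing the proof. I do not expect any genuine obstacle here: the whole argument hinges on the two-summand decomposition $x = bc$, which replaces the information that $x$ lies in the possibly complicated join $B \vee C$ by the much more concrete datum of two generators, after which a single application of the three-element distributive identity supplied by the hypothesis closes the argument.
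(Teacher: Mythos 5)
Your proposal is correct and follows essentially the same route as the paper: write $x=bc$ using that the join of normal subgroups is their product, apply the distributive identity to the triple of normal closures $N(x),N(b),N(c)$, and sandwich the resulting intersections inside $A\cap B$ and $A\cap C$. The only differences are notational.
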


\begin{proof}
Let $L,M,N$ be any normal subgroups in $G$. We have to show that
\[(L\vee M) \wedge N= (L\wedge N)\vee (M\wedge N)\]
holds. Here the inclusion of the right hand side in the left hand side is true in general. Moreover  by assumption the whole equation holds for normal closures of elements in $G$.
Consider $x\in (L\vee M)\wedge N$. Then $x\in L\vee M$ and $x\in N$ because the meet operation is intersection of sets. Because the normal closure of $L$ and $M$ is the normal subgroup $LM$, there are $a \in L$ and $b\in  M$ such that $x$ equals the product $ab$. This means that $x\in N(a)\vee N(b)$.
 We also observe $N(x)\subset N$ to obtain
\begin{align*}
 x &\in (N(a)\vee N(b))\wedge N(x)\\
& =(N(a)\wedge  N(x))\vee (N(b)\wedge N(x))\\
&\subset (L\wedge N)\vee (M\wedge N).
\end{align*}
Thus the claim follows.
\end{proof}

The observations made in Proposition~\ref{prop:DistributiveLatticeOfNormalClosures} and Lemma~\ref{lem:MeetAndJoinOfNormalClosures} suffice to prove the following result.

\begin{theorem}\label{thm:DistributiveLatticeOfNormalSubgroupsOfUltraproducts}
 If $\bs{G}$ is an ultraproduct of compact connected simple Lie groups, then the lattice of normal subgroups of $\bs{G}$ is distributive.
\end{theorem}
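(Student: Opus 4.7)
The plan is to observe that this theorem is essentially a formal consequence of the two immediately preceding results, Proposition~\ref{prop:DistributiveLatticeOfNormalClosures} and Lemma~\ref{lem:DistributiveLatticeOfNormalSubgroups}, together with Lemma~\ref{lem:MeetAndJoinOfNormalClosures}. Concretely, I would like to apply Lemma~\ref{lem:DistributiveLatticeOfNormalSubgroups} directly to $G=\bs{G}$, using Proposition~\ref{prop:DistributiveLatticeOfNormalClosures} to supply its hypothesis. The only subtle point is that the hypothesis of Lemma~\ref{lem:DistributiveLatticeOfNormalSubgroups} requires not just that $\mathfrak{N}_0$ is a distributive poset, but that it is a sublattice of the full lattice of normal subgroups, so that the join $N(a)\vee N(b)$ computed inside $\mathfrak{N}_0$ agrees with the usual normal-subgroup join. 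This is precisely what Lemma~\ref{lem:MeetAndJoinOfNormalClosures} guarantees.

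So the steps would be as follows. First I would pick normal subgroups $L,M,N\trianglelefteq \bs{G}$ and aim to show $(L\vee M)\wedge N\subset (L\wedge N)\vee(M\wedge N)$, since the reverse inclusion is automatic by modularity. Given $x\in (L\vee M)\wedge N$, write $x=ab$ with $a\in L$ and $b\in M$, so that $N(a)\subset L$, $N(b)\subset M$, and $N(x)\subset N$. Next, by conjugating if necessary I may assume $a$ and $b$ (or appropriate elementwise representatives) lie in a common maximal torus $\bs{T}$ of $\bs{G}$; Lemma~\ref{lem:MeetAndJoinOfNormalClosures} then furnishes an element $\bs{c}\in\bs{T}$ with $N(\bs{c})=N(a)\vee N(b)$, so in particular $x\in N(\bs{c})\in\mathfrak{N}_0$. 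Combining this with $N(x)\in\mathfrak{N}_0$ and the distributivity of $\mathfrak{N}_0$ proved in Proposition~\ref{prop:DistributiveLatticeOfNormalClosures}, I obtain
\[
x\in N(x)\wedge\bigl(N(a)\vee N(b)\bigr)=\bigl(N(x)\wedge N(a)\bigr)\vee\bigl(N(x)\wedge N(b)\bigr)\subset (L\wedge N)\vee(M\wedge N),
\]
which is the required inclusion. This is of course just the proof of Lemma~\ref{lem:DistributiveLatticeOfNormalSubgroups} repeated in the present context, so the cleanest presentation is simply to cite that lemma together with Proposition~\ref{prop:DistributiveLatticeOfNormalClosures}.

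The main (and essentially the only) obstacle is the need to verify that the sublattice $\mathfrak{N}_0$ really is closed under the ambient lattice operations on normal subgroups, because distributivity of an abstract ordered set $\mathfrak{N}_0$ only transfers to the larger lattice if its joins and meets agree with those in the larger lattice. For the meet this is trivial (intersection of normal closures need not be a normal closure, but Lemma~\ref{lem:MeetAndJoinOfNormalClosures} asserts that it is, once representatives lie in a common torus), and for the join one uses that $N(a)N(b)$ is already contained in $N(ab)\cdot N(b\cdot \textrm{conjugates})$, so that the join in $\mathfrak{N}_0$ furnished by Lemma~\ref{lem:MeetAndJoinOfNormalClosures} really is the normal-subgroup join. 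Once this compatibility is clear, the theorem follows immediately.
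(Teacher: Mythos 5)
Your proposal is correct and is exactly the paper's argument: the theorem is obtained by feeding Proposition~\ref{prop:DistributiveLatticeOfNormalClosures} (distributivity of $\mathfrak{N}_0$) and Lemma~\ref{lem:MeetAndJoinOfNormalClosures} (which guarantees that the meets and joins of normal closures, computed in the ambient lattice of normal subgroups, are again normal closures) into the transfer Lemma~\ref{lem:DistributiveLatticeOfNormalSubgroups}. The compatibility issue you flag is real and is precisely what Lemma~\ref{lem:MeetAndJoinOfNormalClosures} is there to settle, so nothing further is needed.
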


\section{Conclusion}
We considered ultraproducts of finite simple groups and compact connected simple Lie groups. As a consequence of the Peter-Weyl Theorem, any compact simple group belongs to one of the two categories. We have to deal with the subcases of groups of bounded and unbounded rank, respectively, because the two behave differently as shown above. If we have an ultraproduct $\bs{G}$ of compact simple groups the ultrafilter selects one kind of groups among the four listed possibilities, which determine the properties of $\bs{G}$. We will say that $\bs{G}$ is of \deph{bounded finite type}, \deph{unbounded finite type}, \deph{bounded Lie type} or \deph{unbounded Lie type} if $\bs{G}$ is essentially an ultraproduct of finite simple groups of bounded or unbounded rank or Lie groups of bounded or unbounded rank, respectively.

Recall the situation in the case of finite simple groups. We defined $\bs{g}\preceq \bs{h}$ if 
\[\lim_{\mathfrak{u}}\frac{\ell(g_n)}{\ell(h_n)}<\infty,\]
where $\ell$ was one of the length functions $\lr$ and $\lj$.
For $g\neq 1$ in a finite simple group of rank $n$ define
\[F_g(k):= \left\{\begin{array}{ l l}
            0 & \text{otherwise},\\
	    1 & \text{if } k\leq n\ell(g).
           \end{array}\right.
\]
Then it is an elementary observation that $F_{\bs{g}}\preceq F_{\bs{h}}$, if and only if $\bs{g}\preceq \bs{h}$ for non-trivial $\bs{g},\bs{h}\in \bs{G}$.
Using this last remark we can summarize our results in the following theorem.

\begin{theorem}[Main Theorem]\label{thm:MainTheorem}
Let $\bs{G}$ be an ultraproduct of non-abelian compact simple  groups $G_n$. Let $\bs{M}$ be the ultraproduct of sequences of decreasing functions $F_n:\mathbb{N}\rightarrow [0,1]$ with support of size less or equal to the rank of $G_n$. Define $\bs{F}\preceq \bs{H}$ if there are constants $c,k$ such that $F_n(ki+1)\leq cH_n(i+1)$  for all $i\geq 0$ $\mathfrak{u}$-almost everywhere, and $\bs{F}\equiv \bs{H}$ if $\bs{F}\preceq \bs{H}$ as well as $\bs{H}\preceq \bs{F}$. 
\begin{enumerate}
 \item If $\bs{G}$ is of unbounded Lie type, then the set of normal closures $\mathfrak{N}_0$ of elements in $\bs{G}\setminus\{1\}$ is a lattice isomorphic to the distributive lattice $\bs{M}/\equiv$. The lattice $\mathfrak{N}$ of normal subgroups of $\bs{G}$ is distributive. 
\item If $\bs{G}$ is of bounded Lie type, then $\mathfrak{N}_0$ is isomorphic to the linearly ordered sublattice of $\bs{M}/\equiv$ induced by the functions of bounded support and $\mathfrak{N}$ is linearly ordered.
\item If $\bs{G}$ is of unbounded finite type, then $\mathfrak{N}_0$ is isomorphic to the linearly ordered sublattice of $\bs{M}/\equiv$ induced by the functions $F:\mathbb{N}\rightarrow \{0,1\}$. Again, $\mathfrak{N}$ is linearly ordered.
\item If $\bs{G}$ is of bounded finite type, then $\bs{G}$ is simple and $\mathfrak{N}$ is isomorphic to the lattice $2$.
\end{enumerate}
\end{theorem}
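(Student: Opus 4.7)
The plan is to treat the four cases one by one, in each reducing to a result already proved in the paper and then verifying the identification of $\mathfrak{N}_{0}$ with the appropriate sub-poset of $\bs{M}/{\equiv}$.

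I would dispose of the bounded finite type case (4) first. If $\bs{G}$ arises from alternating or sporadic groups of bounded rank, only finitely many isomorphism types appear, and by ultrafilter pigeonhole one obtains $\bs{G}\cong G_{n_{0}}$ for some $n_{0}$, which is simple; if the $G_n$ are of Lie type of bounded rank, Proposition~\ref{prop:BoundedRankUltraproductIsSimple} delivers simplicity directly. Either way $\mathfrak{N}=\{\{1\},\bs{G}\}$ is the two-element lattice $2$.

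For the unbounded finite type case (3), the linear ordering of $\mathfrak{N}$ is Theorem~\ref{thm:NormalSubgroupsOfUltraproductsOfFiniteSimpleGroups}. To identify $\mathfrak{N}_{0}$ with the sublattice of $\{0,1\}$-valued sequences in $\bs{M}/{\equiv}$ I would use the step functions $F_g$ (with $F_g(k)=1$ iff $k\le n\ell(g)$) introduced just above the theorem, and verify for non-identity $\bs{g},\bs{h}\in\bs{G}$ that $F_{\bs{g}}\preceq F_{\bs{h}}$ in $\bs{M}$ is equivalent to $\limsup_{\mathfrak{u}}\ell(g_n)/\ell(h_n)<\infty$: unpacking, $F_n(Ki+1)\le cH_n(i+1)$ on step functions with values in $\{0,1\}$ amounts to $n\ell(g_n)\le Kn\ell(h_n)+O(1)$. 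Combined with Corollary~\ref{cor:QuasiOrderOfGroupElements}, this produces the order isomorphism $N(\bs{g})\leftrightarrow[F_{\bs{g}}]$; distributivity is automatic on a totally ordered set.

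For the two Lie type cases, the key inputs are already in place: Theorem~\ref{thm:NormalSubgroupsOfUltraproductsOfLieGroups} gives the linear ordering of $\mathfrak{N}$ in the bounded case (2), while Proposition~\ref{prop:DistributiveLatticeOfNormalClosures} and Theorem~\ref{thm:DistributiveLatticeOfNormalSubgroupsOfUltraproducts} give distributivity of $\mathfrak{N}_{0}$ and of $\mathfrak{N}$ in the unbounded case (1). The isomorphism of $\mathfrak{N}_{0}$ with (the appropriate sublattice of) $\bs{M}/{\equiv}$ combines Theorem~\ref{thm:SingularValueOrderingAndLieGroupElementsTwo} (well-definedness and injectivity, via $\bs{g}\preceq\bs{h}\Leftrightarrow\bs{g}\in N(\bs{h})$), Lemma~\ref{lem:MeetAndJoinOfNormalClosures} (compatibility with meet and join), and a surjectivity argument via the torus parametrization $T_n\cong(S^1)^{r_n}$, which allows one to realize any prescribed decreasing $F_n$ of support contained in $[r_n]$ as $F_{g_n}$ by a suitable choice of eigenvalues of $g_n$. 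The main obstacle I anticipate is this surjectivity step in cases~(1) and~(2): one must be sure that any element of $\bs{M}$ meeting the relevant support constraint can be realized on the nose and not merely up to $\equiv$. This reduces to observing that within a single maximal torus the angles $|1-\beta_i(t)|$ can be prescribed independently, after which the rearrangement defining $F_g$ returns exactly the chosen profile; no serious additional work beyond the existing lemmas should be required.
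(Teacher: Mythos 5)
Your plan is correct and matches the paper's own treatment: the paper proves the Main Theorem precisely by bundling Proposition~\ref{prop:BoundedRankUltraproductIsSimple}, Theorem~\ref{thm:NormalSubgroupsOfUltraproductsOfFiniteSimpleGroups} with the step functions $F_g$, Theorem~\ref{thm:NormalSubgroupsOfUltraproductsOfLieGroups}, and Proposition~\ref{prop:DistributiveLatticeOfNormalClosures} together with Theorem~\ref{thm:DistributiveLatticeOfNormalSubgroupsOfUltraproducts}, exactly as you propose. The surjectivity step you flag is handled in the paper the same way, via the torus parametrization $T_n\cong (S^1)^{r_n}$ in the proofs of Theorem~\ref{thm:NormalSubgroupsOfUltraproductsOfLieGroups} and Lemma~\ref{lem:MeetAndJoinOfNormalClosures}.
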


\begin{center} 
\large{Acknowledgements}
\end{center}
The authors were supported by ERC-Starting Grant No. 277728 "Geometry and Analysis of Group Rings". They are indebted to the anonymous referee for helpful comments and to Nikolay Nikolov for a considerably shorter proof of Lemma~\ref{lem:LengthFunctionComparisonNumberThreeInSn}. We want to thank Philip Dowerk for helpful discussions and finding many small mistakes. The second author wants to thank Simon Thomas and Nikolay Nikolov for interesting and inspiring discussions.

\bibliographystyle{plain}
\bibliography{usg}{}

\small

\vspace{0.5cm}

\textit{Abel Stolz, Universit\"at Leipzig, Augustusplatz 10, 04109 Leipzig, Germany \\}
\verb|abel.stolz@math.uni-leipzig.de|\\

\textit{Andreas Thom, Universit\"at Leipzig, Augustusplatz 10, 04109 Leipzig, Germany  \\}
\verb|andreas.thom@math.uni-leipzig.de|\\

\end{document}